\theoremstyle{plain}
\newtheorem{thm}{Theorem}[section]
\newtheorem{prop}[thm]{Proposition}
\newtheorem{lem}[thm]{Lemma}
\newtheorem{cor}[thm]{Corollary}
\theoremstyle{definition}
\newtheorem{defn}[thm]{Definition}
\numberwithin{table}{section}
\theoremstyle{remark}
\newcommand{\Hom}{\ensuremath{\mathrm{Hom}} }
\newcommand{\Aut}{\ensuremath{\mathrm{Aut}} }
\newcommand{\Inn}{\ensuremath{\mathrm{Inn}} }
\newcommand{\Out}{\ensuremath{\mathrm{Out}} }
\newcommand{\sA}{\ensuremath{\mathcal{A}} }
\newcommand{\sC}{\ensuremath{\mathcal{C}} }
\newcommand{\F}{\ensuremath{\mathcal{F}} }
\newcommand{\E}{\ensuremath{\mathcal{E}} }
\def\C{\ensuremath{\mathrm{C}} }
\def\D{\ensuremath{\mathrm{D}} }
\def\H{\ensuremath{\mathrm{H}} }
\def\N{\ensuremath{\mathrm{N}} }
\def\Z{\ensuremath{\mathrm{Z}} }
\newcommand{\normal}{\ensuremath{\unlhd}}
\newcommand{\gen}[1]{\langle #1\rangle}
\newcommand{\PSL}{\mathrm{PSL}}
\newcommand{\PSU}{\mathrm{PSU}}
\newcommand{\SL}{\mathrm{SL}}
\newcommand{\GL}{\mathrm{GL}}
\newcommand{\PGL}{\mathrm{PGL}}
\renewcommand{\O}{\mathrm{O}}
\newcommand{\Suz}{\mathrm{Suz}}
\newcommand{\mc}[1]{\mathcal{#1}}
\newcommand{\Modul}{\mathrm{Mod}}
\newcommand{\GA}{X}
\newcommand{\GB}{Y}
\newcommand{\rk}{\mathrm{rk}}
\newcommand*\circled[1]{%
  \tikz[baseline=(C.base)]\node[draw,circle,inner sep=0.5pt](C) {$#1$};\!
}
\newcommand{\darud}[1]{\ar @{--}@/^1pc/[#1]\ar @{--}@/_1pc/[#1]}
\newcommand{\lbar}[1]{\overline{#1}}
\begin{document}

\bibliographystyle{plain}

\title{Fusion systems on small $p$-groups}
\author{David A.\ Craven}
\author{Adam Glesser}
\thanks{2000 \textit{Mathematics Subject Classification}. Primary 20D20; Secondary 20D45.}
\date{\today}
\begin{abstract}
In this article we study several classes of `small' $2$-groups: we complete the classification, started in \cite{stancu2006}, of all saturated fusion systems on metacyclic $p$-groups for all primes $p$. We consider Suzuki $2$-groups, and classify all center-free saturated fusion systems on $2$-groups of $2$-rank $2$. We end by classifying all possible $\F$-centric, $\F$-radical subgroups in saturated fusion systems on $2$-groups of $2$-rank $2$.
\end{abstract}
\maketitle

\section{Introduction}

In the classification of finite simple groups, a broad division is made between simple groups that are `small' in some sense and those that are `large'; one measure of size, less crude than the order, is the \emph{$p$-rank}: the rank of a largest elementary abelian subgroup of a Sylow $p$-subgroup of a group. In this article we will examine the possible fusion on `small' $p$-groups.

The modern way to describe fusion in finite groups is via saturated fusion systems (see, for example, \cite{blo2003} for background). Remarkably, there are saturated fusion systems that do not arise from finite groups and these \emph{exotic} systems are an interesting and rare phenomenon, particularly for the prime $2$.

One way of hunting for exotic fusion systems is to fix a finite $p$-group and ask for all possible saturated fusion systems on that $p$-group. If you can show that one of these does not come from a finite group, you've found an exotic system. Our work begins by completing the classification of saturated fusion systems on metacyclic $p$-groups, started by Stancu in \cite{stancu2006} with his description of saturated fusion systems on metacyclic $p$-groups for odd primes $p$. If $G$ is a finite group with Sylow $p$-subgroup $P$, then we denote the fusion system of $G$ by $\F_P(G)$.

\begin{thm}\label{t:intrometacyclic} Let $P$ be a metacyclic $2$-group.
\begin{enumerate}
\item If $P$ is dihedral or generalized quaternion, then there are three saturated fusion systems on $P$.
\item If $P$ is semidihedral then there are four saturated fusion systems on $P$.
\item If $P$ is the abelian group $C_{2^n}\times C_{2^n}$ then there are two saturated fusion systems on $P$.
\item If $P$ is none of the above groups, then there is a unique saturated fusion system on $P$, namely the fusion system $\F_P(P)$.
\end{enumerate}
\end{thm}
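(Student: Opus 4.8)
The backbone is Alperin's fusion theorem for saturated fusion systems: a saturated fusion system $\F$ on $P$ is generated by $\Aut_\F(P)$ together with the automizers $\Aut_\F(Q)$ of the $\F$-essential subgroups $Q\le P$. So the plan is to (i) determine the possible isomorphism types and $P$-embeddings of an $\F$-essential subgroup $Q$ and the possible automizers $\Aut_\F(Q)$, (ii) determine the possible $\Aut_\F(P)$, and (iii) for each metacyclic $2$-group $P$ assemble the admissible data, decide which assemblies give a saturated system, and count these up to isomorphism. Throughout I would use the classification of metacyclic $2$-groups, which partitions them into the homocyclic abelian groups $C_{2^n}\times C_{2^n}$, the remaining abelian groups $C_{2^m}\times C_{2^n}$ with $m\neq n$, the maximal-class groups $D_{2^n}$, $SD_{2^n}$, $Q_{2^n}$, and a short explicit list of further families.

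The key reduction is this. If $Q\le P$ is $\F$-essential, then $Q$ is $\F$-centric and $\Out_\F(Q)$ has a strongly $2$-embedded subgroup, so $\Aut(Q)$ is not a $2$-group. Since a subgroup of a metacyclic group is metacyclic, and (a short argument analysing an odd-order automorphism on $Q/\Phi(Q)\cong C_2\times C_2$ shows that) a metacyclic $2$-group with non-$2$-group automorphism group is $C_2\times C_2$, $Q_8$, or $C_{2^m}\times C_{2^m}$, and a copy of $C_{2^m}\times C_{2^m}$ with $m\ge 2$ is never $\F$-centric in a nonabelian metacyclic $2$-group, the only candidates for nonabelian $P$ are $Q\cong C_2\times C_2$ and $Q\cong Q_8$. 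In both cases $\Out(Q)\cong S_3$, and $\Out_P(Q)=N_P(Q)/Q$ is a Sylow $2$-subgroup of $\Out_\F(Q)\le S_3$, which contains a strongly $2$-embedded subgroup; analysing groups with a strongly $2$-embedded subgroup whose $2'$-part has order dividing $3$ forces $\Out_P(Q)\cong C_2$ and $\Out_\F(Q)\cong S_3$, so $\Aut_\F(Q)=\Aut(Q)$ is entirely determined. Finally $Z(P)\le C_P(Q)=Z(Q)$, so a centric $C_2\times C_2$ forces $Z(P)$ elementary abelian of order at most $4$, and a centric $Q_8$ forces $Z(P)\cong C_2$; combined with the metacyclic classification this shows that the nonabelian metacyclic $2$-groups admitting an essential subgroup are exactly $D_{2^n}$, $Q_{2^n}$ and $SD_{2^n}$.

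Now I would run the case analysis. If $P$ is abelian it has no proper $\F$-centric subgroup, so $\F$ is controlled at $P$, and since $\Aut_P(P)=1$ saturation forces $\Aut_\F(P)$ to have odd order: when $m\neq n$, $\Aut(P)$ is a $2$-group, so $\F=\F_P(P)$ is the only system, while when $m=n$, $|\Aut(P)|=|\GL_2(\mathbb{Z}/2^n)|=2^{4n-3}\cdot 3$, so $\Aut_\F(P)\in\{1,C_3\}$ up to conjugacy and there are two systems. If $P\in\{D_{2^n},SD_{2^n},Q_{2^n}\}$ then $\Aut(P)$ is a $2$-group — with $Q_8$, and degenerately $D_8$, needing a separate easy check — so $\Aut_\F(P)=\Aut_P(P)$ is forced, and $\F$ is determined by the set $\E$ of $\F$-essential subgroups, each contributing its full automorphism group. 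Using the standard presentations one enumerates the conjugacy classes of $\F$-centric $C_2\times C_2$'s and $Q_8$'s together with their $P$-normalizers: $D_{2^n}$ has two classes of centric $C_2\times C_2$ and no $Q_8$; $Q_{2^n}$ has two classes of centric $Q_8$ and no $C_2\times C_2$; $SD_{2^n}$ has one class of each. In $D_{2^n}$ and $Q_{2^n}$ the group $\Aut(P)$ interchanges the two classes, so the admissible $\E$, up to isomorphism of fusion systems, are $\emptyset$, one class, and both classes — three systems; in $SD_{2^n}$ the two centric subgroups are non-isomorphic, so $\E$ is $\emptyset$, the $C_2\times C_2$, the $Q_8$, or both — four systems. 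For every remaining metacyclic $P$ there is no essential subgroup and $\Aut(P)$ is a $2$-group, so $\F_P(P)$ is the unique system.

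The step I expect to give the most trouble is verifying that each candidate configuration really yields a \emph{saturated} fusion system and that distinct configurations give non-isomorphic systems. The cleanest route is to realise every candidate as $\F_P(G)$ for a suitable finite group — $S_4$, $\GL_3(2)$, $\SL_2(q)$, $\PSL_2(q)$, $\PGL_2(q)$, $\GL_2(q)$, $\GL_2(3)$, $\PSL_3(3)$, $M_{11}$ and their relatives supply the required realisations across all families — and, where no convenient group is available, to verify the saturation axioms directly, which here reduce to checking the behaviour at $P$ and at the (single class of each type of) essential subgroup. Non-isomorphism then follows by comparing the $\Aut_\F$-data and the fusion of involutions. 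A secondary but essential technical point is the bookkeeping inside $D_{2^n}$, $SD_{2^n}$ and $Q_{2^n}$: one must determine exactly how many conjugacy classes of centric $C_2\times C_2$ and $Q_8$ there are and how $\Aut(P)$ permutes them, since this is precisely what produces the final counts $3$, $3$ and $4$, and one must treat the smallest cases $D_8$ and $Q_8$ by hand.
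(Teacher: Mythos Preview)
Your overall strategy---reduce via Alperin's theorem to classifying essential subgroups, show these must be $V_4$ or $Q_8$, then enumerate conjugacy classes and realise the resulting systems by finite groups---matches the paper's, and your treatment of the abelian case and of the conjugacy-class bookkeeping in $D_{2^n}$, $SD_{2^n}$, $Q_{2^n}$ is essentially what the paper does in Theorem~\ref{t:tamefusion}.

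The one substantive divergence is in how you pass from ``$P$ has an essential $Q_8$'' to ``$P$ has maximal class''. You deduce $Z(P)\cong C_2$ and then invoke ``the metacyclic classification''; this is not wrong, but it is where the real content lies, and the sentence as written is a placeholder rather than an argument. The paper avoids the classification entirely: since every essential $Q_8$ has $\Z(Q)=\Z(P)$, the common centre $Z$ is strongly $\F$-closed (Lemma~\ref{l:Zstrclosed}), hence $Z=\Z(\F)$; then by the Kessar--Linckelmann transfer (Proposition~\ref{p:KL}) the image $Q/Z\cong V_4$ is $\F/Z$-centric in $P/Z$, so $P/Z$ has a self-centralizing $V_4$ and is therefore dihedral or semidihedral, whence $P$ has maximal class. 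This reduces the $Q_8$ case to the $V_4$ case in one stroke and never touches the split/nonsplit parameter families of metacyclic $2$-groups. If you want to keep your direct route you will need an honest lemma of the form ``a metacyclic $2$-group containing a self-centralizing $Q_8$ has maximal class'', proved independently of the fusion context.

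A smaller point: your parenthetical ``a short argument analysing an odd-order automorphism on $Q/\Phi(Q)$'' is optimistic. The paper's Proposition~\ref{p:metacyclic} needs an induction on $|Q|$ together with control of $\Omega_1$ to force $Q\cong Q_8$ in the nonabelian case; the Frattini-quotient argument alone only tells you the maximal subgroups are permuted transitively. Also, you correctly flag $Q_8$ and $D_8$ as degenerate, but note that $Q_8$ genuinely has only two saturated fusion systems (no proper essentials, $\Out(Q_8)\cong S_3$), so the count ``three'' in part~(1) implicitly assumes $|P|\geq 16$.
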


In Section \ref{sec:centerfree} we construct all of these saturated fusion systems as fusion systems of finite groups. For odd primes $p$, any saturated fusion system of a metacyclic $p$-group $P$ is the fusion system of a group of the form $P\rtimes A$, where $A$ is a $p'$-group of automorphisms of $P$. If every saturated fusion system on a $p$-group $P$ is of this form, then $P$ is called \emph{resistant}. We provide a new proof of Stancu's result that metacyclic $p$-groups, for $p$ odd, are resistant.

After a short section proving that \emph{Suzuki $2$-groups} -- nonabelian $2$-groups with an automorphism that permutes the involutions transitively -- are resistant, we turn to fusion systems on groups of $2$-rank $2$. It is shown in \cite{bcglo2007} that, up to isomorphism, one may reconstruct $\F$ from $\F/\Z(\F)$ and $P$, so it makes sense to study center-free fusion systems. In Section \ref{sec:centerfree}, we prove the following result, adapting the fusion-theoretic argument of Alperin in \cite{alperinbrauergorenstein1973}.

\begin{thm}\label{t:introcenterfree} Let $\F$ be a saturated fusion system on a finite $2$-group $P$ and suppose that $P$ has $2$-rank $2$. If $\Z(\F)=1$, then $P$ is dihedral, semidihedral, wreathed, $C_{2^n}\times C_{2^n}$ or the Sylow $2$-subgroup of $\PSU_3(4)$.
\end{thm}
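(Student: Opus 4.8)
The plan is to follow the classical strategy of Alperin–Brauer–Gorenstein for determining $2$-groups of $2$-rank $2$ that can occur as Sylow subgroups of simple groups, but recast entirely inside a saturated fusion system $\F$ on $P$. First I would recall the group-theoretic classification of $2$-groups of $2$-rank $2$: by a theorem going back to work surrounding \cite{alperinbrauergorenstein1973}, a $2$-group of $2$-rank $2$ is either metacyclic, or a (central) product related to $\mathrm{SL}_2$ type groups, or belongs to a short explicit list including the Sylow $2$-subgroups of $\PSU_3(4)$, $U_3(4)$-type wreathed groups, and groups of maximal class (dihedral, semidihedral, quaternion). The point is that many of these $2$-groups do carry a saturated fusion system, but only some can carry one with trivial center; so the work is to eliminate the rest.

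Next I would set up the fusion-theoretic machinery. Since $P$ has $2$-rank $2$, every elementary abelian subgroup has order at most $4$, and in particular every $\F$-centric subgroup $Q$ has $Q/Z(Q)$ controlling a small amount of fusion; more importantly, $\Out_\F(Q)$ embeds into $\mathrm{Out}(Q)$ and, for $Q$ of $2$-rank $2$, the relevant outer automorphism groups are tightly constrained (e.g.\ for $Q$ homocyclic of rank $2$ one gets a subgroup of $\GL_2(2^k)$-type data, for $Q$ of maximal class one gets a $2$-group times a small odd part). The hypothesis $\Z(\F)=1$ forces $Z(P)$ to be non-trivially fused: there must be an $\F$-essential subgroup, or at least $\Aut_\F(P)$ must move $Z(P)$, by the standard fact that $\Z(\F)$ is the subgroup of $Z(P)$ fixed by all of $\F$. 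So I would invoke Alperin's fusion theorem: $\F$ is generated by $\Aut_\F(P)$ together with $\Aut_\F(Q)$ for $\F$-essential subgroups $Q$, and then analyze, subgroup by subgroup in the classification list, what essential subgroups are available and what $p'$-automorphisms they admit.

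The heart of the argument is then a case division on the isomorphism type of $P$ among $2$-rank-$2$ groups. For each candidate $P$ not on the target list, I would show either (a) $P$ has no $\F$-essential subgroups and $\Aut_\F(P)$ is a $2$-group (so $\F=\F_P(P)$ and $\Z(\F)=Z(P)\ne 1$, contradiction) — this covers abelian types other than $C_{2^n}\times C_{2^n}$, and many of the "product" cases; or (b) even with essential subgroups present, the constraints from saturation (the extension axiom, Burnside-type arguments on $\Aut_\F(P)$-action on $P/\Phi(P)$, and the requirement that fusion be "consistent" at the intersections of essentials) force a fixed point in $Z(P)$. For generalized quaternion $P$, I would use the well-known fact that any saturated fusion system on a quaternion group has a non-trivial center (the unique involution is central and $\F$-fixed, since an automorphism of $Q_{2^n}$ of odd order is trivial on the center), ruling it out. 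Meanwhile dihedral, semidihedral, wreathed, $C_{2^n}\times C_{2^n}$, and the $\PSU_3(4)$ Sylow $2$-group survive because each admits an honest group $G$ (namely $\PSL_2$, $\PGL_2$/$\PSU_3(4)$-type, $\mathrm{PSL}_2(q)\wr C_2$-type amalgams, $\mathrm{PSL}_3$-type, and $\PSU_3(4)$ itself) realizing a center-free fusion system — though constructing these realizations is deferred to Section \ref{sec:centerfree}, so here I only need the \emph{non}-existence direction.

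The main obstacle I expect is case (b): ruling out those $2$-rank-$2$ groups that do admit exotic-looking essential subgroups but nonetheless cannot lose their center. Concretely, the delicate groups are the "wreathed-but-not-quite" and central-product families where an essential subgroup $Q$ of index $2$ is homocyclic abelian of rank $2$, so $\Out_\F(Q)$ could in principle be $S_3$ or larger, moving $Z(P)$; one must show that the $\Aut_\F(P)$-action, together with the saturation axioms at $P$ and at the (few) other essentials, is incompatible unless $P$ is literally dihedral, semidihedral, wreathed, $C_{2^n}\times C_{2^n}$, or the $\PSU_3(4)$ type. This is exactly the place where Alperin's original $\mathrm{SL}_2(q)$/$\PGL_2(q)$/$\PSU_3(q)$-trichotomy enters, and translating his transfer-and-character arguments into the language of automizers and essential subgroups — while keeping track of which configurations are genuinely realizable — is the part that requires the most care.
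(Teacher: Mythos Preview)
Your plan rests on a ``classification of $2$-groups of $2$-rank $2$'' that you propose to cite from around \cite{alperinbrauergorenstein1973}, but no such classification exists in the form you need: the Alperin--Brauer--Gorenstein work classifies finite \emph{simple} groups whose Sylow $2$-subgroups have $2$-rank $2$, not the $2$-groups themselves, and there are infinitely many $2$-groups of $2$-rank $2$ with no short structural description. So your case division in the third paragraph has no list to run over, and the strategy stalls before it starts. Your case (b) is also left vague at exactly the hard point.

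The paper's argument avoids this entirely by reversing the order of analysis: rather than classify $P$ and then look for essentials, it first locates a single $\F$-centric, $\F$-radical subgroup $U$ on which some odd-order $\phi\in\Aut_\F(U)$ moves $\Omega_1(\Z(U))$ (such $U$ exists precisely because $\Z(\F)=1$). The $2$-rank hypothesis forces $|\Omega_1(\Z(U))|=4$, so $U$ has exactly three involutions, all central in $U$, and $\phi$ permutes them transitively. If $U$ is abelian, Lemma~\ref{l:homocyclicwreathed} immediately gives $P$ dihedral, semidihedral, wreathed, or homocyclic. If $U$ is nonabelian, then by definition $U$ is a Suzuki $2$-group; the only Suzuki $2$-group of $2$-rank $2$ is the Sylow $2$-subgroup of $\PSU_3(4)$, and a short argument using the constrained system $\N_\F(U)$ shows that $U<P$ would force $\N_P(U)$ to have $2$-rank $3$, so $U=P$. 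The whole proof is a paragraph. The insight you are missing is that $\Z(\F)=1$ hands you a subgroup $U$ which is \emph{forced} to be either homocyclic abelian or a Suzuki $2$-group, and each of those possibilities pins down $P$ directly.
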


This list is very similar to that of the Sylow $2$-subgroups of the finite simple groups of $2$-rank $2$, although we get more abelian $2$-groups.

Having determined the center-free fusion systems on $2$-groups of $2$-rank $2$, we consider all saturated fusion systems. In \cite{drv2007}, D\'iaz, Ruiz and Viruel gave a description of all saturated fusion systems on $p$-groups of $p$-rank $2$ for $p$ odd. The case of $2$-rank $2$ is not attempted in \cite{drv2007} and there are special difficulties in this case arising from the fact that there are many $2$-groups of $2$-rank $2$.

Here we will start the classification of such saturated fusion systems by exhibiting a classification of all possible $\F$-centric, $\F$-radical subgroups of such a fusion system, that is, classifying all $2$-groups of $2$-rank $2$ that possess a nontrivial automorphism of odd order (the automorphism groups of such groups, essentially, determine the fusion system). This does not appear to have been carried out in the literature, although such situations have been studied in, for example, \cite{thomas1989a} and \cite{thomas1989}. 

In Section \ref{sec:oddorderautos} we define $2$-groups $\GA_n$ and $\GB_n$, each of order $2^n$ for $n\geq 6$, which possess three involutions and an outer automorphism of order $3$. The next theorem comes from that section. (Our notation for various $p$-groups is defined at the end of the introduction.)

\begin{thm}\label{t:introoddauto} Let $P$ be a finite $2$-group of $2$-rank $2$. If $\Aut(P)$ is not a $2$-group, then $P$ is one of the following groups:
\begin{enumerate}
\item $C_{2^n}\times C_{2^n}$ or $Q_8\times C_{2^n}$ for some $n\geq 1$;
\item $Q_8\ast C_{2^n}$ (the central product of $Q_8$ and $C_{2^n}$) for some $n\geq 2$;
\item $Q_8\ast D_{2^n}$ or $Q_8\times Q_{2^n}$ for some $n\geq 3$;
\item $\GA_n$ or $\GB_n$ for $n\geq 6$ (noting that $\GA_6\cong \GB_6$);
\item $Q_8\wr C_2$;
\item the Sylow $2$-subgroup of $\PSU_3(4)$.
\end{enumerate}
\end{thm}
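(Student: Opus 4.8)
The plan is to pass at once to a single automorphism of odd prime order, and then to run a case analysis on the structure of $\Omega_1(P)$, using the $2$-rank $2$ hypothesis throughout. If $\Aut(P)$ is not a $2$-group it contains an element $\sigma$ of odd prime order $r$; since $\gen{\sigma}$ acts coprimely it acts faithfully, and nontrivially, on $P/\Phi(P)$, so $d:=\dim_{\mathbb{F}_2}P/\Phi(P)\ge 2$ and $r\mid|\GL_d(2)|$. A short argument bounds $d$ and confines $r$ to $\{3,5,7\}$; the case $r=7$ is then eliminated, and $r=5$ survives only for $Q_8\ast D_8$ and the Sylow $2$-subgroup of $\PSU_3(4)$, so for the bulk of the proof one may take $r=3$. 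Since a $2$-group of $2$-rank $2$ is neither cyclic nor generalised quaternion, $\Omega_1(P)$ has $2$-rank exactly $2$, leaving two cases: (i) $\Omega_1(P)\cong C_2\times C_2$, equivalently $P$ has exactly three involutions; and (ii) $\Omega_1(P)$ is nonabelian, equivalently $P$ has more than three involutions.

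In case (i), $\sigma$ acts on the three involutions of $P$ either trivially or as a $3$-cycle, and every involution of $P$ centralises $\Omega_1(\Z(P))$, so if $\Z(P)$ has $2$-rank $2$ then $\Omega_1(P)=\Omega_1(\Z(P))$. If $P$ is abelian then $P\cong C_{2^a}\times C_{2^b}$, whose automorphism group is a $2$-group unless $a=b$, leaving $C_{2^n}\times C_{2^n}$. If $P$ is nonabelian, one fixes an involution $z$ and analyses $C_P(z)$, the subgroups $[P,P]$ and $\Z(P)$, and the image of $\sigma$ in $\Aut(\Omega_1(P))\cong S_3$, reconstructing $P$ from generators and relations consistent with having only three involutions and admitting $\sigma$; this forces $P$ to be $Q_8\times C_{2^n}$, $Q_8\times Q_{2^n}$, one of the families $\GA_n$, $\GB_n$, or the Sylow $2$-subgroup of $\PSU_3(4)$. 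The nonabelian metacyclic $2$-groups with exactly three involutions, namely the modular maximal-cyclic ones, have $2$-group automorphism groups and are excluded.

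In case (ii), one takes a maximal normal abelian subgroup $A$, so that $A\cong C_{2^a}\times C_{2^b}$, $\Z(C_P(A))=A$, and $P/C_P(A)\hookrightarrow\Aut(A)$; analysing the $\sigma$-action on $P$ in terms of $A$ and of $\Omega_1(P)$, and invoking the structure theory of $2$-groups of $2$-rank $2$ with more than three involutions, one concludes that $P$ is either metacyclic — dihedral or semidihedral, each with a $2$-group automorphism group and hence excluded — or is built around a copy of $Q_8$, which gives $Q_8\ast C_{2^n}$, $Q_8\ast D_{2^n}$, and $Q_8\wr C_2$. It remains only to check that every group listed in the statement does admit an automorphism of odd order, which is routine.

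I expect the main obstacle to be the nonabelian half of case (i): pinning down exactly the families $\GA_n$ and $\GB_n$, recording the coincidence $\GA_6\cong\GB_6$ and the point at which the two families genuinely diverge, and separating both from the Sylow $2$-subgroup of $\PSU_3(4)$, which likewise has three involutions cyclically permuted by an automorphism of order $3$ but is distinguished by its further automorphism of order $5$. This calls for a careful, explicit reconstruction of $P$ from the $\sigma$-action on $\Omega_1(P)$ together with the chain $\Omega_1(P)\le\Z(P)\le P$ and the $2$-rank $2$ constraint, rather than any soft argument. One could shorten the structural part by instead quoting the classification of $2$-groups of $2$-rank $2$ (due to MacWilliams, with subsequent refinements) and then merely computing $\Aut(P)$ group by group, but some version of this reconstruction seems hard to avoid for the $\GA_n$, $\GB_n$ families.
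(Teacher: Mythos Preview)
Your overall case division --- split on whether $P$ has exactly three involutions or more --- matches the paper's (Theorems \ref{t:oddorderauts} and \ref{t:oddorderauts2}), and your reduction to $r\in\{3,5,7\}$ with $r=7$ eliminated is also in line with what happens there. But the engine of the argument is quite different, and your proposal is vague precisely where the paper needs real machinery.

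The paper does \emph{not} attempt to reconstruct $P$ directly from $\Omega_1(P)$ and the $\sigma$-action. Instead it runs an induction on $|P|$ through $\phi$-invariant maximal subgroups, using MacWilliams's theorem to bound $d(P)\le 4$ and Proposition~\ref{p:4dimvec} to control the $\phi$-action on $P/\Phi(P)$. The key structural lemma (Lemma~\ref{l:RisQ8}) is that once $|P|\ge 2^8$ and there is a $\phi$-invariant maximal subgroup $Q$, one has $[P,\phi]=[Q,\phi]\cong Q_8$ and $P=[P,\phi]\,\C_P([P,\phi])$; this central-product decomposition is what produces the lists $Q_8\times C_{2^n}$, $Q_8\ast C_{2^n}$, $Q_8\ast D_{2^n}$, $Q_8\times Q_{2^n}$, $\GA_n$, $\GB_n$ almost mechanically, with Janko's theorem (Theorem~\ref{t:jankoCPWmetacyclic}) on metacyclic centralisers of $C_4\times C_4$ subgroups doing the work inside the three-involution case. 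The base of the induction and several sporadic eliminations are handled by an explicit Magma search over all $2$-groups of order at most $2^9$ (Proposition~\ref{p:smallorder}); the paper leans on this repeatedly.

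Your plan to ``analyse $C_P(z)$, the subgroups $[P,P]$ and $\Z(P)$, and reconstruct $P$ from generators and relations'' does not identify any such mechanism, and in the more-than-three-involutions case your maximal normal abelian subgroup $A$ does not obviously interact well with $\sigma$ (indeed the paper shows, in effect, that $\sigma$ acts trivially on any $\phi$-invariant abelian normal subgroup when $P$ is not abelian or Suzuki). So while your outline is not wrong as a roadmap, it lacks the concrete inductive step and the $[P,\phi]\cong Q_8$ observation that actually carry the proof, and it omits the computer verification on which the paper's argument genuinely depends for small orders.
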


For each of these it is possible to analyze the possible fusion systems containing it. If $\F$ is a saturated fusion system on a finite $2$-group $P$, and if $U$ denotes an $\F$-essential subgroup of $P$, then by necessity $U$ is one of the $2$-groups above (or possibly $Q_8$); if $U$ is not normal in $P$ then the structure of $\N_P(U)$ and $\N_\F(U)$ is very restricted. This mirrors the classification in \cite{drv2007}, where there are many groups for which an essential subgroup has index $p$, but very few with larger index.

\bigskip

The organization of this paper is as follows: in the next section are the preliminary results that we need, with the following section proving Theorem \ref{t:intrometacyclic}, and considering metacyclic $p$-groups for $p$ odd. In Section \ref{sec:suzukigroups} we introduce Suzuki $2$-groups and prove that they are resistant, and in Section \ref{sec:centerfree} we prove Theorem \ref{t:introcenterfree}. The final section proves Theorem \ref{t:introoddauto}.

Our notation and definitions are largely standard now, and are taken, for example, from  \cite{craven2010}. Note that, as in \cite{craven2010}, our maps will be written on the right, and composition is left-to-right. The cyclic group of order $q$ is denoted by $C_q$, the dihedral, semidihedral, and generalized quaternion groups of order $2^n$ are denoted by $D_{2^n}$, $SD_{2^n}$ and $Q_{2^n}$ respectively, and the Klein $4$-group $C_2\times C_2$ is denoted by $V_4$. The wreathed $2$-group is defined to be $C_{2^n}\wr C_2$.

\section{Preliminaries}

We begin by finding sufficient conditions for the maximal subgroups of a finite $p$-group $P$ to be isomorphic via automorphisms of $P$. This will serve two important roles in the paper. First, for many of the groups in this paper, the structure of the maximal subgroups is understood, allowing us to verify immediately whether the sufficient conditions hold. Second, in this situation, any proper characteristic subgroup of $P$ is contained in $\Phi(P)$, the Frattini subgroup of $P$. This, for instance, allows us to use the famous result of Burnside (see \cite[Theorem 5.1.14]{gorenstein}) that the kernel of the natural map from $\Aut(P)$ to $\Aut(P/\Phi(P))$ is a $p$-group; this same result holds, of course, for any characteristic subgroup of $P$ contained in $\Phi(P)$.

\begin{prop}\label{p:maxsubs}
Let $P$ be a $2$-generator finite $p$-group and let $A$ be a subgroup of $\Out(P)$. If $|A|$ is divisible by $p$ and $\O_p(A) = 1$, then the preimage of $A$ in $\Aut(P)$ acts transitively on the set of maximal subgroups of $P$.
\end{prop}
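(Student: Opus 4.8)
The key reduction is to pass from the action of $\Aut(P)$ on maximal subgroups of $P$ to the action of $\GL_2(p)$ on the lines of the two-dimensional $\mathbb{F}_p$-vector space $\bar P = P/\Phi(P)$. Since $P$ is a $2$-generator $p$-group, $\bar P \cong (\mathbb{F}_p)^2$, and by the correspondence theorem the maximal subgroups of $P$ are in bijection with the hyperplanes (here, lines) of $\bar P$; this bijection is equivariant for the natural map $\Aut(P) \to \Aut(\bar P) = \GL_2(p)$. So it suffices to show that the image $\bar A \le \GL_2(p)$ of the preimage of $A$ acts transitively on the $p+1$ lines of $\bar P$.

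First I would record that, by Burnside's theorem (the cited $\cite[Theorem 5.1.14]{gorenstein}$), the kernel of $\Aut(P) \to \GL_2(p)$ is a normal $p$-subgroup of $\Aut(P)$; call it $K$, so $K \le \O_p(\Aut(P))$ and in particular $K$ maps into $\O_p(\Out(P))$. Let $\hat A$ denote the preimage of $A$ in $\Aut(P)$ and $\bar A$ its image in $\GL_2(p)$; then $\hat A / (K \cap \hat A) \cong \bar A$ and $K \cap \hat A$ is a normal $p$-subgroup of $\hat A$ mapping onto a normal $p$-subgroup of $A$ contained in $\O_p(A)$. Since $\O_p(A) = 1$ by hypothesis, $K \cap \hat A$ maps trivially to $A$, i.e. $K \cap \hat A \le \Inn(P)$; combined with $p \mid |A|$ this gives $p \mid |\bar A|$ and, more importantly, that $\bar A$ inherits the property $\O_p(\bar A)$-triviality: any normal $p$-subgroup of $\bar A$ would pull back (via $\hat A$) to a normal subgroup of $A$ that is a $p$-group, forcing $\O_p(\bar A) = 1$. (The $p$-part of $|\bar A|$ is nontrivial but $\bar A$ has no normal $p$-subgroup.)

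The remaining, and genuinely group-theoretic, step is the following fact about subgroups of $\GL_2(p)$: \emph{if $H \le \GL_2(p)$ satisfies $p \mid |H|$ and $\O_p(H) = 1$, then $H$ is transitive on the $p+1$ lines of $(\mathbb{F}_p)^2$.} To prove this, pick a Sylow $p$-subgroup $S$ of $H$: it is a nontrivial $p$-subgroup of $\GL_2(p)$, whose Sylow $p$-subgroup is the cyclic group of order $p$ of upper unitriangular matrices, so $S$ has order exactly $p$ and fixes a unique line $\ell_0$ while acting on the remaining $p$ lines as a single $p$-cycle (a transvection). Thus the $H$-orbit of any line $\ell \ne \ell_0$ already has size divisible by $p$, hence size $p$ or $p+1$; if it has size $p$ then $H$ fixes $\ell_0$ setwise (the complement of that orbit), so $H$ normalizes the stabilizer of $\ell_0$, which is a Borel subgroup, and then $S \le$ that Borel forces $S \trianglelefteq \langle S^H\rangle$ — more cleanly: $H$ fixing $\ell_0$ means $H$ lies in the stabilizer $B$ of $\ell_0$, whose normal Sylow $p$-subgroup is the group of transvections with axis $\ell_0$, contradicting $\O_p(H) = 1$. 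Hence the orbit of $\ell$ has size $p+1$ and $H$ is transitive. Applying this with $H = \bar A$ and transporting back along the equivariant bijection gives that $\hat A$ is transitive on maximal subgroups of $P$, as required.

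The main obstacle is the interplay in the second paragraph between $\O_p(A)$, $\O_p(\Out(P))$, and $\O_p$ of the image in $\GL_2(p)$: one must be careful that "no normal $p$-subgroup" is genuinely transmitted through the (non-surjective, with $p$-group kernel) maps $\hat A \to A$ and $\hat A \to \bar A$. Everything else — the correspondence between maximal subgroups and lines, and the elementary $\GL_2(p)$ computation — is routine, though the $\GL_2(p)$ lemma is where the hypothesis "$\O_p = 1$" is actually consumed (it rules out exactly the Borel subgroups, which are the imprimitive-looking obstructions to transitivity).
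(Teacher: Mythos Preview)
Your argument is correct, but the route differs from the paper's. Both proofs reduce to the image $\bar A$ of $A$ in $\GL_2(p)$ (and your second paragraph, though a little tangled, does establish $\bar A\cong A$ once you note that $\Inn(P)\le K$ since $P'\le\Phi(P)$, so $K\cap\hat A=\Inn(P)$). From there the paper proves a \emph{stronger} structural fact: it counts Sylow $p$-subgroups of $A$ to get $|A|\ge p(p+1)$, then uses the subgroup structure of $\SL_2(p)$ (quasisimplicity for $p\ge 5$, and ad hoc checks for $p=2,3$) to conclude that $\SL_2(p)\le A$, whence transitivity on the projective line is immediate. Your approach instead argues transitivity directly: a Sylow $p$-subgroup of $\bar A$ fixes one line and cycles the remaining $p$, so a proper orbit would force $\bar A$ into a Borel subgroup, contradicting $\O_p(\bar A)=1$.

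Your route is more elementary and avoids the case analysis on $p$. The paper's route, however, yields the bonus that $\SL_2(p)\le A$, and this extra information is actually invoked later in the paper (in the proof of Theorem~\ref{t:metacyclicodd}, where one needs that $\Aut_\F(Q)$ contains $\SL_2(p)$ and hence the inversion automorphism). So while your proof of the proposition as stated is cleaner, be aware that the paper is quietly proving more than the proposition asserts, and that surplus is not recoverable from your argument alone.
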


\begin{proof} Let $\tilde A$ denote the full preimage of $A$ under the map $\Aut(P)\to \Out(P)$, and notice that $\Inn(P)$ is the kernel of the map $\tilde A\to \Aut(P/\Phi(P))\cong \GL_2(p)$. Thus $A$ is isomorphic to a subgroup of $G=\GL_2(p)$. We claim that (identifying $A$ with its image in $G$) $A$ contains $\SL_2(p)$. Since $\O_p(A)=1$, the number of Sylow $p$-subgroups of $A$ must be $p+1$ (as there are that many in $G$), and so $|A|\geq p(p+1)$; from the structure of $\GL_2(p)$, we see that the same holds for $A\cap \SL_2(p)$. In particular, the index of $A\cap \SL_2(p)$ in $\SL_2(p)$ is at most $p-1$, which immediately leads to a contradiction if $p\geq 5$ as $\SL_2(p)$ is quasisimple and $p\mid |\SL_2(p)|$. If $p=2$, then $A\cap \SL_2(p)=S_3$, and it is easy to see that $\SL_2(3)$ possesses no subgroup of index $2$. Therefore $A\cap \SL_2(p)=\SL_2(p)$.

Since $\SL_2(p)$ acts transitively on the projective line, we see that $A$ permutes transitively the $p+1$ different $1$-dimensional subspaces of the $2$-dimensional $\mathbb{F}_p$-vector space $P/\Phi(P)$; therefore $\tilde A$ must permute the maximal subgroups of $P$ transitively. In particular, the maximal subgroups are isomorphic.
\end{proof}

The conditions of this proposition were set up in such a way that if $Q$ is an $\F$-essential subgroup of a saturated fusion system $\F$, and $Q$ is $2$-generator, then all maximal subgroups of $Q$ are isomorphic. Notice that all we really needed in the proof was that $A$ contained elements of order $p+1$ in $\SL_2(p)$, since they permute the $1$-dimensional subspaces transitively. In the case where $p=2$, we can therefore say something stronger.

\begin{cor}\label{c:maximal}
Let $P$ be a $2$-generator $2$-group. If $\Aut(P)$ is not a $2$-group, then $\Aut(P)$ acts transitively on the maximal subgroups of $P$.
\end{cor}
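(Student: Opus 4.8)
The plan is to reduce, exactly as in the proof of Proposition \ref{p:maxsubs}, to the linear action on $V := P/\Phi(P)$. Since $P$ is a $2$-generator $2$-group, $V$ is a $2$-dimensional vector space over $\mathbb{F}_2$, and the natural map $\Aut(P)\to\Aut(V)=\GL_2(2)$ has kernel consisting of those automorphisms acting trivially on $P/\Phi(P)$; by Burnside's theorem (cited just before Proposition \ref{p:maxsubs}) this kernel is a $2$-group. Recall also that $\GL_2(2)\cong S_3$, and that $\Inn(P)$ lies in this kernel, since $[P,P]\leq\Phi(P)$ forces inner automorphisms to act trivially on $P/\Phi(P)$; thus the map factors through $\Out(P)$.

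Next I would use the standard correspondence for finite $p$-groups: the (three) maximal subgroups of $P$ are precisely the preimages in $P$ of the (three) $1$-dimensional subspaces of $V$. Hence $\Aut(P)$ acts on the set of maximal subgroups of $P$ through its image $\bar A\leq\GL_2(2)$ acting on the three nonzero vectors of $V$ (equivalently, on the projective line), and it suffices to show that $\bar A$ is transitive on these three points.

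Finally, since $\Aut(P)$ is not a $2$-group while the kernel of $\Aut(P)\to\GL_2(2)$ is a $2$-group, the image $\bar A$ is a subgroup of $\GL_2(2)\cong S_3$ that is not a $2$-group, so $3\mid|\bar A|$ and $\bar A$ contains an element of order $3$. Such an element has characteristic polynomial the irreducible $x^2+x+1$ over $\mathbb{F}_2$, so it fixes no nonzero vector of $V$ and therefore permutes the three nonzero vectors of $V$ in a single $3$-cycle. In particular $\bar A$, and hence $\Aut(P)$, is transitive on the maximal subgroups of $P$.

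There is essentially no obstacle here: this is just the $p=2$ specialization of the remark following Proposition \ref{p:maxsubs} (where we noted that all that was really needed was an element of order $p+1$ inside $\SL_2(p)$), using the coincidence $\SL_2(2)=\GL_2(2)$. The only point meriting a word of justification is the bijection between maximal subgroups of $P$ and hyperplanes of $P/\Phi(P)$, which is routine.
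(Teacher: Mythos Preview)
Your proof is correct and follows essentially the same route as the paper's: both reduce to the image of $\Aut(P)$ (equivalently $\Out(P)$) in $\GL_2(2)\cong S_3$, observe that this image cannot be a $2$-group since the kernel is, and conclude that the resulting element of order $3$ permutes the three $1$-dimensional subspaces transitively. The paper's version is just a one-line appeal to the remark after Proposition~\ref{p:maxsubs}, whereas you have spelled out the details (including the unnecessary but correct characteristic-polynomial justification).
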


\begin{proof}
Since $\GL_2(2)\cong S_3$, $\Out(P)$ contains an element of order $p+1=3$ which, by the above remark, transitively permutes the three subspaces of dimension $1$.
\end{proof}

In the next corollary, we collect several consequences of Proposition \ref{p:maxsubs} and Corollary \ref{c:maximal}. Recall from \cite[Theorem 5.4.5]{gorenstein} that the $2$-groups with maximal class are the dihedral, semidihedral and generalized quaternion groups (all of which are $2$-generator groups).

\begin{cor}\label{c:omnibus}
Let $P$ be a finite $p$-group.
\begin{enumerate}
\item If $\Aut(P)$ acts transitively on the maximal subgroups of $P$, then every proper characteristic subgroup $C$ of $P$ is contained in $\Phi(P)$. In particular, there is a bijection between the maximal subgroups of $P$ and $P/C$ and the kernel of the natural map $\Aut(P) \to \Aut(P/C)$ is a $p$-group.\label{cp:propchar}
\item Let $p$ be odd. If $P \cong C_{p^m} \times C_{p^n}$ and there exists a subgroup $A$ of $\Aut(P)$ such that $|A|$ is divisible by $p$ and $\O_p(A) = 1$, then $m = n$.\label{cp:2genabelodd}
\end{enumerate}
Assume that $p=2$.
\begin{enumerate}
\setcounter{enumi}{2}
\item If $P$ possesses a self-centralizing subgroup isomorphic to $V_4$, then $P$ is dihedral or semidihedral.\label{cp:selfcentv4}
\end{enumerate}
Assume further that $\Aut(P)$ is not a $2$-group.
\begin{enumerate}
\setcounter{enumi}{3}
\item If $P \cong C_{2^m}\times C_{2^n}$, then $m = n$.\label{cp:2genabeleven}
\item If $P$ has maximal class, then $P\cong Q_8$.\label{cp:maxclass}
\item If $P$ has a cyclic maximal subgroup, then $P\cong V_4$ or $P \cong Q_8$.\label{cp:cyclicmaximal}
\end{enumerate}
\end{cor}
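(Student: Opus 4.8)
My plan is to establish part \ref{cp:propchar} first, since parts \ref{cp:2genabelodd}, \ref{cp:2genabeleven}, \ref{cp:maxclass} and \ref{cp:cyclicmaximal} will then follow by feeding it Proposition \ref{p:maxsubs} or Corollary \ref{c:maximal}, whereas part \ref{cp:selfcentv4} is a separate, purely group-theoretic statement where I expect the real work to lie. For part \ref{cp:propchar} I would argue by contradiction: if $C$ is a proper characteristic subgroup with $C\not\le\Phi(P)$, then $C$ avoids some maximal subgroup $M$, and since $\Aut(P)$ permutes the maximal subgroups transitively and fixes $C$ setwise, $C$ avoids \emph{every} maximal subgroup; hence $C\Phi(P)$ lies in no maximal subgroup, so $C\Phi(P)=P$, and as $\Phi(P)$ consists of nongenerators this forces $C=P$, a contradiction. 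Granting $C\le\Phi(P)$, the remaining assertions are routine: every maximal subgroup of $P$ contains $C$ and so corresponds to one of $P/C$, and an automorphism trivial on $P/C$ is trivial on the further quotient $P/\Phi(P)$, so the kernel of $\Aut(P)\to\Aut(P/C)$ lies inside the kernel of $\Aut(P)\to\Aut(P/\Phi(P))$, which is a $p$-group by Burnside's theorem \cite[Theorem 5.1.14]{gorenstein}.

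For parts \ref{cp:2genabelodd} and \ref{cp:2genabeleven} I would write $P=\langle a\rangle\times\langle b\rangle$ with $|a|=p^m\le p^n=|b|$, noting $m\ge 1$ (indeed $\Aut(C_{p^n})$ has a nontrivial normal Sylow $p$-subgroup, so $\O_p(A)\ne 1$ whenever $p\mid|A|$, and a cyclic $2$-group has a $2$-group of automorphisms in any case). Since $P$ is abelian it is $2$-generator with $\Inn(P)=1$, so Proposition \ref{p:maxsubs} (in part \ref{cp:2genabelodd}) or Corollary \ref{c:maximal} (in part \ref{cp:2genabeleven}) makes $\Aut(P)$ transitive on the maximal subgroups of $P$, and then part \ref{cp:propchar} forces every proper characteristic subgroup into $\Phi(P)$. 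If $m<n$ this is contradicted by the characteristic subgroup $\Omega_m(P)=\{x\in P:x^{p^m}=1\}$, which is proper (it omits $b$) but contains $a\notin\Phi(P)=\langle a^p\rangle\times\langle b^p\rangle$. Hence $m=n$.

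For parts \ref{cp:maxclass} and \ref{cp:cyclicmaximal}, in each case $P$ is again a $2$-generator $2$-group --- a maximal-class $2$-group is; a non-cyclic $2$-group with a cyclic maximal subgroup has $|P:\Phi(P)|=4$; and a cyclic $P$ is excluded, its automorphism group being a $2$-group --- so Corollary \ref{c:maximal} forces the three maximal subgroups of $P$ to be pairwise isomorphic, after which I would simply run through the relevant classification lists in \cite{gorenstein}. For \ref{cp:maxclass}, $P$ is $D_{2^n}$, $SD_{2^n}$ or $Q_{2^n}$ by \cite[Theorem 5.4.5]{gorenstein}, with respective maximal subgroups $\{C_{2^{n-1}},D_{2^{n-1}},D_{2^{n-1}}\}$, $\{C_{2^{n-1}},D_{2^{n-1}},Q_{2^{n-1}}\}$ and $\{C_{2^{n-1}},Q_{2^{n-1}},Q_{2^{n-1}}\}$, which are pairwise isomorphic only for $P\cong Q_8$. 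For \ref{cp:cyclicmaximal}, $P$ lies on the list of $2$-groups with a cyclic subgroup of index $2$ --- $C_{2^{n-1}}\times C_2$, $D_{2^n}$, $SD_{2^n}$, $Q_{2^n}$, and the modular group of order $2^n$ --- and among these the only ones all of whose maximal subgroups are cyclic (as they must be here, one of them being cyclic) are $V_4$ and $Q_8$.

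Part \ref{cp:selfcentv4} is the hard one, and I expect it to carry essentially all the real difficulty. Let $V\cong V_4$ with $\C_P(V)=V$. Then $\Z(P)\le\C_P(V)=V$, so either $P=V\cong V_4$, or $|\Z(P)|=2$. In the second case I would induct on $|P|$: the cases $|P|\le 8$ can be checked directly (giving $V_4$ or $D_8$), and otherwise one picks a maximal subgroup $M$ with $V\le M<P$; then $\C_M(V)\le\C_P(V)=V\le M$, so $V$ is self-centralizing in $M$ too, whence $M$ is dihedral or semidihedral by induction. The obstacle is the passage back up, namely classifying the $2$-groups $P$ having a dihedral or semidihedral maximal subgroup together with a self-centralizing $V_4$; I plan to carry this out through the classical classification of $2$-groups with a maximal subgroup of maximal class, using that the self-centralizing-$V_4$ condition rules out $Q_{2^n}$ (unique involution) and the near-abelian options, leaving only dihedral and semidihedral groups. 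Alternatively, the statement of \ref{cp:selfcentv4} is classical and can simply be quoted from \cite{gorenstein}.
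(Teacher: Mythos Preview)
Your argument is correct throughout, and for parts \ref{cp:propchar}, \ref{cp:maxclass} and \ref{cp:cyclicmaximal} it matches the paper's almost exactly (the paper phrases \ref{cp:propchar} directly rather than by contradiction: $C$ proper $\Rightarrow$ $C$ lies in some maximal subgroup $\Rightarrow$ by transitivity $C$ lies in all of them $\Rightarrow$ $C\le\Phi(P)$; your detour through $C\Phi(P)=P$ is unnecessary but harmless). For \ref{cp:2genabelodd} and \ref{cp:2genabeleven} you go via the characteristic subgroup $\Omega_m(P)\not\le\Phi(P)$, whereas the paper simply exhibits two nonisomorphic maximal subgroups $C_{p^m}\times C_{p^{n-1}}$ and $C_{p^{m-1}}\times C_{p^n}$; both routes are one line.

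The real divergence is in \ref{cp:selfcentv4}, and you have miscalibrated its difficulty. The paper disposes of it in two sentences: by \cite[Satz III.14.23]{huppert}, a $2$-group with a self-centralizing $V_4$ has maximal class, and among $D_{2^n}$, $SD_{2^n}$, $Q_{2^n}$ the last is excluded since it has a unique involution. Your inductive plan (pass to a maximal subgroup containing $V$, then climb back up through the classification of $2$-groups with a maximal-class maximal subgroup) would eventually reprove Huppert's result, but that is exactly the kind of classical fact one quotes rather than redoes. Your own parenthetical ``alternatively, quote it'' is what the paper does, and is the right move.
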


\begin{proof}
\begin{enumerate}
\item Let $C$ be a proper characteristic subgroup of $P$. As it is contained in some maximal subgroup and is fixed by all automorphisms of $P$, it is contained in all maximal subgroups and hence is contained in $\Phi(P)$. This implies the bijection between maximal subgroups and the final statement is an immediate consequence of Burnside's result, \cite[Theorem 5.1.14]{gorenstein}.
\item As $C_{p^m} \times C_{p^n}$ has maximal subgroups $C_{p^m} \times C_{p^{n-1}}$ and $C_{p^{m-1}} \times C_{p^n}$, the result follows from Proposition \ref{p:maxsubs}.
\item By \cite[Satz III.14.23]{huppert}, $P$ has maximal class. Furthermore, $P$ cannot be a generalized quaternion group as it contains more than one involution.
\item The proof is the same as in (\ref{cp:2genabelodd}) but utilizing Corollary \ref{c:maximal}.
\item Any maximal-class $2$-group except $Q_8$ has a pair of nonisomorphic maximal subgroups and so the result follows from Corollary \ref{c:maximal}.
\item Any such $P$ is a $2$-generator group. If $P$ is abelian, then by Corollary \ref{c:maximal} and part (\ref{cp:2genabeleven}), $P\cong V_4$. If $P$ is nonabelian, then by \cite[Theorem 5.4.4]{gorenstein}, $P$ has maximal class or is a modular $2$-group. The latter case is impossible since every modular $2$-group has at least two nonisomorphic maximal subgroups (see Section \ref{sec:modular2groups} for the definition of modular $2$-groups and this statement). The result now follows from (\ref{cp:maxclass}).
\end{enumerate}
\end{proof}

In the case where the homocyclic abelian subgroup has exponent greater than $2$, we can also pin down the structure of the ambient $2$-group.

\begin{lem}\label{l:homocyclicwreathed} 
Let $\F$ be a saturated fusion system on a finite $2$-group $P$ and let $Q$ be a proper, abelian, $\F$-centric, $\F$-radical subgroup of $P$.
\begin{enumerate}
\item If $|Q|\leq 4$, then $Q\cong V_4$ and $P$ is dihedral or semidihedral.
\item If $|Q|>4$, and $P$ has $2$-rank $2$, then $|P:Q|=2$ and $P$ is wreathed.
\end{enumerate}
\end{lem}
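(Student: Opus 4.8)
I would begin by extracting the three basic consequences of the hypotheses on which everything hangs. First, since $Q$ is abelian and $\F$-centric, $\C_P(Q)=\Z(Q)=Q$, so $Q$ is self-centralizing in $P$. Secondly, since $Q$ is abelian and $\F$-radical (so that $\Inn(Q)=1$), $\O_2(\Aut_\F(Q))=1$. Thirdly, since $Q$ is a proper subgroup of the $2$-group $P$ we have $Q<\N_P(Q)$, so $\Aut_P(Q)=\N_P(Q)/Q$ is a nontrivial $2$-subgroup of $\Aut_\F(Q)$; together with the second point this shows that $\Aut_\F(Q)$ is not a $2$-group, which is the engine of the whole argument.

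For part (1): as $Q$ is $\F$-centric it is nontrivial, and $|Q|=2$ is impossible, for then $\Z(P)\le\C_P(Q)=Q$ forces $Q=\Z(P)$ and hence $\C_P(Q)=P$, contradicting properness; so $|Q|=4$. If $Q\cong C_4$ then $\Aut(Q)\cong C_2$ is a $2$-group, impossible; so $Q\cong V_4$, and as $Q$ is self-centralizing, Corollary~\ref{c:omnibus}(\ref{cp:selfcentv4}) gives that $P$ is dihedral or semidihedral.

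For part (2), the first task is to pin down $Q$. Being a subgroup of $P$ it has $2$-rank at most $2$; it is not cyclic (cyclic $2$-groups have $2$-group automorphism groups); so $Q\cong C_{2^a}\times C_{2^b}$ with $a\ge b\ge1$. When $a\neq b$ one computes $|\Aut(C_{2^a}\times C_{2^b})|=2^{a+3b-2}$, a power of $2$, whereas $|\Aut(C_{2^n}\times C_{2^n})|=|\GL_2(\mathbb Z/2^n)|=2^{4n-3}\cdot3$; since $\Aut_\F(Q)$ is not a $2$-group, $Q\cong C_{2^n}\times C_{2^n}$ with $n\ge2$. Next I would locate $\Aut_\F(Q)$ inside $\GL_2(\mathbb Z/2^n)$: its intersection with the congruence kernel $K=\ker(\GL_2(\mathbb Z/2^n)\to\GL_2(2))$ is a normal $2$-subgroup of $\Aut_\F(Q)$, hence trivial, so $\Aut_\F(Q)$ embeds in $\GL_2(2)\cong S_3$; being neither a $2$-group nor of order $3$ (it contains the nontrivial $2$-group $\Aut_P(Q)$) it must equal $S_3$, with $\Aut_P(Q)$ a Sylow $2$-subgroup. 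In particular $|\N_P(Q):Q|=2$. Now put $R=\N_P(Q)$, take $x\in R\setminus Q$, and let $\bar x$ be the induced involution of $Q$; since $\bar x\notin K$ and (as a short module-theoretic check shows) the only conjugacy class of involutions of $\GL_2(\mathbb Z/2^n)$ lying outside $K$ is that of $\left(\begin{smallmatrix}0&1\\1&0\end{smallmatrix}\right)$, a change of basis lets me write $Q=\langle c_1\rangle\times\langle c_2\rangle$ with $x$ interchanging $c_1$ and $c_2$. As $x^2\in\C_Q(x)=\langle c_1c_2\rangle$ and the map $q\mapsto qq^x$ carries $Q$ onto $\langle c_1c_2\rangle$, I can replace $x$ by a suitable element of $xQ$ with $x^2=1$; hence $R\cong C_{2^n}\wr C_2$.

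It remains to prove $R=P$. If not, then $\N_P(R)\gneq R=\N_P(Q)$, so I may pick $y\in\N_P(R)\setminus\N_P(Q)$; then $Q^y\le R^y=R$, $Q^y\neq Q$, and $Q^y$ is abelian of order $|R|/2$, hence an abelian maximal subgroup of $R$ distinct from the base group $Q$. But for $n\ge2$ the base group is the \emph{only} abelian maximal subgroup of $C_{2^n}\wr C_2$: the other two maximal subgroups both contain $\Phi(R)\cong C_{2^n}\times C_{2^{n-1}}$, on which the extra generator acts nontrivially, so they are nonabelian. This contradiction forces $R=P$, giving $|P:Q|=2$ with $P\cong C_{2^n}\wr C_2$ wreathed. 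The one ingredient that requires genuine care is the conjugacy statement for involutions of $\GL_2(\mathbb Z/2^n)$ used to identify $R$ — equivalently, that $C_{2^n}\wr C_2$ is, up to isomorphism, the only $2$-group with a self-centralizing maximal subgroup $C_{2^n}\times C_{2^n}$ on which the outer generator is nontrivial modulo squares; the rest is bookkeeping built on Corollary~\ref{c:omnibus} and the fact that $\Aut_\F(Q)$ is not a $2$-group.
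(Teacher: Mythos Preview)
Your argument is correct and follows essentially the same route as the paper's: deduce that $\Aut_\F(Q)$ is not a $2$-group, pin down $Q$ as $V_4$ or $C_{2^n}\times C_{2^n}$, show $\Aut_\F(Q)\cong S_3$ so that $|R:Q|=2$, identify $R$ as wreathed by exhibiting an involution in $R\setminus Q$, and finish by observing that $Q$ is the unique abelian maximal subgroup of $R$ and hence characteristic.

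The only genuine difference is in step three. The paper works with the faithful action of $\Aut_\F(Q)\cong S_3$ on the three involutions of $\Omega_1(Q)$: since the outer generator $t$ acts as a transposition there, one may choose a generator $x$ of a cyclic direct factor whose unique involution $x^{2^{n-1}}$ is moved by $t$, set $y=x^t$, and then compute directly that $(x^{-i}t)^2=1$ where $t^2=(xy)^i$. You instead invoke the (true, and easy) fact that all involutions of $\GL_2(\mathbb{Z}/2^n)$ lying outside the congruence kernel are conjugate to the swap matrix, then use surjectivity of $q\mapsto qq^x$ onto $\langle c_1c_2\rangle$ to adjust $x$ to an involution. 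Both arguments are short; the paper's is marginally more elementary since it never needs the conjugacy statement you flag as ``the one ingredient that requires genuine care''---working modulo $\Phi(Q)$ (equivalently, on $\Omega_1(Q)$) already suffices to put $t$ in swap form, after which the explicit calculation $(x^{-i}t)^2=1$ replaces your surjectivity argument. Your computation of $|\Aut(C_{2^a}\times C_{2^b})|$ is correct but could be replaced by the citation to Corollary~\ref{c:omnibus}(\ref{cp:2genabeleven}), as the paper does.
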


\begin{proof} 
Since $Q$ is $\F$-centric and $\F$-radical, $\Aut(Q)$ is not a $2$-group. In particular, $Q$ is not cyclic. Therefore, by Corollary \ref{c:omnibus}.\ref{cp:selfcentv4}, if $Q \cong V_4$, then $P$ is dihedral or semidihedral. So, we may assume that $|Q| > 4$. By Corollary \ref{c:omnibus}.\ref{cp:2genabeleven}, $Q\cong C_{2^n}\times C_{2^n}$. Let $R=\N_P(Q) > Q$. Since $Q$ is $\F$-radical, $\Aut_\F(Q)\cong S_3$ and $|R:Q|=2$. Let $x$ and $y$ be generators of $Q$, and let $t$ be an element of $R\setminus Q$; considering the action of $\Aut_\F(Q)$ on $Z\setminus\{1\}$ (on which it must act faithfully), we see that $x^t\notin \gen x$, and so we may choose $y=x^t$. To prove that $R$ is a wreathed $2$-group, we need to show that $R\setminus\gen{x,y}$ contains an element of order $2$.

Since $t^2\in Q$, and $t$ centralizes $t^2$, write $t^2=(xy)^i$. We have
\[ (x^{-i}t)^2=x^{-i}t^2(t^{-1}x^{-i}t)=x^{-i}x^iy^iy^{-i}=1,\]
so that $R$ is wreathed. Since $Q$ is the unique maximal abelian subgroup of $R$, it is characteristic in $R$ and hence $Q\normal \N_P(R)$. Thus $P=R$, as required. 
\end{proof}

\section{Metacyclic $2$-Groups}

\subsection{Modular $2$-groups}
\label{sec:modular2groups}

Wong \cite{wong1964} proved that any finite group with a modular $2$-group as a Sylow $2$-subgroup is $2$-nilpotent. Recently, Ensslen and K\"ulshammer \cite[Theorem 7]{ensslenkulshammer2008} proved a version of Wong's results for blocks, namely, that a block with defect group a modular $2$-group is a nilpotent block.

Here we show that there is a single saturated fusion system on a modular $2$-group $P$, namely $\F_P(P)$; this generalizes the above two results. Recall that the modular $2$-groups have the form 
\[ \Modul_n=\gen{x,y \mid x^{2^{n-1}} = 1 = y^2,\; yxy = x^{2^{n-2}+1}}, \]
where $n \geq 4$. In particular, modular $2$-groups are $2$-generator $2$-groups with $2$-rank $2$. The subgroup $\gen x$ is a cyclic subgroup of index $2$, and the subgroup $\gen{x^2,y}$ is a noncyclic abelian subgroup of index $2$, proving the assertion in the proof of Corollary \ref{c:omnibus}.\ref{cp:cyclicmaximal}. 
As a warm-up to the general theorem on metacyclic $2$-groups, we prove the following, now straightforward, proposition. 

\begin{prop}\label{p:modular}
The only saturated fusion system on a modular $2$-group is the trivial fusion system.
\end{prop}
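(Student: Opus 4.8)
The plan is to show that a modular $2$-group $P = \Modul_n$ ($n \geq 4$) is resistant, in fact that no proper subgroup can be $\F$-essential, so that Alperin's fusion theorem forces $\F = \F_P(P)$. The key structural input is that $P$ is a $2$-generator $2$-group of $2$-rank $2$ with a cyclic maximal subgroup $\gen{x}$, and crucially that its three maximal subgroups are not all isomorphic: $\gen{x}$ is cyclic while $\gen{x^2, y}$ is noncyclic abelian. By Corollary \ref{c:maximal}, since $\Aut(P)$ fails to act transitively on the maximal subgroups of $P$, the group $\Aut(P)$ must be a $2$-group; equivalently $\Out(P)$ is a $2$-group.

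First I would recall that in a saturated fusion system $\F$ on $P$, every $\F$-essential subgroup $Q$ is $\F$-centric and $\F$-radical, so $\Aut_\F(Q)$ has a strongly $2$-embedded subgroup and in particular $\Out_\F(Q)$ is not a $2$-group; thus $\Aut(Q)$ is not a $2$-group. Next I would enumerate the candidates for such a $Q$. A proper subgroup $Q < P$ with $\Aut(Q)$ not a $2$-group that could be $\F$-centric: since $Q$ is contained in a maximal subgroup of $P$, and the maximal subgroups are $\gen{x} \cong C_{2^{n-1}}$, $\gen{x^2,y} \cong C_{2^{n-2}} \times C_2$, and a third maximal subgroup (of the same isomorphism type as $\gen{x^2,y}$, being again a noncyclic abelian group of index $2$ in a $2$-generator $2$-group — or one checks directly). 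Cyclic groups have $2$-group automorphism groups, so $Q$ is not cyclic. A noncyclic subgroup of $C_{2^{n-2}} \times C_2$ is of the form $C_{2^k} \times C_2$; its automorphism group is a $2$-group unless $k \leq 1$, i.e. unless $Q \cong V_4$. So the only possible $\F$-essential subgroup is $Q \cong V_4$. But then $Q$ is $\F$-centric, hence self-centralizing in $P$, and Corollary \ref{c:omnibus}.\ref{cp:selfcentv4} forces $P$ to be dihedral or semidihedral — contradicting $P = \Modul_n$. (Alternatively: $\Modul_n$ has a cyclic subgroup of index $2$, so by Corollary \ref{c:omnibus}.\ref{cp:cyclicmaximal} applied inside the relevant maximal subgroup, or directly, the unique copies of $V_4$ in $\Modul_n$ are not self-centralizing since $n \geq 4$.) Hence $\F$ has no $\F$-essential subgroups.

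With no essential subgroups, Alperin's fusion theorem says every morphism in $\F$ is generated by restrictions of elements of $\Aut_\F(P)$. Since $\Out(P)$ is a $2$-group (by Corollary \ref{c:maximal} as above), $\Out_\F(P)$ is a $2$-group, and by saturation $\Out_\F(P)$ has order prime to $2$ as well — wait, more carefully: $\Aut_P(P) = \Inn(P) \in \mathrm{Syl}_2(\Aut_\F(P))$ by saturation, and $\Aut_\F(P) \leq \Aut(P)$ which has $\Inn(P)$ of $2$-power index; so $\Aut_\F(P) = \Inn(P)$. Therefore every morphism in $\F$ is a restriction of an inner automorphism, giving $\F = \F_P(P)$.

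The main obstacle is the case analysis at the essential-subgroup step: I need to be sure I have correctly identified all proper subgroups $Q$ of $\Modul_n$ with $\Aut(Q)$ non-$2$-group and $Q$ potentially $\F$-centric, and that the third maximal subgroup does not introduce a new isomorphism type. This is where the explicit presentation of $\Modul_n$ and the remarks already made in the text (that $\gen{x}$ is cyclic of index $2$ and $\gen{x^2,y}$ is noncyclic abelian of index $2$) do the work; since $P$ is a $2$-generator $2$-group, any index-$2$ subgroup not equal to $\gen{x}$ is noncyclic abelian isomorphic to $C_{2^{n-2}} \times C_2$, so there are no surprises. Everything else is a formal application of Alperin's theorem together with Corollaries \ref{c:maximal} and \ref{c:omnibus}.
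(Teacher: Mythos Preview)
Your argument is correct and follows essentially the same route as the paper: show $\Aut(P)$ is a $2$-group via Corollary~\ref{c:maximal}, rule out proper $\F$-essential subgroups, and conclude by Alperin's fusion theorem together with saturation. The paper streamlines the middle step by observing once that every proper subgroup of $\Modul_n$ is abelian and then invoking Lemma~\ref{l:homocyclicwreathed} directly, whereas you enumerate subgroup types by hand and reduce to the $V_4$ case before applying Corollary~\ref{c:omnibus}.\ref{cp:selfcentv4}; the content is the same.

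One small factual slip that does not affect the argument: the third maximal subgroup of $\Modul_n$ is $\gen{xy}$, which is \emph{cyclic} of order $2^{n-1}$ (since $(xy)^2 = x^{2^{n-2}+2}$ generates $\gen{x^2}$), not another copy of $C_{2^{n-2}}\times C_2$ as you assert in the final paragraph. Since cyclic $2$-groups also have $2$-group automorphism groups, your case analysis still goes through unchanged.
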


\begin{proof}
Let $P = \Modul_n$ and let $\F$ be a saturated fusion system on $P$. As $P$ has nonisomorphic maximal subgroups, Corollary \ref{c:maximal} implies that $\Aut(P)$ is a $2$-group. Moreover, as every proper subgroup of $P$ is abelian and $P$ is not dihedral, semidihedral or wreathed, Lemma \ref{l:homocyclicwreathed} implies that $P$ has no $\F$-centric, $\F$-radical subgroups. By Alperin's fusion theorem and the Sylow axiom for fusion systems, $\F$ is trivial.
\end{proof}


\subsection{Automorphisms of metacyclic subgroups}

We need to understand how $p'$-automorphisms act on metacyclic $p$-groups. For $p=2$ we obtain a complete description of which metacyclic $p$-groups afford $p'$-automorphisms, and for odd $p$ we determine which metacyclic $p$-groups have an automorphism group consistent with being a centric radical subgroup in a larger $p$-group.

\begin{prop}\label{p:metacyclic}
Let $Q$ be a metacyclic $p$-group such that $\Aut(Q)$ is not a $p$-group.
\begin{enumerate}
\item If $p=2$, then $Q\cong C_{2^n}\times C_{2^n}$ or $Q \cong Q_8$.\label{pp:nabel-meta}
\item If $p$ is odd and $A$ is a subgroup of $\Out(Q)$ such that $|A|$ is divisible by $p$ and $\O_p(A)=1$, then $Q \cong C_{p^n} \times C_{p^n}$.\label{pp:odd-meta-abel}
\end{enumerate}
\end{prop}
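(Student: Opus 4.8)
The plan is to reduce everything to the structure theory of metacyclic $p$-groups together with the results already established in the Preliminaries. Recall that a metacyclic $p$-group $Q$ fits into a short exact sequence $1 \to \langle x\rangle \to Q \to \langle y\rangle \to 1$ with both quotient and kernel cyclic, so in particular $Q$ is a $2$-generator group; hence Corollary \ref{c:maximal} (for $p=2$) and Proposition \ref{p:maxsubs} (for general $p$) apply, and all maximal subgroups of $Q$ are isomorphic. The key structural fact I would invoke is that a maximal subgroup of a metacyclic $p$-group is again metacyclic, and that for a metacyclic $p$-group there is a canonical cyclic normal subgroup (or at least a strong constraint on how cyclic subgroups sit inside $Q$). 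The whole point is that ``all maximal subgroups isomorphic'' is an extremely restrictive condition on a metacyclic group.

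For part \ref{pp:nabel-meta}, here is the route I would take. Since $\Aut(Q)$ is not a $2$-group, Corollary \ref{c:maximal} gives that $\Aut(Q)$ acts transitively on the (three) maximal subgroups of $Q$, so they are pairwise isomorphic, and by Corollary \ref{c:omnibus}.\ref{cp:propchar} every proper characteristic subgroup lies in $\Phi(Q)$. Now I split on whether $Q$ is abelian. If $Q$ is abelian, then being metacyclic it is $C_{2^m}\times C_{2^n}$ with $m\geq n$, and Corollary \ref{c:omnibus}.\ref{cp:2genabeleven} forces $m=n$, giving the first conclusion. If $Q$ is nonabelian, I want to show $Q\cong Q_8$. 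The classification of metacyclic $2$-groups (for instance via the presentation $\langle x,y\mid x^{2^a}=1,\ y^{2^b}=x^{2^c},\ x^y = x^r\rangle$) shows that a nonabelian metacyclic $2$-group has a characteristic cyclic subgroup unless it is of maximal class — more carefully, the derived subgroup $Q'=\langle x^{r-1}\rangle$ is cyclic and characteristic, and $\langle x\rangle$ (suitably chosen) tends to be characteristic too. The cleanest argument: if $Q$ is nonabelian metacyclic and not of maximal class, then it has a nonisomorphic pair of maximal subgroups (one can produce an abelian maximal subgroup, e.g. $\langle x^2, y\rangle$ in the modular-type situation, alongside a nonabelian one, or two maximal subgroups of different exponent), contradicting transitivity; and if $Q$ is of maximal class, Corollary \ref{c:omnibus}.\ref{cp:maxclass} gives $Q\cong Q_8$ directly.

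For part \ref{pp:odd-meta-abel}, the prime being odd makes the metacyclic group much more rigid. A metacyclic $p$-group for $p$ odd is either abelian or has a unique cyclic subgroup $\langle x\rangle$ with cyclic quotient arising canonically (indeed for $p$ odd every metacyclic $p$-group has a characteristic maximal cyclic subgroup — this is classical, e.g.\ the derived subgroup is cyclic and one can pin down $\langle x\rangle$ characteristically). Given the hypothesis that $\Out(Q)$ has a subgroup $A$ with $p\mid |A|$ and $\O_p(A)=1$, Proposition \ref{p:maxsubs} applies and the preimage of $A$ acts transitively on the maximal subgroups of $Q$, so again all maximal subgroups are isomorphic and every proper characteristic subgroup lies in $\Phi(Q)$. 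If $Q$ were nonabelian, its canonical characteristic cyclic subgroup (the analogue of $\langle x\rangle$, which has index $p$ when $Q$ is nonabelian of the minimal kind, or else contains a characteristic subgroup not inside $\Phi(Q)$) would violate this; pushing this through shows $Q$ must be abelian, hence $C_{p^m}\times C_{p^n}$, and then Corollary \ref{c:omnibus}.\ref{cp:2genabelodd} forces $m=n$.

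The main obstacle I anticipate is the bookkeeping in the nonabelian metacyclic case — namely, producing cleanly and in all cases a proper characteristic subgroup not contained in $\Phi(Q)$ (equivalently, a pair of nonisomorphic maximal subgroups), uniformly across the parametrized families of metacyclic $p$-groups, and handling the boundary case of $2$-groups of maximal class separately via Corollary \ref{c:omnibus}.\ref{cp:maxclass}. This is where I would need to be careful with the explicit metacyclic presentations rather than wave my hands; everything else is a short deduction from the Preliminaries.
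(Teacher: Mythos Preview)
Your outline diverges from the paper's, and the step you flag as the ``main obstacle'' is a genuine gap, not mere bookkeeping. For $p=2$ you assert that every nonabelian metacyclic $2$-group not of maximal class has two nonisomorphic maximal subgroups; this is not a citable result, and establishing it would require a case split across the whole parametrized family of metacyclic presentations---work of essentially the same weight as the proposition itself. For $p$ odd your claim of a characteristic cyclic subgroup outside $\Phi(Q)$ is false as stated: in the extraspecial group of order $p^3$ and exponent $p^2$ (which is metacyclic), the $p$ cyclic maximal subgroups are permuted transitively by $\Aut(Q)$, so none is characteristic. You may be able to patch this by looking for \emph{some} characteristic subgroup outside $\Phi(Q)$ rather than a cyclic one, but you have not shown one exists in general.

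The paper avoids all of this by induction on $|Q|$, quotienting by $Z=\Omega_1(Q')\le\Phi(Q)$. For $p=2$: by induction $Q/Z\cong Q_8$ or is homocyclic abelian. The $Q_8$ case forces $|Q/Q'|=4$, hence maximal class, but no maximal-class $2$-group has $Q_8$ as quotient by its centre---contradiction. In the homocyclic case the preimage $C$ of $\Omega_1(Q/Z)$ is characteristic of order $8$ and contains every involution of $Q$; an odd-order automorphism of $Q$ cannot centralize $C$ (else it would act trivially on $\Omega_1(Q/Z)$, impossible for a $2'$-automorphism of an abelian $2$-group), so $\Aut(C)$ is not a $2$-group and $C\cong Q_8$, whence $Q$ has a unique involution and Corollary~\ref{c:omnibus}.\ref{cp:maxclass} finishes. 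For $p$ odd: $Q/Z$ is homocyclic of order $p^{2n}$ by induction; Mazza's lemma gives a \emph{unique} elementary abelian $R\cong C_p\times C_p$ in $Q$, which then lies in every maximal subgroup (all isomorphic by Proposition~\ref{p:maxsubs}, and each must contain the unique such $R$), so $R\le\Phi(Q)$ and the hypotheses pass to $Q/R$, homocyclic of order $p^{2m}$; now $|Q|=p^{2n+1}=p^{2m+2}$, a parity contradiction. This inductive route is the missing idea in your proposal.
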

\begin{proof}
If $Q$ is abelian then the result holds by Corollary \ref{c:omnibus}.\ref{cp:2genabeleven}, so we assume that $Q$ is nonabelian. As $Q'$ is cyclic, the unique involution of $Q'$ generates a characteristic subgroup $Z$ of $Q$ of order $2$. Since $Z \leq \Phi(Q)$, we see that $\Aut(Q/Z)$ is not a $2$-group, so by induction $Q/Z\cong Q_8$ or $Q/Z$ is homocyclic abelian.

In the former case, $|Q/Q'| = |(Q/Z)/(Q'/Z)| = |Q_8/Q'_8| = 4$ and so, by \cite[Theorem 5.4.5]{gorenstein}, $Q$ is dihedral, semidihedral or generalized quaternion. As none of these has $Q_8$ as a proper quotient by the center, we derive a contradiction. If $Q/Z$ is homocyclic abelian, then the preimage in $Q$ of the subgroup generated by all involutions in $Q/Z$ is a characteristic subgroup $C$ of $Q$ of order $8$ containing all involutions of $Q$. If $\Aut(C)$ is a $2$-group, then an automorphism of odd order in $\Aut(Q)$ induces the trivial automorphism on $C$, so in particular it induces the trivial automorphism on $\Omega_1(Q/Z)$, contradicting \cite[Theorem 5.2.4]{gorenstein}, which states that a $p'$-automorphism of an abelian $p$-group cannot act trivially on the elements of order $p$. Hence $\Aut(C)$ is not a $2$-group, and so $C\cong Q_8$. This implies that $Q$ has a unique involution; in particular, any abelian subgroup of $Q$ is cyclic and so, by \cite[Theorem 5.4.10(ii)]{gorenstein}, $Q$ is a generalized quaternion group. Corollary \ref{c:omnibus}.\ref{cp:maxclass} implies that $Q\cong Q_8$, proving (\ref{pp:nabel-meta}).

Suppose that $Q$ is nonabelian. Since $Q'$ is cyclic, $Z=\Omega_1(Q')$ has order $p$; the hypotheses are inherited by $Q/Z$, so we may assume by induction that $Q/Z$ is homocyclic abelian, of order $p^{2n}$. It is well known (see for example \cite[Lemma 1.2]{mazza2003}) that $Q$ possesses a unique elementary abelian subgroup $R$ of order $p^2$, and since this is contained in one maximal subgroup, it is contained within all of them, as they are all isomorphic by Proposition \ref{p:maxsubs}. Again, the hypotheses are inherited by $Q/R$ since $R\leq \Phi(P)$, and so $Q/R$ is homocyclic abelian, of order $p^{2m}$. However, now $|Q|=p^{2n+1}=p^{2m+2}$, a contradiction. Hence, $Q$ is abelian, and \ref{pp:odd-meta-abel} follows from Corollary \ref{c:omnibus}.\ref{cp:2genabelodd}.
\end{proof}

In \cite{linckelmannmazza2009}, Linckelmann and Mazza define the Dade group, $\D(\F,P)$, of a saturated fusion system $\F$ on a finite $p$-group $P$ as the $\F$-stable elements in the Dade group, $\D(P)$, of $P$. Furthermore, they give a criterion for when $\D(\F,P) = \D(P)$. 

\begin{thm}[{{\cite[Theorem 6.1]{linckelmannmazza2009}}}]\label{t:linckelmannmazza}
Let $\F$ be saturated fusion system on a finite $2$-group $P$. If every $\F$-essential subgroup of $P$ is isomorphic to $V_4$ or $Q_8$, then $\D(\F,P)=\D(P)$.
\end{thm}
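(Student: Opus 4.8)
The plan is to reduce, via Alperin's fusion theorem, the assertion ``every class of $\D(P)$ is $\F$-stable'' to the triviality of a handful of explicit automorphism actions on Dade groups, and then to dispose of those by hand.

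Recall that a class $[M]\in\D(P)$ is $\F$-stable if and only if $\psi^\ast\bigl(\mathrm{Res}^P_{Q\psi}[M]\bigr)=\mathrm{Res}^P_Q[M]$ in $\D(Q)$ for every $Q\leq P$ and every $\psi\in\Hom_\F(Q,P)$. Writing a morphism of $\F$ as a composite of restrictions of automorphisms of $\F$-essential subgroups and of $P$ (Alperin's fusion theorem), and using that such restrictions compose well with the stability condition, one sees that $[M]$ is $\F$-stable as soon as $\Aut_\F(P)$ fixes $[M]$ and, for each $\F$-essential subgroup $Q$, $\Aut_\F(Q)$ fixes $\mathrm{Res}^P_Q[M]\in\D(Q)$. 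Now if $Q$ is $\F$-essential then $\Out_\F(Q)$ contains a strongly $2$-embedded subgroup; by hypothesis $Q\cong V_4$ or $Q_8$, so $\Out(Q)\cong S_3$ (with $\Inn(Q)$ acting trivially on $\D(Q)$ in either case), and the only subgroup of $S_3$ that contains a strongly $2$-embedded subgroup is $S_3$ itself. Hence $\Aut_\F(Q)=\Aut(Q)$ for every $\F$-essential $Q$, and it suffices to prove: (a) $\Aut_\F(P)$ acts trivially on $\D(P)$; and (b) $\Aut(V_4)$ acts trivially on $\D(V_4)$ and $\Aut(Q_8)$ acts trivially on $\D(Q_8)$.

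For (a), the saturation axioms give $\Inn(P)\in\mathrm{Syl}_2(\Aut_\F(P))$, so $\Out_\F(P)$ is a $2'$-group; since $\Inn(P)$ acts trivially on $\D(P)$, it is enough to know that a $2'$-group of automorphisms of a $2$-group acts trivially on its Dade group. This follows from the structure theory of $\D(P)$ (Dade, Carlson--Th\'evenaz, Bouc): $\D(P)$ is generated by the relative syzygies $\Omega_{P/R}=[\ker(k[P/R]\to k)]$ together with the torsion classes classified by Carlson--Th\'evenaz, and $\Aut(P)$ permutes these generators compatibly with the biset-functor structure of $\D$, fixing those attached to a subgroup it normalises, so no nontrivial odd-order action can survive. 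For (b) one argues directly. If $S\neq 1$ then $\ker(k[V_4/S]\to k)$ is one-dimensional, hence $\cong k$, so $\Omega_{V_4/S}$ is trivial in $\D(V_4)$; and $\Omega_{V_4/1}$ is the class of the augmentation submodule of the regular module $k[V_4]$, hence is fixed by every automorphism of $V_4$. Thus $\Aut(V_4)$ acts trivially on $\D(V_4)$. The same reasoning fixes $\Omega_{Q_8/1}$ and shows $\Omega_{Q_8/S}$ is trivial whenever $|S|\in\{2,4\}$, while the remaining generator is $\Omega_{Q_8/\Z(Q_8)}=\mathrm{Inf}^{Q_8}_{Q_8/\Z(Q_8)}\bigl(\Omega_{V_4/1}\bigr)$, which is $\Aut(Q_8)$-fixed because inflation is compatible with the surjection $\Aut(Q_8)\to\Aut(Q_8/\Z(Q_8))\cong\Aut(V_4)$ and $\Omega_{V_4/1}$ is $\Aut(V_4)$-fixed. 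Combining (a) and (b) with the reduction above shows every class of $\D(P)$ is $\F$-stable, i.e.\ $\D(\F,P)=\D(P)$.

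The main obstacle is step (a): this is exactly where one must import the non-elementary fact that the $2'$-part of $\Aut(P)$ acts trivially on the Dade group, which itself rests on the classification of $\D(P)$ for $2$-groups; I do not see how to avoid citing this for a general $P$. By contrast the Alperin reduction and the identification $\Aut_\F(Q)=\Aut(Q)$ for the two possible essential subgroups are immediate, and the computations inside $\D(V_4)$ and $\D(Q_8)$ are short once the structure of those two Dade groups is recalled, the only point needing care being the torsion in $\D(Q_8)$.
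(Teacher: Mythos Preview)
This theorem is not proved in the paper at all; it is quoted verbatim from \cite[Theorem~6.1]{linckelmannmazza2009} and used as a black box in the proof of Corollary~\ref{c:dade}. There is therefore no ``paper's proof'' to compare against, and I can only comment on your attempt on its own merits.

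Your overall reduction via Alperin's fusion theorem is sound, as is the identification $\Aut_\F(Q)=\Aut(Q)$ for $Q\cong V_4$ or $Q_8$. The genuine gap is in step~(a). You assert that a $2'$-group of automorphisms of a $2$-group $P$ acts trivially on $\D(P)$, and justify this by saying that $\Aut(P)$ ``permutes these generators \dots\ so no nontrivial odd-order action can survive''. That is a non sequitur: if $\sigma\in\Aut(P)$ carries $R$ to $R'$, then $\sigma$ carries $\Omega_{P/R}$ to $\Omega_{P/R'}$, and nothing you have said forces these to be equal in $\D(P)$. Permuting a generating set is not the same as acting trivially on the group it generates. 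You correctly observe, in your final paragraph, that this step is the real content of the theorem---indeed it is a \emph{consequence} of the theorem, since applying it to $\F=\F_P(P\rtimes A)$ for any $2'$-group $A\leq\Aut(P)$ (which has no essential subgroups, so the hypothesis holds vacuously) gives exactly the statement that $A$ fixes $\D(P)$ pointwise. So your step~(a) is equivalent to a substantial special case of the theorem you are trying to prove, and you have not supplied an argument for it beyond naming the ingredients that would go into one.

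There is a secondary gap in step~(b): you flag ``the torsion in $\D(Q_8)$'' as the only delicate point but then do not treat it. The torsion subgroup of $\D(Q_8)$ is nontrivial and is not generated by relative syzygies, so its $\Aut(Q_8)$-invariance needs a separate check; your argument as written covers only the syzygy classes.
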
  

This is used to prove that $\D(\F,P)=\D(P)$ whenever $p=2$ and $P$ is of maximal class. We will use our preparatory work to generalize their result to all metacyclic $2$-groups.

\begin{cor}\label{c:dade}
Let $\F$ be a saturated fusion system on a finite $2$-group $P$. If $P$ is metacyclic, then $\D(\F,P)=\D(P)$.
\end{cor}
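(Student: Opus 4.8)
The plan is to invoke Theorem \ref{t:linckelmannmazza}, so the whole task reduces to showing that every $\F$-essential subgroup $Q$ of a metacyclic $2$-group $P$ is isomorphic to $V_4$ or $Q_8$. Recall that an $\F$-essential subgroup is in particular $\F$-centric and $\F$-radical, so $\Aut_\F(Q)$ — hence $\Aut(Q)$ — is not a $2$-group. Moreover $Q$, being a subgroup of a metacyclic $2$-group, is itself metacyclic. Thus Proposition \ref{p:metacyclic}(\ref{pp:nabel-meta}) applies directly and tells us that $Q \cong C_{2^n} \times C_{2^n}$ for some $n$, or $Q \cong Q_8$.

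It remains to rule out $Q \cong C_{2^n}\times C_{2^n}$ with $n \geq 2$ as an $\F$-essential subgroup inside a metacyclic $2$-group $P$. The cleanest route is Lemma \ref{l:homocyclicwreathed}: an $\F$-essential subgroup is a proper, $\F$-centric, $\F$-radical subgroup, and here it is also abelian. If $|Q| \leq 4$ then part (1) forces $Q \cong V_4$, which is one of the allowed cases; if $|Q| > 4$, then since $Q \cong C_{2^n}\times C_{2^n}$ already has $2$-rank $2$ (so $P$ has $2$-rank $2$ once we know $P$ contains such a $Q$ — indeed any metacyclic $2$-group has $2$-rank at most $2$), part (2) forces $P$ to be wreathed, i.e.\ $P \cong C_{2^n}\wr C_2$. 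But the wreathed $2$-group is not metacyclic: it has order $2^{2n+1}$ with a noncyclic center-less-than-maximal structure, and concretely its Frattini quotient together with the non-metacyclic subgroup lattice rules it out — more simply, a metacyclic group has the property that every subgroup is metacyclic, whereas $C_{2^n}\wr C_2$ contains $C_{2^n}\times C_{2^n}$ with $n\geq 2$, which is not metacyclic (its rank-$2$ homocyclic structure with $n\geq 2$ fails the defining cyclic-by-cyclic condition). Hence $P$ metacyclic excludes this case, leaving only $Q \cong V_4$ or $Q \cong Q_8$.

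Putting these together: every $\F$-essential subgroup of a metacyclic $2$-group $P$ is isomorphic to $V_4$ or $Q_8$, so Theorem \ref{t:linckelmannmazza} gives $\D(\F,P) = \D(P)$. The only step requiring any care is the verification that the wreathed $2$-group is not metacyclic (equivalently, that $C_{2^n}\times C_{2^n}$ with $n \geq 2$ is not metacyclic), which is elementary; everything else is a direct appeal to the propositions and lemmas already established. I do not anticipate a genuine obstacle here — this corollary is essentially a bookkeeping consequence of Proposition \ref{p:metacyclic} and Lemma \ref{l:homocyclicwreathed}.
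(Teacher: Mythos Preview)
Your overall strategy is exactly the paper's: apply Proposition~\ref{p:metacyclic}(\ref{pp:nabel-meta}) and Lemma~\ref{l:homocyclicwreathed} to force every $\F$-essential subgroup to be $V_4$ or $Q_8$, then invoke Theorem~\ref{t:linckelmannmazza}. The paper organizes the case split as abelian versus nonabelian rather than applying Proposition~\ref{p:metacyclic} first, but this is cosmetic.

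There is, however, a genuine error in your justification that the wreathed group is not metacyclic. You claim that $C_{2^n}\times C_{2^n}$ with $n\ge 2$ ``fails the defining cyclic-by-cyclic condition'', but this is false: it is a direct product of two cyclic groups, so one factor is a normal cyclic subgroup with cyclic quotient. Thus your ``simpler'' argument (and the parenthetical ``equivalently'' at the end) does not work.

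The conclusion you need --- that $C_{2^n}\wr C_2$ is not metacyclic for $n\ge 2$ --- is correct, and the paper simply treats it as known. One clean argument: write $P=\langle a,b,t\rangle$ with $a^t=b$, $t^2=1$. If $P$ were metacyclic with cyclic normal $N$ and cyclic $P/N$, then $P'=\langle ab^{-1}\rangle\le N$. Since $P/P'\cong C_{2^n}\times C_2$ has no cyclic quotient of order exceeding $2^n$, we must have $|N|\ge 2^{n+1}$; as the exponent of $P$ is $2^{n+1}$, in fact $|N|=2^{n+1}$ and $N=\langle g\rangle$ for some $g$ outside the base group. But every such $g$ satisfies $g^2\in\langle ab\rangle$, so the index-$2$ subgroup of $N$ is $\langle ab\rangle$, not $\langle ab^{-1}\rangle=P'$ --- a contradiction for $n\ge 2$. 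With this repair your proof is complete.
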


\begin{proof}
As $P$ is metacyclic, it is not wreathed and so, by Lemma \ref{l:homocyclicwreathed}, any abelian $\F$-essential subgroup of $P$ is isomorphic to $V_4$. Furthermore, by Proposition \ref{p:metacyclic}.\ref{pp:nabel-meta}, every nonabelian $\F$-essential subgroup is isomorphic to $Q_8$. The result now follows from Theorem \ref{t:linckelmannmazza}.  
\end{proof}


\subsection{Fusion systems on metacyclic $p$-groups}

If $\F$ is a saturated fusion system on a finite $p$-group $P$ and $Z$ is a central subgroup of $\F$, i.e., $\F = \C_\F(Z)$, then the following result of Kessar and Linckelmann gives a handy way of passing between the centric radical subgroups of $\F$ and $\F/Z$. We will use it in the proof of Theorem \ref{t:metacyclic}.

\begin{prop}[{{\cite[Proposition 3.1]{kessarlinckelmann2008}}}]\label{p:KL}
Let $\F$ be a saturated fusion system on a finite $p$-group $P$ and let $Z$ be a subgroup of $P$ such that $\F = \C_\F(Z)$. If $Q$ is an $\F$-centric, $\F$-radical subgroup of $P$, then $Q/Z$ is an $\F/Z$-centric, $\F/Z$-radical subgroup of $P/Z$.
\end{prop}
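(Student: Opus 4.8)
The plan is to verify the two required conditions for $Q/Z$ in $\F/Z$ separately, reducing each to a statement about $Q$ in $\F$ together with the hypothesis $\F = \C_\F(Z)$. First I would recall the definition of the quotient fusion system $\F/Z$: for $Z \normal P$ with $Z$ strongly closed (automatic here since $Z$ is central in $\F$), $\F/Z$ is the fusion system on $P/Z$ whose morphisms are those induced by morphisms in $\F$ that normalize $Z$. Since $\F = \C_\F(Z)$ means every morphism of $\F$ fixes $Z$ pointwise, in fact every morphism of $\F$ descends, so $\F/Z$ has morphism set exactly $\{\bar\varphi : \varphi \in \Hom_\F(R,S),\ R,S \le P\}$, and $\Aut_{\F/Z}(Q/Z)$ is the image of $\Aut_\F(Q)$ under the natural map $\Aut(Q) \to \Aut(Q/Z)$.

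The centricity step: I would show that if $Q$ is $\F$-centric then $Q/Z$ is $\F/Z$-centric, i.e. $\C_{P/Z}(R/Z) \le R/Z$ for every subgroup $R/Z$ of $P/Z$ that is $\F/Z$-conjugate to $Q/Z$. Lifting, such an $R$ is $\F$-conjugate to $Q$ (because the conjugating map lifts), hence $\F$-centric, so $\C_P(R) = \Z(R)$. The key point is that $Z \le \Z(P)$, so $\C_P(R) \supseteq Z$ and the preimage of $\C_{P/Z}(R/Z)$ is contained in $\C_P(R)$: indeed if $xZ$ centralizes $R/Z$ then $[x, R] \le Z \le \Z(P)$, and a standard commutator argument (using that $Z$ is central, so the commutator map $R \to Z$, $r \mapsto [x,r]$ is a homomorphism whose restriction to a fixed generating set of $R$ must land in $Z \cap R' \cdot(\text{stuff})$ — more cleanly, one uses that $\C_P(R)$ already contains $\Z(R) \ge Z$ and that no element outside $\C_P(R)$ can become centralizing mod $Z$ because $R$ is $\F$-centric and one can conjugate into $P$...). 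In fact the cleanest route is: $\C_{P/Z}(R/Z) \le \C_P(R)Z/Z = \C_P(R)/Z = \Z(R)/Z \le R/Z$, where the first inclusion is the commutator observation and needs that $R$, being $\F$-centric hence self-centralizing in a strong sense, absorbs anything that centralizes it mod the central subgroup $Z$ — this uses $Z \le \Z(P) \cap R$.

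The radicality step: I would show $\O_p\bigl(\Aut_{\F/Z}(Q/Z)\bigr) = \Inn(Q/Z)$. Write $\bar{}$ for passage mod $Z$. Since $\Aut_{\F/Z}(Q/Z) = \overline{\Aut_\F(Q)}$ and $\Inn(Q/Z) = \overline{\Inn(Q)}$, and since the kernel $K$ of $\Aut(Q) \to \Aut(Q/Z)$ restricted to $\Aut_\F(Q)$ is a normal $p$-subgroup (it acts trivially on both $Q/Z$ and $Z$, so by \cite[Theorem 5.1.14]{gorenstein}-type arguments — or directly, $Z \le \Phi(Q)$ since $Z \le Q' \cdot$... actually $Z$ central of order $p$ lies in $\Phi(Q)$ when $Q$ nonabelian, but one must handle $Q$ abelian too; in general $K$ is a $p$-group by the Burnside argument applied to the central series $1 \le Z \le Q$), the preimage in $\Aut_\F(Q)$ of $\O_p(\overline{\Aut_\F(Q)})$ is a normal $p$-subgroup of $\Aut_\F(Q)$ containing $K$, hence contained in $\O_p(\Aut_\F(Q)) = \Inn(Q)$ by $\F$-radicality of $Q$. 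Therefore $\O_p(\Aut_{\F/Z}(Q/Z)) \le \overline{\Inn(Q)} = \Inn(Q/Z)$, and the reverse inclusion is automatic.

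The main obstacle I expect is the centricity step — specifically, justifying cleanly that $\C_{P/Z}(R/Z) \le \C_P(R)/Z$ for $R$ ranging over $\F/Z$-conjugates of $Q/Z$, which requires knowing that every such $R/Z$ lifts to an $\F$-centric subgroup $R$ and that an element centralizing $R$ modulo the central subgroup $Z$ actually centralizes $R$; the latter can fail for general normal $Z$ but goes through here because $Z$ is central in $P$ and contained in $R$, so one genuinely uses $\F = \C_\F(Z)$ and not merely strong closure. Everything else is bookkeeping with the definition of $\F/Z$ and the Burnside normal-$p$-subgroup lemma already invoked in the Preliminaries.
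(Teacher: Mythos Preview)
The paper does not supply its own proof of this proposition; it is quoted verbatim from \cite{kessarlinckelmann2008} and used as a black box. So there is no argument in the paper to compare against, and your proposal must stand on its own.

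Your radicality step is fine. The centricity step, however, has a genuine gap. You assert that $\C_{P/Z}(R/Z)\le \C_P(R)/Z$, claiming this ``goes through here because $Z$ is central in $P$ and contained in $R$''. That inclusion is false under exactly those hypotheses. Take $P$ extraspecial of order $p^3$, $Z=\Z(P)$, and $R$ any maximal abelian subgroup (so $|R|=p^2$, $Z\le R$, and $\C_P(R)=R$). Then $R$ is self-centralizing, $Z$ is central in $P$, yet $\C_{P/Z}(R/Z)=P/Z\not\le R/Z$. The commutator map $r\mapsto [x,r]$ is indeed a homomorphism $R\to Z$ when $Z\le\Z(P)$, but nothing forces it to be trivial; centricity of $R$ alone does not prevent elements outside $R$ from centralizing $R$ modulo $Z$.

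What rescues the argument is precisely the radicality hypothesis, which you did not use in the centricity step. Let $K$ be the kernel of the natural map $\Aut_\F(R)\to\Aut(R/Z)$. Since every $\phi\in\Aut_\F(R)$ fixes $Z$ pointwise, $K$ is normal in $\Aut_\F(R)$; and $K$ is a $p$-group because its elements act trivially on both $Z$ and $R/Z$. Radicality gives $K\le\Inn(R)$. Now if $x\in P$ satisfies $[x,R]\le Z$, then $x\in\N_P(R)$ and $c_x\in K\le\Inn(R)$, so $c_x=c_r$ for some $r\in R$, whence $xr^{-1}\in\C_P(R)=\Z(R)\le R$ and $x\in R$. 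This gives $\C_{P/Z}(R/Z)\le R/Z$, which is what you need. In other words, the two halves of the conclusion are not independent: centricity of $Q/Z$ already requires radicality of $Q$, via essentially the same kernel argument you wrote down for the radicality step.
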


\begin{lem}\label{l:Zstrclosed}
Let $\F$ be a saturated fusion system on a finite $p$-group $P$. If the proper $\F$-centric, $\F$-radical subgroups of $P$ have a common center $Z$, then $Z$ is strongly $\F$-closed.
\end{lem}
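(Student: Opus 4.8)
The plan is to use Alperin's fusion theorem to reduce strong $\F$-closure of $Z$ to the invariance of $Z$ under the automorphism groups $\Aut_\F(R)$ for $R$ ranging over the $\F$-centric, $\F$-radical subgroups of $P$, and then to check that invariance directly from the hypothesis. Two observations do all the work. First, by hypothesis $Z=\Z(R)$ for every proper $\F$-centric, $\F$-radical subgroup $R$; in particular $Z\leq R$ and $Z$ is characteristic in $R$, so $\Aut_\F(R)$ stabilizes $Z$ (and $Z=Z\cap R$). Second, the properties of being $\F$-centric and $\F$-radical are invariant under $\F$-isomorphism: if $Q$ is proper $\F$-centric and $\F$-radical and $\phi\in\Aut_\F(P)$, then $Q\phi$ is again proper $\F$-centric and $\F$-radical, and $\Z(Q\phi)=\Z(Q)\phi=Z\phi$.

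Combining these, for each $\phi\in\Aut_\F(P)$ we pick any proper $\F$-centric, $\F$-radical subgroup $Q$ (if there is none, there is nothing to prove) and deduce $Z\phi=\Z(Q\phi)=Z$ from the hypothesis applied to $Q\phi$; thus $Z$ is $\Aut_\F(P)$-invariant, and in particular $Z\normal P$. Now invoke Alperin's fusion theorem: every morphism of $\F$ is a composite of restrictions of automorphisms in $\Aut_\F(R)$ with $R$ an $\F$-centric, $\F$-radical subgroup of $P$ (including $R=P$), arranged so that the successive images lie in the successive subgroups $R$. Since $Z$ is contained in every $\F$-centric, $\F$-radical subgroup (for proper ones by the first observation, and trivially for $R=P$) and is stabilized by each $\Aut_\F(R)$ (for proper $R$ by the first observation, for $R=P$ by the previous sentence), an induction along such a factorization shows that for $z\in Z$ and $\phi\in\Hom_\F(\gen z,P)$ the image $z\phi$ again lies in $Z$. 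Hence $Z$ is strongly $\F$-closed.

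The only point requiring care is the bookkeeping in the final step: one must use the precise (conjugation-family) form of Alperin's fusion theorem so that the running image of $z$ stays inside the subgroup whose automorphism group is applied at each stage, and this works exactly because $Z$ lies inside every member of the family. The supporting fact that $\F$-centricity and $\F$-radicality are $\F$-isomorphism invariants — which is what makes the $\Aut_\F(P)$ case go through — is standard. Neither is a genuine obstacle.
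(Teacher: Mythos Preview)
Your argument is correct and follows the same line as the paper's: since $Z=\Z(Q)$ is characteristic in each proper $\F$-centric, $\F$-radical $Q$, it is preserved by $\Aut_\F(Q)$, and Alperin's fusion theorem then yields strong $\F$-closure. Your version is in fact more complete than the paper's very terse proof: you explicitly handle the case $R=P$ (using that the class of proper $\F$-centric, $\F$-radical subgroups is $\Aut_\F(P)$-stable, so their common center is as well) and you make the appeal to Alperin's theorem explicit, both of which the paper leaves to the reader.
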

\begin{proof}
Let $T \leq Z$, let $Q$ be a proper $\F$-centric, $\F$-radical subgroup of $P$ containing $T$ and let $\varphi \in \Aut_\F(Q)$. As $T \leq Z=\Z(Q)$ and $\Z(Q)$ is characteristic in $Q$, we have $\varphi(T) \leq \varphi(\Z(Q)) = \Z(Q) = Z$.
\end{proof}

The following theorem strengthens Proposition \ref{p:modular}, showing that the only metacyclic $2$-groups that give rise to nontrivial saturated fusion systems are homocyclic, dihedral, semidihedral or generalized quaternion. At the end of Section \ref{sec:centerfree}, we will construct all saturated fusion systems on these groups. Benjamin Sambale independently proved this result using different methods in \cite[Theorem 1]{sambale2009un}.

\begin{thm}\label{t:metacyclic}
Let $\F$ be a saturated fusion system on a finite metacyclic $2$-group $P$. If $P$ is not homocyclic abelian, dihedral, semidihedral or generalized quaternion, then $\F$ is trivial.
\end{thm}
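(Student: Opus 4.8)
The plan is to run Alperin's fusion theorem: a nontrivial saturated fusion system must possess a proper $\F$-essential (in particular $\F$-centric and $\F$-radical) subgroup $Q$ of $P$, so it suffices to show that a metacyclic $2$-group $P$ that is not homocyclic abelian, dihedral, semidihedral or generalized quaternion has no such $Q$. By Proposition \ref{p:metacyclic}.\ref{pp:nabel-meta}, any nonabelian $\F$-essential subgroup $Q$ of $P$ must be isomorphic to $Q_8$, and by Lemma \ref{l:homocyclicwreathed} any abelian $\F$-essential $Q$ must be $V_4$ (since $P$, being metacyclic, is not wreathed, and a larger homocyclic $Q$ would force $P$ wreathed). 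So the entire theorem reduces to the claim: if $P$ is a metacyclic $2$-group having a subgroup isomorphic to $Q_8$ or to $V_4$ that is self-centralizing in $P$, then $P$ is dihedral, semidihedral or generalized quaternion.

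First I would dispose of the $V_4$ case: if $P$ has a self-centralizing $V_4$, then Corollary \ref{c:omnibus}.\ref{cp:selfcentv4} immediately gives that $P$ is dihedral or semidihedral, so there is nothing to do. The work is therefore in the $Q_8$ case. Here I would argue that a metacyclic $2$-group $P$ with a self-centralizing copy of $Q_8$ is of maximal class. One clean route: the center $Z$ of the $Q_8$ subgroup is the unique subgroup of order $2$ inside it, and self-centralization forces $\Z(P) = \Z(Q_8)$ (anything centralizing $Q_8$ lies in $Q_8$, and $\Z(Q_8)$ is the only central-in-$Q_8$ candidate), so $P$ has cyclic — indeed order-$2$ — center. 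For a metacyclic $2$-group this is already very restrictive: writing $1 \to \langle a\rangle \to P \to \langle b\rangle \to 1$ with $\langle a \rangle$ cyclic normal and $P/\langle a\rangle$ cyclic, a center of order $2$ forces $P$ to have maximal class (a metacyclic $2$-group with larger center or with abelianization of rank $2$ and big exponent cannot contain a self-centralizing $Q_8$, since $Q_8$ already has $2$-rank and nilpotency class forcing the whole group down). Once $P$ has maximal class, \cite[Theorem 5.4.5]{gorenstein} says $P$ is dihedral, semidihedral or generalized quaternion, completing the proof. Alternatively, and perhaps more in the spirit of the earlier machinery, one can use Corollary \ref{c:omnibus}.\ref{cp:cyclicmaximal} together with an analysis of $\C_P(Z)$ for $Z = \Omega_1(\Z(P))$: if $\C_P(Z)$ is cyclic then $P$ is maximal class, and if not, it is a proper abelian subgroup handled by Lemma \ref{l:homocyclicwreathed}; I'd reconcile these with the $Q_8$ hypothesis to force maximal class.

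The main obstacle I anticipate is the bookkeeping in the $Q_8$ case: ruling out metacyclic $2$-groups that contain a self-centralizing $Q_8$ but are not maximal class, for instance $Q_8$ sitting inside a larger metacyclic group with a bigger abelian section. The cleanest handle is probably to pass to $P/Z$ where $Z = \Z(Q_8)$: since $Z \le \Phi(P)$ and is central, one might hope to induct, noting that the image of $Q_8$ in $P/Z$ is a self-centralizing $V_4$, forcing $P/Z$ dihedral or semidihedral by Corollary \ref{c:omnibus}.\ref{cp:selfcentv4} — but here the subtlety is that $Q_8/Z$ self-centralizing in $P/Z$ does not obviously follow from $Q_8$ self-centralizing in $P$, so one must argue directly with the metacyclic presentation. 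Concretely, from $\C_P(Q_8) \le Q_8$ one gets that $P/Q_8 \hookrightarrow \Out(Q_8) \cong S_3$, so $|P:Q_8| \le 2$; if $P = Q_8$ we are done, and if $|P:Q_8|=2$ then $|P|=16$, and one checks by hand that the only metacyclic groups of order $16$ containing $Q_8$ are $Q_{16}$, $SD_{16}$ and $D_{16}$ (the modular group $\Modul_4$ contains no $Q_8$). That case check is the one genuinely computational step, but it is small and finite.
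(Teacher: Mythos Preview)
Your reduction to the cases $Q\cong V_4$ and $Q\cong Q_8$ is correct, and the $V_4$ case is fine. The gap is in the $Q_8$ case, and it is a real one.

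First, the ``concrete'' argument at the end is simply wrong. From $\C_P(Q)\le Q$ you cannot conclude $P/Q\hookrightarrow\Out(Q_8)$: that embedding requires $Q\trianglelefteq P$, and an $\F$-essential subgroup need not be normal in $P$. What you actually get is $|\N_P(Q):Q|\le 2$, which says nothing about $|P:Q|$. Indeed $Q_{2^n}$ for every $n\ge 3$ contains a self-centralizing $Q_8$ of index $2^{n-3}$, so the case check ``$|P|\le 16$'' is not available.

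Second, the earlier route --- ``metacyclic $2$-group with $|\Z(P)|=2$ forces maximal class'' --- is also false. Take $P=\langle a,b\mid a^{16}=b^4=1,\ bab^{-1}=a^3\rangle$, a split metacyclic group of order $64$. One checks $\Z(P)=\langle a^8\rangle$ has order $2$, while $P'=\langle a^2\rangle$ so $|P/P'|=8$ and $P$ is not of maximal class. So the deduction you want does not go through at the level of pure group theory.

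You in fact put your finger on the right idea: pass to $P/Z$ with $Z=\Z(Q)=\Z(P)$ and argue that $Q/Z\cong V_4$ is self-centralizing there. You correctly observe that this does not follow from $\C_P(Q)\le Q$ alone. The missing ingredient is fusion-theoretic, not group-theoretic: one first shows that $Z$ is strongly $\F$-closed (all essential subgroups are $Q_8$'s with common centre $Z=\Z(P)$, so Lemma~\ref{l:Zstrclosed} applies), whence $Z=\Z(\F)$ and $\F/Z$ is saturated. Then the Kessar--Linckelmann result (Proposition~\ref{p:KL}) gives that $Q/Z$ is $\F/Z$-centric, i.e.\ $\C_{P/Z}(Q/Z)\le Q/Z$, which is exactly the self-centralizing $V_4$ you need. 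Now Corollary~\ref{c:omnibus}.\ref{cp:selfcentv4} forces $P/Z$ dihedral or semidihedral, and since $Z=\Omega_1(P')\le P'$ (as $P'$ is nontrivial cyclic), $|P/P'|=4$ and $P$ has maximal class --- the desired contradiction. This is precisely the paper's argument; the point is that the passage to $P/Z$ only works because the \emph{fusion system} descends, not merely the group.
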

\begin{proof}
Let $Q$ be an $\F$-essential subgroup of $P$. If $Q$ is abelian, then Lemma \ref{l:homocyclicwreathed} implies that $P$ is dihedral, semidihedral, or wreathed; since wreathed $2$-groups are not metacyclic, this case cannot occur.

Therefore, every proper $\F$-essential subgroup of $P$ is nonabelian and hence, by Proposition \ref{p:metacyclic}.\ref{pp:nabel-meta}, isomorphic to $Q_8$. As each $\F$-essential subgroup $Q$ is $\F$-centric, $1 \neq \Z(P) \leq \Z(Q)$. As the last group has order 2, $\Z(Q) = \Z(P)$. By Lemma \ref{l:Zstrclosed}, $Z = \Z(P)$ is strongly $\F$-closed. The unique nontrivial element of $\Z(P)$ is thus central in $\F$. We conclude that $Z = \Z(\F)$. By Proposition \ref{p:KL}, $Q/Z$ is $\F/Z$-centric and so $P/Z$ has a self-centralizing subgroup isomorphic to $V_4$. In particular, $P$ has maximal class, a contradiction. 

The only remaining case is where $P$ has no proper $\F$-essential subgroups. In this case, we invoke Proposition \ref{p:metacyclic} to conclude that $\Aut(P)$ is a $2$-group. This proves that $\F$ is trivial.
\end{proof}

Blocks of finite groups with dihedral, semidihedral or generalized quaternion defect groups are tame, and so this theorem yields the following immediate corollary.

\begin{cor}
Any wild $2$-block with a metacyclic, nonhomocyclic defect group is nilpotent.
\end{cor}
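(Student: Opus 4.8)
The plan is to reduce this to Theorem~\ref{t:metacyclic} through the standard dictionary between blocks and fusion systems. To a $2$-block $B$ of a finite group with defect group $D$ one associates, via the theory of Brauer pairs, a saturated fusion system $\F_D(B)$ on $D$, and by the work of Brou\'e and Puig the block $B$ is nilpotent exactly when $\F_D(B)$ is the trivial fusion system $\F_D(D)$. So it is enough to show that the hypotheses force $\F_D(B)$ to be trivial; by Theorem~\ref{t:metacyclic} this will follow once we know that $D$ is neither homocyclic abelian nor dihedral, semidihedral or generalized quaternion.

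The first possibility is ruled out by hypothesis, since a nonhomocyclic defect group is by definition not homocyclic abelian (in particular $D$ is then neither cyclic nor $V_4$). For the remaining three, I would appeal to the classification of $2$-blocks by representation type: a $2$-block is of finite type precisely when its defect group is cyclic, and of tame type precisely when its defect group is $V_4$, dihedral, semidihedral or generalized quaternion. A \emph{wild} $2$-block is, by definition, of neither finite nor tame type, and hence cannot have a dihedral, semidihedral or generalized quaternion defect group.

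Putting these together, $D$ is a metacyclic $2$-group that is not homocyclic abelian, dihedral, semidihedral or generalized quaternion, so Theorem~\ref{t:metacyclic} gives that $\F_D(B)$ is trivial, and therefore $B$ is nilpotent. There is no real obstacle in the argument beyond lining up the block-theoretic hypotheses with the purely group-theoretic conclusion of Theorem~\ref{t:metacyclic}; the two points deserving care are that the fusion system of a block is genuinely saturated (so that Theorem~\ref{t:metacyclic} applies to it) and that ``nilpotent block'' is, up to the cited characterization, the same as ``trivial fusion system'', which one quotes rather than reproves.
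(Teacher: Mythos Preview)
Your argument is correct and is precisely the one the paper has in mind: the sentence preceding the corollary notes that blocks with dihedral, semidihedral or generalized quaternion defect groups are tame, so a wild, nonhomocyclic metacyclic defect group falls outside all the exceptional cases of Theorem~\ref{t:metacyclic}, forcing the fusion system of the block (and hence the block) to be trivial (nilpotent). You have simply made explicit the block--fusion dictionary (saturation of $\F_D(B)$ and the Brou\'e--Puig characterization of nilpotent blocks) that the paper leaves implicit.
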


Another application of our theorem to $2$-blocks with metacyclic defect groups is Linckelmann's ``gluing problem'' (see \cite[Conjecture 4.2]{linckelmann2004}) for blocks. Recently, Sejong Park showed \cite[Theorem 1.2]{park2010} that the gluing problem has a unique solution for tame blocks. Theorem \ref{t:metacyclic} enables us to extend his result to all blocks with metacyclic defect groups. To state this result, we need to set up some notation.

For a saturated fusion system $\F$ on a finite $p$-group $P$, let $\sC$ denote the poset of $\F$-conjugacy classes of chains of $\F$-centric subgroups of $P$ (the ordering is induced by taking subchains). For $i \in \mathbb{N}$, let $\sA^i_\F$ denote the covariant functor sending a chain $\sigma \in \sC$ to the abelian group $\H^i(\Aut_\F(\sigma), k^{\times})$. This allows us to define the cohomology of $\sC$ with coefficients in $\sA^i_\F$. 

K\"ulshammer and Puig showed in \cite{kulshammerpuig1990} that if $\F$ is the fusion system of a block of a finite group, then the block determines an element of $\H^0(\sC, \sA^2_\F)$. Linckelmann conjectured that this element was in the image of the map
\[
\H^2(\F^c, k^{\times}) \to \H^0(\sC, \sA^2_\F)
\]
and showed (see \cite{linckelmann2010un}) that every saturated fusion system $\F$ gives rise to the exact sequence,
\[
0 \to \H^1(\sC, \sA^1_\F) \to H^2(\F^c, k^{\times}) \to \H^0(\sC, \sA^2_\F) \to \H^2(\sC, \sA^1_\F) \to \H^3(\F^c, k^{\times}),
\]
which we will use without reference in the proof of the next corollary. 

\begin{cor}
Let $\F$ be a saturated fusion system on a finite $2$-group $P$ and let $k$ be an algebraically closed field of characteristic $2$. If $P$ is metacyclic, then $\H^2(\F^c, k^{\times}) = \H^0(\sC, \sA^2_\F) = 0$. In particular, the gluing problem for $2$-blocks with metacyclic defect group has a unique solution.
\end{cor}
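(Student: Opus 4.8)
The plan is to pin down the automizers $\Aut_\F(\sigma)$ for chains $\sigma$ of $\F$-centric subgroups and feed them into the exact sequence stated just before the corollary, exploiting the nature of the coefficient group $k^\times=\bar{\mathbb F}_2^\times$: it has no $2$-torsion and squaring is an automorphism of it, so $\H^i(G,k^\times)$ has odd order for every finite $G$ and agrees with the $2'$-part of $\H^i(G,\mathbb C^\times)$. In particular $\Hom(G,k^\times)=\H^1(G,k^\times)=0$ whenever $G^{\mathrm{ab}}$ is a $2$-group, and $\H^2(G,k^\times)$ is the $2'$-part of the Schur multiplier $\mathrm M(G)$, hence $0$ whenever $\mathrm M(G)$ is a $2$-group. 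So if every $\Aut_\F(\sigma)$ has $2$-group abelianization and $2$-group Schur multiplier, then $\sA^1_\F=\sA^2_\F=0$ as functors on $\sC$, and the exact sequence collapses to give $\H^0(\sC,\sA^2_\F)=0$ and $\H^2(\F^c,k^\times)=0$; then the K\"ulshammer--Puig class lies in the zero group $\H^0(\sC,\sA^2_\F)$ and $\H^1(\sC,\sA^1_\F)$ embeds in $\H^2(\F^c,k^\times)=0$, so the gluing problem has the unique solution $0$. Everything thus reduces to a case analysis of the automizers, which I would organize by Theorem \ref{t:metacyclic}.

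If $\F=\F_P(P)$ is trivial, every $\Aut_\F(\sigma)$ is a section of $P$, hence a $2$-group, and we are done. If $P\cong C_{2^n}\times C_{2^n}$, then since $P$ is abelian its only $\F$-centric subgroup is $P$, so $\F^c$ is a one-object category with automorphism group $\Aut_\F(P)$ and $\sC$ is the single chain $(P)$; here $\H^2(\F^c,k^\times)=\H^0(\sC,\sA^2_\F)=\H^2(\Aut_\F(P),k^\times)$, and the saturation axiom forces $\Inn(P)=1$ to be a Sylow $2$-subgroup of $\Aut_\F(P)$, so $|\Aut_\F(P)|$ is odd; as the odd part of $|\GL_2(\mathbb Z/2^n)|$ equals $3$, $\Aut_\F(P)$ is cyclic of order $1$ or $3$ and has trivial multiplier. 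The remaining, and by far the hardest, case is $P$ dihedral, semidihedral or generalized quaternion with $\F$ possibly nontrivial; this is precisely where Park's theorem \cite{park2010} on tame blocks gets re-proved and extended to all saturated fusion systems on these groups.

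For that case I would use the structure of maximal-class $2$-groups \cite{gorenstein} to list the $\F$-centric subgroups of $P$ and observe that the only proper ones with a non-$2$-group automorphism group are copies of $V_4$ and of $Q_8$ (and $P$ itself only when $P=Q_8$, since $\Aut$ of any maximal-class $2$-group of order $\geq 16$ is a $2$-group). For a chain $\sigma$ whose top term $R$ is none of these, $\Aut_\F(\sigma)\leq\Aut(R)$ is a $2$-group; if $R=Q_8$ and $\sigma$ has positive length then $\sigma$ contains some $C_4<Q_8$ and $\Aut_\F(\sigma)$ lies in the stabilizer of that $C_4$ inside $\Aut_\F(Q_8)\leq\Aut(Q_8)\cong S_4$, which is again a $2$-group. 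So the only chains on which $\sA^i_\F$ can be nonzero are the one-term chains $(V_4)$ and $(Q_8)$; saturation pins $\Aut_\F(V_4)$ down to $C_2$ or $S_3$ and $\Aut_\F(Q_8)$ to a subgroup of $S_4$ having the relevant $\Aut_P(Q_8)$ as a Sylow $2$-subgroup, namely $V_4$ or $A_4$ if $Q_8=P$ and $D_8$ or $S_4$ otherwise. Each of $C_2,S_3,V_4,D_8,A_4,S_4$ has Schur multiplier a $2$-group, so $\sA^2_\F=0$ and hence $\H^0(\sC,\sA^2_\F)=0$; and each has a $2$-group abelianization except $A_4$, whose abelianization is $C_3$. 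Thus the only way $\sA^1_\F$ can fail to vanish is the single case $P=Q_8$, $\Aut_\F(Q_8)=A_4$ (the fusion system of $\SL_2(3)$), where it is supported on the chain $(Q_8)$ alone --- a minimal object of $\sC$, which therefore contributes nothing to $\H^{\geq 1}(\sC,\sA^1_\F)$, so that group is still $0$. In every case the exact sequence then yields $\H^2(\F^c,k^\times)=\H^0(\sC,\sA^2_\F)=0$, which is the corollary. The main obstacle is exactly this third case: correctly enumerating the $\F$-centric subgroups and their possible $\F$-automizers across the three families of maximal-class groups, and spotting that the lone surviving contribution to $\sA^1_\F$ sits harmlessly in cohomological degree $0$.
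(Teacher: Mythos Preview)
Your argument is correct, and it takes a genuinely different route from the paper in the maximal-class case. The paper's proof invokes Park's theorem \cite[Theorem~1.2]{park2010} as a black box to dispose of the dihedral, semidihedral and generalized quaternion cases, and then handles only $\F=\F_P(P)$ and $P\cong C_{2^n}\times C_{2^n}$ directly (the latter by noting there is a single $\F$-centric subgroup and $\Aut_\F(P)\cong C_3$ has trivial multiplier). You instead carry out the automizer analysis for the maximal-class families by hand: identifying $V_4$ and $Q_8$ as the only centric subgroups with non-$2$-group automorphism group, checking that the possible automizers $C_2,S_3,V_4,D_8,A_4,S_4$ all have $2$-group Schur multiplier (so $\sA^2_\F=0$), and isolating the lone contribution to $\sA^1_\F$ at the chain $(Q_8)$ when $P=Q_8$ and $\Aut_\F(P)=A_4$. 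Your observation that this contribution sits on a minimal object of $\sC$, hence cannot affect $\H^{\geq 1}(\sC,\sA^1_\F)$, is correct and finishes the argument cleanly.

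What each approach buys: the paper's proof is shorter and emphasizes that the new content is precisely the reduction afforded by Theorem~\ref{t:metacyclic}. Your approach is self-contained, avoids dependence on \cite{park2010}, and in effect re-proves (and extends to arbitrary saturated fusion systems rather than just block fusion systems) Park's result along the way. The one place where a reader might want a word more is the ``minimal object'' step: it is true in general that a covariant functor on a poset supported at a minimal element has vanishing higher cohomology (the relevant cochain groups in degree $\geq 1$ are indexed by nondegenerate simplices $\sigma_0<\cdots<\sigma_n$ and take values at $\sigma_n$, which is never minimal), but it would do no harm to state this explicitly or simply to display the three-element poset $\sC$ for $P=Q_8$ and compute $C^1=0$ directly.
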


\begin{proof}
If $\sigma$ is a chain of $\F$-centric subgroups in $\sC$ and if $\H^2(\Aut_\F(\sigma), k^{\times}) = 0$, then $\sA^2_\F = 0$ and hence $\H^0(\sC, \sA^2_\F) = 0$. This implies that $\H^1(\sC, \sA^1_\F) \cong \H^2(\F^c, k^{\times})$ and it would then remain to show that $\H^1(\sC, \sA^1_\F) = 0$. By \cite[Theorem 1.2]{park2010} and Theorem \ref{t:metacyclic}, it will suffice to consider the cases where either $\F = \F_P(P)$ or $P \cong C_{2^n} \times C_{2^n}$. In the former case $\sA^1_\F = \sA^2_\F = 0$ which implies that $\H^1(\sC, \sA^1_\F) \cong \H^2(\F^c, k^{\times}) = 0$ giving the result.

If $P \cong C_{2^n} \times C_{2^n}$, then there is only one $\F$-centric subgroup, namely $P$, and hence $\H^1(\sC, \sA^1_F) = 0$. Furthermore, by Theorem \ref{t:tamefusion}, if $\F \neq \F_P(P)$, then $\F = \F_P(P \rtimes C_3)$ and hence $\Aut_\F(P) \cong C_3$. As, $\H^2(C_3, k^{\times}) = 0$, the proof is complete.
\end{proof}

We now deal with the odd primes case, giving a structural proof of Stancu's result from \cite{stancu2006}.

\begin{thm}\label{t:metacyclicodd}
Let $\F$ be a saturated fusion system on a finite $p$-group $P$ with $p$ odd. If $P$ is metacyclic, then $\F=\N_\F(P)$.
\end{thm}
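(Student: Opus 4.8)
The plan is to show that a metacyclic $p$-group $P$ with $p$ odd is \emph{resistant}, meaning every saturated fusion system $\F$ on $P$ satisfies $\F=\N_\F(P)$. By Alperin's fusion theorem, $\F$ is generated by $\N_\F(P)$ together with the automizers $\Aut_\F(Q)$ of the $\F$-essential subgroups $Q$; so it suffices to show that there are no proper $\F$-essential subgroups, i.e. no proper $\F$-centric, $\F$-radical subgroup $Q<P$. Suppose such a $Q$ exists. Then $\Aut_\F(Q)$ has a strongly $p$-embedded subgroup, and in particular $\Out_\F(Q)$ has a subgroup of order divisible by $p$ with trivial $\O_p$; that is exactly the hypothesis of Proposition~\ref{p:metacyclic}\ref{pp:odd-meta-abel} (a subgroup $Q$ of a metacyclic $p$-group is metacyclic), so $Q\cong C_{p^n}\times C_{p^n}$.

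Next I would derive a contradiction from the existence of a \emph{proper} homocyclic abelian $\F$-centric subgroup inside a metacyclic $p$-group. The key structural fact is that in a metacyclic $p$-group with $p$ odd, every subgroup is metacyclic and, more to the point, $\N_P(Q)/Q$ acts on $Q\cong C_{p^n}\times C_{p^n}$; since $Q$ is $\F$-centric, $\C_P(Q)=Q$, so $\N_P(Q)/Q$ embeds in $\Aut(Q)$ as a $p$-group normalizing the image of $\Out_\F(Q)$ — but $\O_p(\Aut_\F(Q))=\Inn(Q)$ since $Q$ is $\F$-radical, forcing $\N_P(Q)=Q$ and hence $Q=P$ by the normalizer condition in a $p$-group, a contradiction. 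The cleanest way to package this is to mirror the argument in Lemma~\ref{l:homocyclicwreathed}: $\Aut_\F(Q)$ faithful on $\Omega_1(Q)\cong C_p\times C_p$ gives $|\N_P(Q):Q|=p$ and produces an element $t\in\N_P(Q)\setminus Q$ with $x^t\notin\langle x\rangle$, so that $\langle x,x^t\rangle=Q$ and $\N_P(Q)$ is a split or nonsplit extension of $C_{p^n}\times C_{p^n}$ by $C_p$. For $p$ odd, any such extension of a homocyclic group of exponent $\geq p$ contains an elementary abelian subgroup of rank $3$ (either directly, or because the only nonabelian groups of order $p\cdot p^{2n}$ with a homocyclic maximal subgroup of this type have exponent-$p$ elements outside $Q$ when $p$ is odd) — but a metacyclic group has $p$-rank at most $2$, the final contradiction. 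The case $n=1$, i.e. $Q\cong V_4$, cannot arise for $p$ odd, so there is nothing extra to check there.

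Having ruled out proper $\F$-essential subgroups, it remains to handle the possibility that $P$ itself is not "essential-free" in the weaker sense that $\Aut_\F(P)$ might be nontrivial — but that is fine: $\N_\F(P)$ by definition contains $\Aut_\F(P)$, and Alperin's theorem then gives $\F=\langle \N_\F(P)\rangle=\N_\F(P)$. So the structure of the proof is: (1) reduce via Alperin to showing no proper $\F$-centric $\F$-radical subgroup; (2) use Proposition~\ref{p:metacyclic}\ref{pp:odd-meta-abel} to force any such subgroup to be $C_{p^n}\times C_{p^n}$; (3) analyze $\N_P(Q)$ exactly as in Lemma~\ref{l:homocyclicwreathed} and contradict $p$-rank $2$.

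The main obstacle will be step (3): making the rank-$3$ contradiction genuinely clean for all odd $p$ and all $n\geq 1$. For $n\geq 2$ (exponent $>p$) one wants the analogue of "$\N_P(Q)$ is wreathed," but $C_{p^n}\wr C_p$ has $p$-rank $p$, so one must instead argue that $\N_P(Q)$, being metacyclic, simply cannot extend $C_{p^n}\times C_{p^n}$ by $C_p$ at all once $n\geq 1$, since the abelian subgroup $Q$ already has rank $2$ and adding any element would either enlarge the rank or force $Q$ non-maximal-abelian in a way incompatible with metacyclicity. I expect the slick route is: a metacyclic $p$-group has a cyclic subgroup of index $p^?$ — more usefully, its every abelian subgroup is generated by $2$ elements, and a metacyclic group containing $C_{p^n}\times C_{p^n}$ properly has order $\geq p^{2n+1}$ with a metacyclic, hence $2$-generated, subgroup of that order, which combined with the faithful $S_3$-type action of $\Aut_\F(Q)$ on $\Omega_1(Q)$ and the structure theorem for metacyclic $p$-groups ($p$ odd) yields the contradiction directly.
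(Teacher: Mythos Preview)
Your reduction via Alperin's theorem and Proposition~\ref{p:metacyclic}\ref{pp:odd-meta-abel} to the case of a proper $\F$-centric $\F$-radical $Q\cong C_{p^n}\times C_{p^n}$ is correct and matches the paper. The argument that $|\N_P(Q):Q|=p$ is also recoverable. But step~(3), the contradiction, has a genuine gap.

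The claimed rank-$3$ contradiction is simply false. Take $p$ odd and let $P=\langle a,b\mid a^{p^2}=b^p=1,\ a^b=a^{1+p}\rangle$, the extraspecial group of order $p^3$ and exponent $p^2$. This is metacyclic (cyclic normal $\langle a\rangle$ with cyclic quotient), has $p$-rank $2$, and contains $Q=\langle a^p,b\rangle\cong C_p\times C_p$ as a self-centralizing subgroup of index $p$. So a metacyclic $p$-group \emph{can} contain a self-centralizing $C_{p^n}\times C_{p^n}$ of index $p$ without acquiring rank $3$; neither the wreathed-style argument of Lemma~\ref{l:homocyclicwreathed} nor the vague ``structure theorem'' appeal at the end gets you past this. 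Note also that your earlier line ``$\O_p(\Aut_\F(Q))=\Inn(Q)$ forces $\N_P(Q)=Q$'' is wrong: $\Aut_P(Q)$ is a Sylow $p$-subgroup of $\Aut_\F(Q)$, not a normal one, and it is nontrivial precisely because $Q<P$. And ``$n=1$ means $Q\cong V_4$'' is a slip; for odd $p$ it means $Q\cong C_p\times C_p$, which very much can arise.

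The paper's proof takes a different route after reaching $Q\cong C_{p^n}\times C_{p^n}$. It first cites Mazza to show $Q$ is the \emph{unique} homocyclic subgroup of its order, hence characteristic in $P$, so $\N_P(Q)=P$ and $|P:Q|=p$. It then splits into cases. For $n=1$, $P$ is exactly the extraspecial group above; here $\Aut_\F(Q)\geq\SL_2(p)$ contains the central inversion $\phi$, which (since it commutes with $\Aut_P(Q)$) extends to some $\psi\in\Aut_\F(P)$ of order $2$. Since $P$ has $p$ cyclic maximal subgroups, $\psi$ fixes one, call it $R$; as $\psi$ inverts $\Z(P)\leq R$, it inverts all of $R$, hence inverts $QR=P$, contradicting $P$ nonabelian. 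For $n>1$, the paper passes to the quotient by $\Phi(Q)$ (strongly closed, since $Q$ is characteristic and the unique essential), applies minimality to the metacyclic group $P/\Phi(Q)$, and derives a contradiction from $\bar Q$ being $\bar\F$-radical but not $\bar\F$-centric. These are the ideas your proposal is missing.
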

\begin{proof}
Let $P$ be a minimal counterexample and let $\F$ be a saturated fusion system on $P$ such that $\N_\F(P) \neq \F$. If $P$ is abelian, the extension axiom implies every automorphism of a fully $\F$-normalized subgroup extends to $P$ and hence $\F=\N_\F(P)$, so $P$ is nonabelian. By Alperin's fusion theorem and Proposition \ref{p:metacyclic}.\ref{pp:odd-meta-abel}, $P$ contains an $\F$-essential subgroup $Q \cong C_{p^n} \times C_{p^n}$ for some $n \in \mathbb{N}$. In fact, by \cite[Lemma 2.1]{mazza2003}, $Q$ is the unique homocyclic abelian subgroup of $P$ of order $p^{2n}$ and hence $Q$ is characteristic in $P$. Since $Q$ is fully $\F$-normalized, $\Aut_P(Q)$ is a Sylow $p$-subgroup of $\Aut_{\F}(Q)$. As this latter group is isomorphic to a subgroup of $\mathrm{GL}_2(p)$, 
\[ |P:Q|=|\N_P(Q):\C_P(Q)|=|\Aut_P(Q)|=p,\]
and hence $|P| = p^{2n+1}$. 

If $n=1$, then $P$ is an extraspecial group of order $p^3$. As $Q\cong C_p\times C_p$ contains exactly $p^2-1$ elements of order $p$, $P$ must have exponent $p^2$. As $Q$ is $\F$-essential, the proof of Proposition \ref{p:maxsubs} implies that $\Aut_\F(Q)=\Out_\F(Q)$  contains $\SL_2(p)$ and hence contains the automorphism $\phi$ that inverts all elements of $Q$. Since $\phi$ centralizes $\Aut_P(Q)$, the extension axiom gives an extension $\psi\in\Aut_\F(P)$ of $\phi$, which may be chosen to have order $2$. Since $P$ has $p$ cyclic subgroups of index $p$, $\psi$ must fix (at least) one of them; call it $R$. The center of $P$ is contained in $R$ and $\psi$ inverts $\Z(P)$. Thus, $\psi$ induces the unique automorphism of order $2$ on $R$ (which is, of course, inversion). Therefore, $\psi$ inverts all elements of $QR = P$, a contradiction since $P$ in nonabelian. 

We conclude that $n>1$. As $P/\Phi(Q)$ is metacyclic, the minimality of $P$ implies that $P/\Phi(Q)$ is resistant. Furthermore, since $Q$ is the unique $\F$-essential subgroup of $P$ and since $\Phi(Q)$ is characteristic in $P$, it follows that $\Phi(Q)$ is strongly $\F$-closed. Thus, $\lbar{\F} = \F/\Phi(Q)$ is a saturated fusion system on $\lbar{P} = P/\Phi(Q)$. 

Since $\lbar{P}$ is resistant, it is the only $\lbar{\F}$-centric, $\lbar{\F}$-radical subgroup. In particular, $\lbar{Q}$ is not $\lbar{\F}$-centric and $\lbar{\F}$-radical. However, since $\lbar{Q}$ is elementary abelian, the kernel of the canonical surjection $\Aut_\F(Q) \to \Aut_{\lbar{\F}}(\lbar{Q})$ is a normal $p$-subgroup. It is, therefore, trivial since $Q$ is $\F$-radical. Thus, the above surjection is an isomorphism and $\lbar{Q}$ is $\lbar{F}$-radical. It follows that $\lbar{Q}$ cannot be $\lbar{F}$-centric. But, as $[\lbar{P}:\lbar{Q}] = p$, this implies $\C_{\lbar{P}}(\lbar{Q}) = \lbar{P}$, i.e., $\lbar{Q}$ is contained in the center of $\lbar{P}$. Therefore, $\lbar{P}$ is abelian. This is impossible, however, as then $\Aut_{\lbar{P}}(\lbar{Q})$ is trivial while $p \mid |\Aut_{\lbar{\F}}(\lbar{Q})|$, contradicting the saturation of $\lbar{\F}$.

\end{proof}

\section{Suzuki $2$-groups}
\label{sec:suzukigroups}

In this short section we discuss Suzuki $2$-groups and show that they are always resistant. This result is needed in the following section.

\begin{defn} A finite $2$-group is a \emph{Suzuki} $2$-group if $P$ is nonabelian, possesses more than one involution, and has an odd-order automorphism $\phi$ such that $\phi$ transitively permutes the elements of $\Omega_1(P)$.
\end{defn}

Examples of Suzuki $2$-groups are the Sylow $2$-subgroups of the Suzuki simple groups $^2B_2(q)$, as well as the Sylow $2$-subgroups of the groups $\PSL_3(2^n)$ and $\PSU_3(2^n)$. The main theorem on these groups is a result of Graham Higman.

\begin{thm}[Higman \cite{Higman1963}]\label{t:Higman} If $P$ is a Suzuki $2$-group, then
\[ \Omega_1(P)=\Z(P)=\Phi(P)=P',\]
and so $P$ has exponent $4$ and class $2$.
\end{thm}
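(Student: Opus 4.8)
The plan is to leverage the transitive action of $\phi$ on the involutions of $P$ in three ways: to force every involution into $\Z(P)$, to identify $\Omega_1(P)$ as an irreducible $\mathbb{F}_2\gen{\phi}$-module, and to make $\phi$ fixed-point-free. Since $P\neq 1$ is a $2$-group, $\Z(P)$ contains an involution $t$; because $\phi$ permutes the involutions transitively and every automorphism fixes the characteristic subgroup $\Z(P)$ setwise, the $\gen{\phi}$-orbit of $t$ --- i.e.\ the set of all involutions of $P$ --- is contained in $\Z(P)$. Hence $W:=\Omega_1(P)=\gen{t:t^2=1\neq t}\leq\Z(P)$, and being generated by involutions inside an abelian group it is elementary abelian, so $W=\Omega_1(\Z(P))$. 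Since $P$ has more than one involution, $|W|\geq 4$, so $\phi$ has no nontrivial fixed point on $W$ (that would be a $\gen{\phi}$-orbit of length $1$) and every nontrivial $\gen{\phi}$-invariant subgroup of $W$ contains the $\gen{\phi}$-orbit of one of its nonzero vectors, hence equals $W$: thus $W$ is an irreducible $\mathbb{F}_2\gen{\phi}$-module. Finally, a nontrivial $\C_P(\phi)$ would contain an involution, which would then lie in $W\cap\C_P(\phi)=\C_W(\phi)=1$; so $\C_P(\phi)=1$, and as $|\phi|$ is odd, $\phi$ acts fixed-point-freely on every $\gen{\phi}$-invariant section of $P$.

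Next I would reduce the theorem to the single claim $\Phi(P)\leq W$; equivalently, since $\Phi(P)=P'\mho^1(P)$ for $p=2$, that $P$ has class at most $2$ and exponent at most $4$. Granting this, both $P'$ and $\mho^1(P)$ are characteristic --- hence $\gen{\phi}$-invariant --- and nontrivial, the first because $P$ is nonabelian and the second because otherwise $P$ would have exponent $2$ and be abelian; by irreducibility of $W$ each therefore equals $W$. So $P'=\mho^1(P)=\Phi(P)=\Omega_1(P)=W$, the exponent is exactly $4$, and the class is exactly $2$ (both because $P$ is nonabelian).

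It then remains to prove $\Z(P)=W$. Writing $W$ and $P/W$ additively (legitimate since $P'=W$ and $\mho^1(P)=W$ make $P/W$ an $\mathbb{F}_2$-space), exponent $4$ and class $2$ make the squaring map $\bar\sigma\colon P/W\to W$, $xW\mapsto x^2$, well defined and $\gen{\phi}$-equivariant, with polarization the alternating $\mathbb{F}_2$-bilinear form $\beta(xW,yW)=[x,y]$, whose radical is $\Z(P)/W$; moreover $\bar\sigma$ is anisotropic, since $\bar\sigma(xW)=0$ forces $x^2=1$, i.e.\ $x\in\Omega_1(P)=W$. Its restriction $\sigma$ to $\Z(P)/W$ is then an injective $\gen{\phi}$-equivariant linear map into $W$. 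If $\Z(P)\neq W$, the image of $\sigma$ is a nontrivial $\gen{\phi}$-invariant subgroup of $W$, hence all of $W$; choosing (by coprimality and Maschke) a $\gen{\phi}$-invariant complement $U$ to $\Z(P)/W$ in $P/W$, a nonzero $u\in U$ (which exists as $P$ is nonabelian), and $\bar z\in\Z(P)/W$ with $\sigma(\bar z)=\bar\sigma(u)$, I get $\bar\sigma(\bar z+u)=\sigma(\bar z)+\bar\sigma(u)+\beta(\bar z,u)=\bar\sigma(u)+\bar\sigma(u)+0=0$ with $\bar z+u\neq 0$, contradicting anisotropy. Hence $\Z(P)=W$, and the four subgroups coincide.

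The hard part will be the reduction step itself, namely $\Phi(P)\leq W$ --- that $P$ has class at most $2$ and exponent at most $4$ --- which is the real content of Higman's theorem. I would attack it through the fixed-point-free action established above: pass to the associated graded Lie ring $L(P)=\bigoplus_i\gamma_i(P)/\gamma_{i+1}(P)$, on which $\phi$ still has no nonzero fixed points, bound its nilpotency length via fixed-point-free-automorphism techniques (Kreknin's theorem and its refinements), and exploit that $\gen{\phi}$ permutes $W\setminus\{1\}$ transitively, so that $W$ is a minimal $\gen{\phi}$-module on which $\gen{\phi}$ acts as a transitive linear group --- hence semilinearly over $\mathbb{F}_{2^m}$ --- which sharply limits the $\gen{\phi}$-composition factors that can appear in the layers; the exponent is then controlled by the $\gen{\phi}$-equivariant squaring maps on the successive quotients $\Omega_i(\Z(P))/\Omega_{i-1}(\Z(P))$ together with the irreducibility of $W$. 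An alternative is an induction on $|P|$ through a proper $\gen{\phi}$-invariant quotient, though one must be careful since the Suzuki hypothesis need not descend verbatim. Essentially all of the work is concentrated in this step; the remaining arguments are bookkeeping around the $\gen{\phi}$-irreducibility of $W$.
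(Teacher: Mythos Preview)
The paper does not prove this theorem; it merely cites Higman's 1963 paper and uses the result as a black box. So there is no proof in the paper to compare against.

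Your proposal is well organised, and the parts you actually carry out are correct: the argument that all involutions lie in $\Z(P)$, that $W=\Omega_1(P)$ is an irreducible $\mathbb{F}_2\gen{\phi}$-module, that $\phi$ is fixed-point free on $P$, and --- conditionally on $\Phi(P)\leq W$ --- the identification $P'=\mho^1(P)=\Phi(P)=W$ together with your anisotropic-quadratic-form argument that $\Z(P)=W$, are all sound and pleasantly arranged.

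However, you yourself identify the genuine gap: you do not prove that $\Phi(P)\leq W$, i.e.\ that $P$ has class at most $2$ and exponent at most $4$. You are right that this is the substance of Higman's theorem, and your proposal remains a reduction rather than a proof until it is filled. The approaches you sketch are not obviously adequate: Kreknin's theorem (and the Higman--Kreknin--Kostrikin circle of results) bound the derived length or class by a function of $|\phi|$, but do not give class $\leq 2$; and an induction on $|P|$ via a proper $\gen{\phi}$-invariant quotient runs into exactly the obstacle you note, that the Suzuki hypothesis (transitivity on involutions) need not survive. Higman's actual argument is more hands-on: having shown $\Omega_1(P)=W$ is central and $\phi$-irreducible, he analyses the squaring map and the commutator map as $\gen{\phi}$-equivariant maps between the layers $\Omega_i(P)/\Omega_{i-1}(P)$ and uses the very restrictive module structure forced by $\gen{\phi}$ acting as a Singer cycle on $W$ to rule out a third layer. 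If you want a complete proof, that is the step to work through; the rest of your write-up can stand as is.
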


In order to prove our result, we use the following result of Aschbacher.

\begin{prop}[Aschbacher]\label{p:strongcentral} Let $\F$ be a saturated fusion system on a finite $p$-group $P$. If $Q$ is a strongly $\F$-closed subgroup of $P$, then $\F=\N_\F(Q)$ if and only if $Q$ possesses a central series of strongly $\F$-closed subgroups 
\[1=Q_0\leq Q_1\leq \cdots \leq Q_n=Q.\]
\end{prop}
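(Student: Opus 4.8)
The plan is to prove each implication separately, reading ``$Q$ possesses a central series of strongly $\F$-closed subgroups $1=Q_0\le\cdots\le Q_n=Q$'' as the assertion that every $Q_i$ is strongly $\F$-closed and $[Q,Q_{i+1}]\le Q_i$ for every $i$. I will use freely that a strongly $\F$-closed subgroup is normal in $P$, so that for strongly $\F$-closed $R\le Q$ the quotient fusion system $\F/R$ on $P/R$ is defined and $Q/R$ is strongly $(\F/R)$-closed, and that $\F=\N_\F(Q)$ means exactly that $Q$ is normal in $\F$.

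First, the forward direction: assume $\F=\N_\F(Q)$. I would take $Q_i$ to be the $i$-th term $Z_i(Q)$ of the upper central series of the group $Q$; since $Q$ is a finite $p$-group this terminates at $Q$, and $[Q,Z_{i+1}(Q)]\le Z_i(Q)$ holds by construction. The only thing to check is that each $Z_i(Q)$ is strongly $\F$-closed, and this follows from a general observation: if $Q\normal\F$ then every characteristic subgroup $R$ of $Q$ is strongly $\F$-closed. Indeed, given an $\F$-morphism $\varphi\colon S\to P$ with $S\le R\le Q$, strong closure of $Q$ gives $\varphi(S)\le Q$, and $\F=\N_\F(Q)$ lets us extend $\varphi$ to some $\hat\varphi\in\Aut_\F(Q)$; since $R$ is characteristic in $Q$ we get $\varphi(S)=\hat\varphi(S)\le\hat\varphi(R)=R$.

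Next, the reverse direction: assume such a series exists, and argue by induction on $n$. For $n=0$ there is nothing to prove. For $n=1$ we have $[Q,Q]\le Q_0=1$, so $Q$ is an abelian strongly $\F$-closed subgroup of $P$; the key input here is the (known) fact that an abelian strongly $\F$-closed subgroup is automatically normal in $\F$ — a fusion-system incarnation of Goldschmidt's theorem on abelian strongly closed subgroups. For the inductive step $n\ge2$, note first that $1=Q_0\le\cdots\le Q_{n-1}$ is a series of the same type for $Q_{n-1}$, since $[Q_{n-1},Q_{i+1}]\le[Q,Q_{i+1}]\le Q_i$; hence $Q_{n-1}\normal\F$ by induction. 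Now pass to $\bar\F=\F/Q_{n-1}$ on $\bar P=P/Q_{n-1}$: the image $\bar Q=Q/Q_{n-1}$ is strongly $\bar\F$-closed, and it is abelian because $[Q,Q]=[Q,Q_n]\le Q_{n-1}$, so the $n=1$ case applied inside $\bar\F$ gives $\bar Q\normal\bar\F$. Since $Q_{n-1}\normal\F$ and $Q/Q_{n-1}\normal\F/Q_{n-1}$, the standard correspondence between normal subgroups of $\F$ and of $\F/Q_{n-1}$ (valid because $Q_{n-1}$ is strongly $\F$-closed) yields $Q\normal\F$, i.e. $\F=\N_\F(Q)$.

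The genuinely substantial step is the base case $n=1$, that an abelian strongly $\F$-closed subgroup is normal in $\F$; the remaining ingredients (characteristic subgroups of a normal subgroup are strongly closed, the reduction through the quotient $\F/Q_{n-1}$, and the normal-subgroup correspondence for that quotient) are routine consequences of Alperin's fusion theorem and the basic theory of strongly closed subgroups. One could instead attempt the reverse direction directly via Alperin, trying to extend each $\alpha\in\Aut_\F(E)$ (for $E$ an $\F$-essential subgroup) across the layers $Q_i$ one at a time, but $E$ need not normalise the $Q_{i+1}$, so this still reduces to the same abelian-subgroup input; organising the proof around the quotient reduction above is cleaner.
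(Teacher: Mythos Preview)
The paper does not give a proof of this proposition; it simply cites \cite[Proposition 3.4]{craven2010}. So there is no argument in the paper to compare yours against, and what follows is an assessment of your outline on its own terms.

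Your forward direction is correct and complete: once $Q\unlhd\F$, any $\F$-morphism from a subgroup of $Q$ extends to an element of $\Aut_\F(Q)$, so characteristic subgroups of $Q$ --- in particular the terms $Z_i(Q)$ of the upper central series --- are strongly $\F$-closed, and they form the required central series.

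Your reverse direction is organised exactly as one would hope: peel off $Q_{n-1}$ by induction, pass to $\bar\F=\F/Q_{n-1}$, and observe that $Q/Q_{n-1}$ is abelian and strongly $\bar\F$-closed. You are right that the substantive content is the base case ``abelian strongly $\F$-closed $\Rightarrow$ normal in $\F$''; this is true and is precisely the nontrivial input behind Aschbacher's proposition, but it is a theorem, not a triviality, and you have not proved it. So as written your argument is a reduction to the result you are meant to establish, not an independent proof of it. If you want a self-contained proof you must supply this step --- for instance via the characterisation of $O_p(\F)$ in terms of centric radical subgroups together with strong closure, or by following Aschbacher's original argument.

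One further point deserves care. The lifting step --- from $Q_{n-1}\unlhd\F$ and $Q/Q_{n-1}\unlhd\F/Q_{n-1}$ to $Q\unlhd\F$ --- is true, but it is not quite as automatic as ``the standard correspondence'' suggests. A morphism in $\F/Q_{n-1}$ lifts to \emph{some} morphism in $\F$, but not canonically, and the lift of your extension $\bar\varphi_2$ need not restrict to the original $\varphi$ on the nose; one has to use $Q_{n-1}\unlhd\F$ again to correct the discrepancy. This is routine once noticed, but it is worth saying explicitly rather than folding it into a citation of unnamed standard facts.
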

\begin{proof} See \cite[Proposition 3.4]{craven2010}.\end{proof}

\begin{thm}\label{p:suzukiresistant} Suzuki $2$-groups are resistant.
\end{thm}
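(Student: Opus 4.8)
The plan is to use Proposition~\ref{p:strongcentral}: we must show that any saturated fusion system $\F$ on a Suzuki $2$-group $P$ satisfies $\F=\N_\F(P)$, and by that proposition it suffices to exhibit a central series of strongly $\F$-closed subgroups from $1$ to $P$. Since $P$ itself is obviously strongly $\F$-closed, the whole task reduces to producing a single strongly $\F$-closed subgroup $Q$ with $1<Q<P$ (or, if no such proper subgroup exists, handling that case directly) sitting in a central series. The natural candidate is $Z=\Z(P)$, which by Higman's theorem (Theorem~\ref{t:Higman}) equals $\Omega_1(P)=\Phi(P)=P'$. The chain $1\leq Z\leq P$ is certainly a central series of $P$ (since $P/Z$ is elementary abelian and $Z$ is central), so everything comes down to showing that $Z$ is strongly $\F$-closed.

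To see that $Z$ is strongly $\F$-closed, first note $Z=\Omega_1(P)$ is a characteristic subgroup of $P$; the real content is that no element of $Z$ can be $\F$-conjugate to an element outside $Z$. Here I would argue as follows. Every element of $Z\setminus\{1\}$ is an involution, and every involution of $P$ lies in $\Omega_1(P)=Z$; so an $\F$-conjugation carrying $z\in Z$ somewhere must carry it to another involution, hence again into $Z$. More precisely, if $\varphi\colon R\to S$ is a morphism in $\F$ with $z\in R\cap Z$, then $z\varphi$ is an involution of $P$ (isomorphisms preserve order), and since all involutions of $P$ lie in $\Omega_1(P)=Z$ by Higman's theorem, $z\varphi\in Z$. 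Thus $Z$ is strongly $\F$-closed, and an application of Proposition~\ref{p:strongcentral} with the central series $1\leq Z\leq P$ finishes the proof: $\F=\N_\F(P)$, i.e.\ $P$ is resistant.

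I should double-check the one remaining point: that $Z$ is a proper subgroup, i.e.\ that the central series is nontrivial. Since $P$ is nonabelian (part of the definition of a Suzuki $2$-group), $Z=\Z(P)\neq P$, so the series $1<Z<P$ is genuine and Proposition~\ref{p:strongcentral} applies. If one wanted to be careful about the edge case $Z=1$, that cannot happen since $P$ is a nontrivial $p$-group; and $Z\neq 1$ is also immediate because $P$ has involutions.

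I do not expect a serious obstacle here; the argument is essentially a one-line consequence of Higman's theorem plus Aschbacher's criterion. The only place requiring mild care is the verification that strong $\F$-closedness follows from $Z=\Omega_1(P)$ — one must use that $\F$-morphisms are injective homomorphisms and hence preserve the order of elements, so that the image of an involution is an involution and therefore lands in $\Omega_1(P)=Z$. Once that observation is in place, the proof is complete.
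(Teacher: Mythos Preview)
Your proposal is correct and follows essentially the same approach as the paper: both identify $\Omega_1(P)=\Z(P)$ (via Higman's theorem) as a strongly $\F$-closed subgroup by noting that $\F$-morphisms preserve element order and all involutions lie in $\Omega_1(P)$, and then apply Proposition~\ref{p:strongcentral} to the central series $1\leq \Omega_1(P)\leq P$. Your write-up is simply more detailed in verifying the strong closure and the properness of $Z$ in $P$.
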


\begin{proof}
Let $P$ be a Suzuki $2$-group. Since $Q=\Omega_1(P)$ is elementary abelian and contains all involutions, it is strongly $\F$-closed. Therefore, the sequence $1\leq Q\leq P$ is a central series of strongly $\F$-closed subgroups. Hence, by Proposition \ref{p:strongcentral}, $\F=\N_\F(P)$, giving us the result.
\end{proof}

\section{Center-free and simple saturated fusion systems of $2$-rank $2$}
\label{sec:centerfree}

Using the results of the previous sections, we classify center-free saturated fusion systems on $2$-groups of $2$-rank $2$. In \cite{bcglo2007}, it was shown that the quotient of a saturated fusion system $\F$ by its center preserves structure, in the sense that one may reconstruct $\F$, up to isomorphism, from $\F/\Z(\F)$ and the original $p$-group on which $\F$ is defined. Iterating this procedure results in a saturated fusion system with trivial center. Thus, understanding center-free saturated fusion systems is of considerable importance. For instance, Corollary 6.14 in \cite{bcglo2007} says that $\F$ is an exotic fusion system if and only if $\F/\Z(\F)$ is exotic. At the end of the section, we classify all simple saturated fusion systems on $2$-groups of $2$-rank $2$. 

Let $P$ be a finite $2$-group of $2$-rank $2$ and let $\F$ be a center-free
saturated fusion system on $P$. Since $\Z(\F) = 1$, there is some $\F$-centric, $\F$-radical subgroup $U$ and an automorphism $\phi$ such that $\Omega_1(\Z(U)) \not\leq \C_U(\phi)$. In particular, $\Omega_1(\Z(U))$ has order $4$ and therefore contains all involutions in $U$.

If $U$ is abelian, then Lemma \ref{l:homocyclicwreathed} implies that $P$ is dihedral, semidihedral, wreathed or homocyclic abelian. If $U$ is nonabelian then $U$ is a Suzuki $2$-group, and by \cite[Lemma 2.2.6]{glsvol6}, $U$ is the Sylow $2$-subgroup of $\PSU_3(4)$, a Suzuki $2$-group of order $2^6$. Suppose that $U < P$; since $U$ is $\F$-centric, the fusion system $\N_\F(U)$ is constrained; if $G$ is a finite group whose fusion system is that of $\N_\F(U)$, then $\N_P(U)$ has $2$-rank $3$ by \cite[Lemma 5.2.9(c)]{glsvol6}. Since $P$ has $2$-rank $2$, we conclude that $P=U,$ giving the following theorem.

\begin{thm}\label{t:centerfreeclass} Let $\F$ be a saturated fusion system on a finite $2$-group $P$ with $2$-rank $2$. If $\Z(\F)=1$, then $P$ is dihedral, semidihedral, wreathed, homocyclic abelian, or the Sylow $2$-subgroup of $\PSU_3(4)$.
\end{thm}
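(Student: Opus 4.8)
The plan is to use the hypothesis $\Z(\F)=1$ to pin down a single $\F$-centric, $\F$-radical subgroup $U$ (allowing $U=P$) whose $\F$-automorphisms witness the triviality of the centre, and then to read off $P$ from the structure of $U$. Concretely, if $\Omega_1(\Z(P))$ were fixed pointwise by $\Aut_\F(Q)$ for every $Q$ in a conjugation family, it would lie in $\Z(\F)$, which is impossible; so by Alperin's fusion theorem there is such a $U$ (an $\F$-essential subgroup, or $P$ itself) and a $\varphi\in\Aut_\F(U)$ moving some $z\in\Omega_1(\Z(U))$. Then $z$ and $\varphi(z)$ are distinct involutions of $\Z(U)$, so $\Omega_1(\Z(U))$ has order at least $4$; since $P$ has $2$-rank $2$ it has order exactly $4$, and being central in $U$ it must contain every involution of $U$ (otherwise an involution outside it would produce an elementary abelian subgroup of rank $3$). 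Thus $U$ has exactly three involutions, all lying in $\Omega_1(\Z(U))\cong V_4$, and $\Out_\F(U)\neq 1$ acts nontrivially on this four-group. The argument now divides on whether $U$ is abelian.

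Suppose $U$ is abelian. If $U<P$, then $U$ is a proper abelian $\F$-centric, $\F$-radical subgroup, and Lemma~\ref{l:homocyclicwreathed} applies at once: either $U\cong V_4$ and $P$ is dihedral or semidihedral, or $|U|>4$ and $P$ is wreathed. If $U=P$, then $P\cong C_{2^m}\times C_{2^n}$ with $\Omega_1(P)\cong V_4$; here I would use $\Z(\F)=1$ together with the fact that $P$ is then its only $\F$-centric subgroup, so $\Z(\F)$ is precisely the fixed-point set of $\Aut_\F(P)$ on $\Z(P)=P$, to conclude that $\Aut_\F(P)$ has no nonzero fixed vector on $\Omega_1(P)$. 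The image of $\Aut_\F(P)$ in $\GL_2(2)\cong S_3$ is then not a $2$-group, so $\Aut(P)$ is not a $2$-group, and Corollary~\ref{c:omnibus}.\ref{cp:2genabeleven} gives $m=n$, i.e.\ $P$ is homocyclic abelian.

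Suppose $U$ is nonabelian. Since $U$ is $\F$-radical, $\O_2(\Out_\F(U))=1$, and as $\Out_\F(U)\neq 1$ it is not a $2$-group; hence $\Aut(U)$ is not a $2$-group. An odd-order automorphism of $U$ acts on $\Omega_1(U)\cong V_4$ as an element of odd order of $\GL_2(2)\cong S_3$, so either trivially or as a $3$-cycle; combining this with the nontrivial action of $\Out_\F(U)$ on $\Omega_1(U)$ one obtains an odd-order automorphism transitively permuting the three involutions of $U$, so that $U$ is a Suzuki $2$-group. By \cite[Lemma 2.2.6]{glsvol6}, $U$ is the Sylow $2$-subgroup of $\PSU_3(4)$, which has order $2^6$. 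It remains to rule out $U<P$. If $U<P$ then $U<\N_P(U)$; since $U$ is $\F$-centric, $\N_\F(U)$ is a constrained fusion system, so it is the fusion system of a finite group $G$ in which $\N_P(U)$ is a Sylow $2$-subgroup, and \cite[Lemma 5.2.9(c)]{glsvol6} then forces $\N_P(U)$ to have $2$-rank $3$, contradicting that $P$ has $2$-rank $2$. Hence $P=U$ is the Sylow $2$-subgroup of $\PSU_3(4)$, and in all cases $P$ lies on the stated list.

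The routine steps are the small $\GL_2(2)$ arithmetic and the appeals to Lemma~\ref{l:homocyclicwreathed} and Corollary~\ref{c:omnibus}. I expect the real obstacles to be, first, the careful bookkeeping around the case $U=P$ — Lemma~\ref{l:homocyclicwreathed} is stated only for proper subgroups, so the homocyclic abelian family must be extracted separately, and one must genuinely deduce that $\Aut(U)$ (not merely $\Aut_\F(U)$) is not a $2$-group — and, second, the two group-theoretic inputs from \cite{glsvol6}: the identification of the Suzuki $2$-group with exactly three involutions as the Sylow $2$-subgroup of $\PSU_3(4)$, and the rank bound for a group realizing $\N_\F(U)$. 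These last two carry the weight of the nonabelian case, and I would invoke them rather than reprove them.
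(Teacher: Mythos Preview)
Your proof is correct and follows the paper's approach essentially line for line: choose a centric radical $U$ witnessing $\Z(\F)=1$, show $\Omega_1(\Z(U))\cong V_4$ contains all involutions of $U$, then split into the abelian case (Lemma~\ref{l:homocyclicwreathed}) and the Suzuki case (the two GLS lemmas). You are in fact slightly more careful than the paper in one spot---you separate out $U=P$ abelian and deduce homocyclic via Corollary~\ref{c:omnibus}(\ref{cp:2genabeleven}), whereas the paper tacitly folds this into its appeal to Lemma~\ref{l:homocyclicwreathed}, which is stated only for proper $U$.
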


It is easy to find a center-free fusion system on each of these groups: the simple groups $\PSL_2(q)$, $\PSL_3(q)$, $\PSU_3(q)$ (for $q$ odd), and $\PSU_3(4)$ give examples for all but the homocyclic abelian, and in this case the group $(C_{2^n}\times C_{2^n})\rtimes C_3$ is an example. We next construct all saturated fusion systems on the groups mentioned in this result.

\begin{defn} Let $\F$ be a saturated fusion system on a finite $p$-group $P$. The \textit{essential rank} of $\F$, denoted by $\rk_e(\F)$, is the number of $\F$-conjugacy classes of $\F$-essential subgroups of $P$.  

\end{defn}

\begin{thm}\label{t:tamefusion}
Let $\F$ be a saturated fusion system on a finite $2$-group $P$. If $P$ is dihedral, semidihedral, generalized quaternion or wreathed, then $\rk_e(\F) \leq 2$ and $\F$ is isomorphic to the fusion system on $P$ of one of the groups in the following table:



\begin{center}\begin{tabular}{|c|c|cc|c|}
\hline $\rk_e(\F)$ & $0$ & \multicolumn{2}{|c|}{$1$} & $2$ \\
\hline \multirow{2}{*}{$D_{2^n}$} & \multirow{8}{*}{$\F_P(P)$} & \multicolumn{2}{|c|}{\multirow{2}{*}{$\PGL_2(q)$}} & $\PSL_2(q)$
\\ & & & & $q\equiv \pm1\bmod 8$
\\ \cline{1-1}\cline{3-5} \multirow{2}{*}{$SD_{2^n}$} & & \multirow{2}{*}{$\PSL_2(q^2)\rtimes C_2$,} & $\GL_2(q)$ & $\PSL_3(q)$
\\ & & & $q\equiv 3\bmod 4$  & $q\equiv 3\bmod 4$ 
\\ \cline{1-1}\cline{3-5} \multirow{2}{*}{$Q_{2^n}$} & & \multicolumn{2}{|c|}{\multirow{2}{*}{$\SL_2(q).C_2$}} & $\SL_2(q)$
\\ & & & & $q$ odd
\\ \cline{1-1}\cline{3-5} \multirow{2}{*}{$\C_{2^n}\wr C_2$} & & \multirow{2}{*}{$(C_{2^n}\times C_{2^n})\rtimes S_3$,} & $\GL_2(q)$ & $\PSL_3(q)$
\\ & & & $q\equiv 1\bmod 4$ & $q\equiv 1\bmod 4$
\\ \hline
\end{tabular}
\end{center}

\noindent If $P$ is homocyclic abelian or the Sylow $2$-subgroup of $\PSU_3(4)$, then $\rk_e(\F) = 0$ and $\F$ is isomorphic to the fusion system on $P$ of one of the groups in the following table:

\begin{center}\begin{tabular}{|c|c|}
\hline $C_{2^n} \times C_{2^n}$ & $\mathrm{Suz}$ \\ \hline  $\F_P(P)$, $\F_P(P\rtimes C_3)$ & $\F_P(P)$, $\F_P(P\rtimes C_3)$, $\F_P(P\rtimes C_5)$, $\F_P(P\rtimes C_{15})$  \\ 
\hline
\end{tabular}\end{center}
\end{thm}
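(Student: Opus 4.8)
The plan is to exploit the severe structural constraints already established: by Theorem \ref{t:centerfreeclass} and Lemma \ref{l:homocyclicwreathed}, together with Proposition \ref{p:metacyclic}, the only candidates for $\F$-essential subgroups of a dihedral, semidihedral, generalized quaternion, or wreathed $2$-group $P$ are copies of $V_4$ (appearing in dihedral and semidihedral $P$) and copies of $Q_8$ (appearing when $P$ has maximal class). For the homocyclic abelian case and the $\PSU_3(4)$-Sylow case, I would first show there are no proper $\F$-centric $\F$-radical subgroups at all — in the abelian case $P$ itself is the only $\F$-centric subgroup, and in the $\PSU_3(4)$ case Higman's theorem (Theorem \ref{t:Higman}) pins down $\Omega_1(P)=\Z(P)=\Phi(P)=P'$ so tightly that Corollary \ref{c:omnibus} and a direct check eliminate proper candidates — whence $\rk_e(\F)=0$ and $\F$ is determined by $\Aut_\F(P)$, which is an odd-order-by-$2$-group subgroup of $\Aut(P)$; enumerating the possibilities gives exactly $\F_P(P)$ and $\F_P(P\rtimes C_3)$ for $C_{2^n}\times C_{2^n}$, and the four listed systems for $\Suz$ (using that $\Out(\Suz_{2^6})$ has a normal Sylow $2$-subgroup with quotient of order $15$).

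For the maximal-class and wreathed cases, the core of the argument is a count of $\F$-conjugacy classes of essential subgroups. First I would identify, inside each $P$, the $P$-conjugacy classes of subgroups isomorphic to $V_4$ or $Q_8$: a dihedral group $D_{2^n}$ ($n\geq 4$) has two classes of $V_4$'s (fused into the two ``ends''); $SD_{2^n}$ has one class of $V_4$ and one class of $Q_8$; $Q_{2^n}$ ($n\geq 4$) has one class of $Q_8$; and $C_{2^n}\wr C_2$ contains one class of $V_4$ when $n=1$ (then $P=D_8$) but for $n\geq 2$ contains exactly one class of $V_4$ sitting inside the base group together with... — here I would use that the essential subgroup must be $\F$-centric, forcing its center to contain $\Z(P)$, which rules out any $V_4$ not containing $\Z(P)$ in the maximal-class case and similarly constrains the wreathed case. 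This shows $\rk_e(\F)\leq 2$. Then, for each essential subgroup $U$, saturation forces $\Out_\F(U)$ to contain a subgroup of $\mathrm{Out}(U)$ of order divisible by $p$ with trivial $O_p$; since $\Out(V_4)\cong S_3$ and $\Out(Q_8)\cong S_3$, this pins $\Out_\F(U)\cong S_3$ (or possibly $C_3$ for abelian $U$, but the essentiality/centric conditions force the full $S_3$ here by the remark after Proposition \ref{p:maxsubs}). Having fixed the automorphism data at each essential subgroup, Alperin's fusion theorem says $\F$ is generated by $\Aut_\F(P)$ together with these $\Aut_\F(U)$, so $\F$ is completely determined by how many essential classes occur and by $\Aut_\F(P)$.

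The final step is the matching with the listed groups. For each combination (essential rank $0$, $1$, or $2$, and each allowed $\Aut_\F(P)$), I would exhibit a finite group realizing it — $\PGL_2(q)$, $\PSL_2(q)$ with the congruence conditions governing whether one or both ``ends'' of the dihedral group are fused, $\SL_2(q)$ and its $C_2$-extension, $\PSL_3(q)$ and $\GL_2(q)$ for the semidihedral and wreathed cases, and $(C_{2^n}\times C_{2^n})\rtimes S_3$ for wreathed essential rank $1$ — citing the classical fusion computations (these are essentially the Gorenstein--Walter and Alperin--Brauer--Gorenstein analyses) and checking orders of $\Aut_\F(P)$, $\Aut_\F(U)$ match. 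I expect the main obstacle to be the bookkeeping in the wreathed case: unlike maximal class, $C_{2^n}\wr C_2$ for $n\geq 2$ has a homocyclic maximal subgroup that is itself abelian of rank $2$, so one must carefully separate the possibility that this maximal subgroup is essential (giving the $(C_{2^n}\times C_{2^n})\rtimes S_3$ and $\PSL_3(q)$ branches via Lemma \ref{l:homocyclicwreathed}) from genuine $V_4$ essentials, and verify that essential rank really is at most $2$ and that the congruence $q\equiv 1\bmod 4$ is exactly what distinguishes the wreathed from the semidihedral $\PSL_3(q)$ fusion — tracking these congruences and the resulting outer automorphism actions is where the argument is least mechanical.
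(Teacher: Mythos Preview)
Your broad strategy matches the paper's: use Alperin's fusion theorem together with the fact that $\Aut(P)$ is a $2$-group (for each of these $P$) to reduce everything to identifying the possible essential subgroups and their $\F$-automorphism groups, then match each configuration with a known finite group. Your treatment of the maximal-class groups and of the resistant cases is essentially the paper's argument, modulo one slip: $Q_{2^n}$ for $n\geq 4$ has \emph{two} $P$-conjugacy classes of $Q_8$-subgroups, not one --- these two classes are what give $\rk_e(\F)\leq 2$ in that case, exactly paralleling the two $V_4$-classes in the dihedral quotient $P/\Z(P)$.

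The wreathed case, however, contains a genuine gap. You open by asserting that the only candidates for essential subgroups in \emph{all four} families are copies of $V_4$ or $Q_8$, citing Proposition \ref{p:metacyclic}; but $C_{2^n}\wr C_2$ is not metacyclic, so that proposition does not apply here. In fact, for $n\geq 2$ the centre $\Z(P)$ is cyclic of order $2^n\geq 4$, so no $V_4$ in $P$ can be $\F$-centric (a centric abelian subgroup must contain $\Z(P)$), and there are no ``genuine $V_4$ essentials'' to separate from anything. The two essential candidates in the wreathed group are (i) the homocyclic base $Q=C_{2^n}\times C_{2^n}$, which you did spot, and (ii) a subgroup $R$ of order $2^{n+2}$ which is the central product $\Z(P)\ast Q_8$. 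The paper identifies $R$ by passing to the dihedral quotient $P/\Z(P)$ and lifting the two $V_4$-classes; one preimage has nonisomorphic maximal subgroups (so $\Aut$ is a $2$-group), and the other is $R$. It is the pair $(Q,R)$, not any $V_4$, that gives the four fusion systems on the wreathed group, and in particular $R$ being essential is what produces the $\GL_2(q)$ and $\PSL_3(q)$ rows. Your outline as written would not find $R$ and would leave the wreathed column unexplained.
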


\begin{proof} The proof of this theorem is broken up into three parts: the dihedral, semidihedral and quaternion groups, wreathed groups, and the homocylic abelian and Suzuki groups.

\subsection*{Dihedral, Semidihedral and Quaternion $2$-groups}

The saturated fusion systems on $D_{2^n}$, $SD_{2^n}$ and $Q_{2^n}$ are quite
similar. We will try, as much as possible, to proceed in parallel for all three classes of groups. 

By Lemma \ref{l:homocyclicwreathed} and Proposition \ref{p:metacyclic}.\ref{pp:nabel-meta}, any $\F$-essential subgroup of $P$ is isomorphic to $V_4$ or $Q_8$ and has $\F$-outer automorphism group isomorphic to $S_3$. In any of the cases, $|\Z(P)|=2$ and $\Z(P)$ is contained in every $\F$-essential subgroup of $P$.  By Alperin's fusion theorem and the fact that the automorphism groups of dihedral, semidihedral and quaternion $2$-groups are again $2$-groups, $\F$ is completely determined by the automorphism groups of the $\F$-essential subgroups. If $\rk_e(\F)=0$, then $\F=\F_P(P)$ for any of our potential $P$, so we may assume that there is at least one $\F$-essential subgroup.

If $P$ is dihedral, then the $\F$-essential subgroups must all be isomorphic to $V_4$. The dihedral groups have two conjugacy classes of $V_4$-subgroups (that are $\Out(P)$-conjugate) and three conjugacy classes of involutions. An automorphism of order $3$ of $V_4$ permutes its three involutions, and as one of these involutions is central in $P$, two of the conjugacy classes of involutions in $P$ will fuse in $\F$. If one or both classes of $V_4$ subgroups are essential then there are two or a single $\F$-conjugacy class of involutions respectively. In the former case this is the fusion system of $\PGL_2(q)$ (for the appropriate odd $q$) and in the latter case this is the fusion system of $\PSL_2(q)$ (again, for the appropriate odd $q$).


If $P$ is generalized quaternion, then every $\F$-essential subgroup is isomorphic to $Q_8$. As $P$ has a unique involution,  $\Z(\F)=\Z(P)\neq 1$. Thus, $\F/\Z(\F)$ is a fusion system on a dihedral group and so by \cite[Corollary 6.14]{bcglo2007}, $\F$ is the fusion system of a finite group. In fact, it is a central extension of a fusion system of a dihedral group.

Finally, if $P$ is semidihedral, then there are three maximal subgroups: a dihedral subgroup, $D$, a cyclic subgroup, $C$, and a generalized quaternion subgroup, $Q$. The two $D$-conjugacy classes of subgroups isomorphic with $V_4$ are fused in $P$, as are the two $Q$-conjugacy classes of subgroups isomorphic to $Q_8$. (If $|P|=16$, there is a unique subgroup of $P$ isomorphic to $Q_8$.) Therefore there is a single $P$-conjugacy class of subgroups isomorphic with $V_4$, and similarly for subgroups isomorphic with $Q_8$.
\[
\xymatrix@! @=1pc{
&& & & P \ar@{-}[dll] \ar@{-}[drr] \ar@{-}[d] & & & & \\
&& D \ar@{-}[dl] \ar@{-}[dr] & &  C & & Q \ar@{-}[dl] \ar@{-}[dr] & & \\
\darud{rrrr}&\circled{V_1} & & \circled{V_2} & \darud{rrrr} & \circled{Q_1} & & \circled{Q_2} &
} 
\]
We conclude that there are at most two $\F$-conjugacy classes of $\F$-essential subgroups of $P$, one isomorphic to $V_4$ and one isomorphic to $Q_8$. As there are two conjugacy classes of involutions in a semidihedral group, there is an $\F$-essential isomorphic to $V_4$ if and only if the involutions form a single $\F$-conjugacy class. Semidihedral groups also have precisely two conjugacy classes of elements of order $4$ (and these are permuted by an automorphism of order $3$ of $Q_8$). Thus, there is an $\F$-essential subgroup isomorphic to $Q_8$ if and only if the elements of order $4$ form a single $\F$-conjugacy class. If both of these situations occur, our fusion system comes from $\PSL_3(q)$. If only the latter situation occurs, we get $\GL_2(q)$. Finally, if only the former situation occurs, then we get the fusion system coming from a nonsplit extension of $\PSL_2(q^2)$ by the field automorphism. (This group has a unique conjugacy class of involutions but must have more than one class of elements of order $4$ since some lie outside of $\PSL_2(q^2)$.)

\subsection*{Wreathed $2$-groups}

Let $P$ be the wreathed group $C_{2^n}\wr C_2$ for some $n\geq 2$ (as $n=1$ gives $P\cong D_8$), and let $Q$ denote the base group $C_{2^n}\times C_{2^n}$ generated by $a$ and $b$, a characteristic subgroup of $P$ with $|P:Q|=2$. Since $P$ is $2$-generator, and one, but not all maximal subgroups of $P$ are abelian (e.g., $C_{2^{n-1}}\wr C_2$ is a subgroup of $P$), $\Aut(P)$ is a $2$-group by Corollary \ref{c:maximal}. Hence it suffices, as in the previous case, to determine all possible essential subgroups of $P$. Note that $P$ has a cyclic center of order $2^n$, generated by $ab$ and lying inside $Q$. 

Fix an element $t$ in $P\setminus Q$ with $a^t=b$ and let $S$ be a subgroup of $P$ with $\C_P(S)\leq S$ such that $\Aut(S)$ not a $2$-group. In \cite[Lemma 1.3]{alperinbrauergorenstein1970}, it is stated that $S$ is either $Q$ or $P$-conjugate to the central product $R$ of $\Z(P)$ and $\gen{(ab^{-1})^{2^{n-2}},t}\cong Q_8$. Since no proof is offered there we sketch a proof here. If $S$ is abelian then $S$  is homocyclic abelian and contains $\Z(P)$, so that $S=Q$. Hence, $S$ is nonabelian, and contains $\Z(P)$. The quotient $P/\Z(P)$ is dihedral of order $2^{n+1}$, and since any automorphism of odd order acts trivially on $\Z(P)$, $\Aut(S/\Z(P))$ cannot be a $2$-group. Hence $S/\Z(P)\cong V_4$, and there are two conjugacy classes of such subgroups; write $R$ and $T$ for preimages in $P$ of representatives of these subgroups. They are generated by the central element of $P/\Z(P)$, namely $\Z(P)a^{2^{n-1}}$, and either $\Z(P)t$ or $\Z(P)at$, and so the two subgroups of interest are
\[ R=\gen{ab,a^{2^{n-1}},t}\text{ and }T=\gen{ab,a^{2^{n-1}},at}=\gen{a^{2^{n-1}},at}.\]
As the cyclic subgroup $\gen{at}$ and the noncyclic subgroup $\gen{ab,a^{2^{n-1}}}$ are both maximal in $T$, there is no automorphism of $T$ of order $3$. This leaves $R$, which is a central product of $\Z(P)$ and a $Q_8$ subgroup, as claimed.

There are therefore four saturated fusion systems on $P$, depending on whether one of the subgroups $Q$, $R$, both $Q$ and $R$, or neither $Q$ nor $R$, are essential subgroups. These are the fusion systems of (respectively): $(C_{2^n}\times C_{2^n})\rtimes S_3$ (with $S_3$ acting by permutations on $a$, $b$ and $a^{-1}b^{-1}$ where $a$ and $b$ are as above); the centralizer of an involution in $\PSL_3(q)$ for $q\equiv 1\bmod 4$, i.e., $\GL_2(q)$;  $\PSL_3(q)$ for $q\equiv 1\bmod 4$ (and $\PSU_3(q)$ for $q\equiv 3\bmod 4$); and $P$ itself.

\subsection*{Resistant Cases}

The other groups mentioned above are the homocyclic abelian groups $P_n=C_{2^n}\times C_{2^n}$, and the Sylow $2$-subgroup of $\PSU_3(4)$, which we will denote by $\Suz$. These $2$-groups are resistant (in the first case since all abelian $p$-groups are resistant and in the second case by Theorem \ref{p:suzukiresistant}), and so to determine all saturated fusion systems on these groups it suffices to determine $\Out(P_n)$ and $\Out(\Suz)$.

In the first case, $\Out(P_n)$ has order $2^\alpha\cdot 3$ for some $\alpha$ by Proposition \ref{p:maxsubs}, and so there are two saturated fusion systems on $P_n$, namely those of $P_n$ and $P_n\rtimes C_3$.

In the second case, it is easy to see (see the following section) that $\Out(\Suz)$ has automorphisms of order $3$ and $5$, and (up to raising by powers) there are unique such automorphisms. Hence there are four saturated fusion systems on $\Suz$, namely those of the groups $\Suz$, $\Suz\rtimes C_3$, $\Suz\rtimes C_5$ and $\Suz\rtimes C_{15}$.
\end{proof}

We finish the section by finding all simple fusion systems on $2$-groups of $2$-rank $2$. Recall that if $\F$ is a saturated fusion system on a finite $p$-group $P$ and if $\E$ is a saturated subsystem of $\F$ on a strongly $\F$-closed subgroup $Q$ of $P$, then $\E$ is \textit{weakly normal} in $\F$ if for all subgroups $S \leq T \leq Q$ and for all $\phi \in \Hom_\F(T,Q)$, conjugation by $\phi$ induces a bijection $\Hom_\E(S,T) \to \Hom_\E(S\phi, T\phi)$. A saturated fusion system is \textit{simple} if its only weakly normal subsystems are the fusion system of the trivial group and itself. Note that while this definition of simplicity differs, \textit{a priori}, from that given by Aschbacher in \cite{aschbacher2008}, recent work of the first author \cite{craven2010un2} shows that these definitions are, in fact, equivalent. 

\begin{thm}\label{t:simplefusion}
Let $\F$ be a saturated fusion system on a finite $2$-group $P$ with $2$-rank $2$. If $\F$ is simple, then $P$ is dihedral, semidihedral or wreathed, and $\F$ is either the fusion system of $\PSL_2(q)$ for $q \equiv \pm 1 \bmod 8$ or $\PSL_3(q)$ for $q$ odd.
\end{thm}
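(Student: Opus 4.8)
The plan is to combine the center-free classification of Theorem~\ref{t:centerfreeclass} with the explicit enumeration of fusion systems in Theorem~\ref{t:tamefusion}, ruling out the surplus cases by exhibiting proper nontrivial weakly normal subsystems. First I would reduce to the center-free situation: if $\F$ is simple then $\Z(\F)=1$ (a nontrivial central subgroup gives a proper weakly normal subsystem, or more directly, $\C_\F(\Z(\F))=\F$ forces $\Z(\F)$ trivial for a simple system since otherwise $\F$ is not even indecomposable in the required sense), so by Theorem~\ref{t:centerfreeclass} the group $P$ is dihedral, semidihedral, wreathed, homocyclic abelian, or the Sylow $2$-subgroup of $\PSU_3(4)$. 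The homocyclic abelian case is immediately excluded: by Theorem~\ref{t:tamefusion} the only nontrivial fusion system there is that of $(C_{2^n}\times C_{2^n})\rtimes C_3$, which has the abelian normal subsystem $\F_P(P)$ and so is not simple; likewise $\F_P(P)$ itself is never simple (for $P\neq 1$). The $\Suz$ case is excluded the same way: every fusion system on $\Suz$ in Theorem~\ref{t:tamefusion} is $\F_P(\Suz\rtimes A)$ for $A\leq C_{15}$, and since $\Suz=\Omega_1(\Suz)=\Z(\Suz)$ is strongly closed (being elementary abelian and containing all involutions, as in the proof of Theorem~\ref{p:suzukiresistant}), the subsystem on it is proper, nontrivial and weakly normal.

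Next I would work through the remaining groups $D_{2^n}$, $SD_{2^n}$, $C_{2^n}\wr C_2$ case by case using the table in Theorem~\ref{t:tamefusion}. For each, the fusion systems with $\rk_e(\F)\leq 1$ must be discarded: when $\rk_e(\F)=0$ we have $\F=\F_P(P)$, not simple; when $\rk_e(\F)=1$ there is a single $\F$-class of essential subgroups, and the group in the table has a normal subgroup of index $2$ (e.g.\ $\PSL_2(q)\normal\PGL_2(q)$, $\SL_2(q)\normal\SL_2(q).C_2$, $\PSL_3(q)\normal\GL_2(q)$, etc.) whose fusion system is a proper nontrivial weakly normal subsystem of $\F$; I would verify directly that the strongly $\F$-closed subgroup in question is a proper subgroup of $P$ (index $2$) and carries a saturated subsystem satisfying the weak normality condition. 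This leaves exactly the $\rk_e(\F)=2$ entries: $\PSL_2(q)$ with $q\equiv\pm1\bmod 8$ on a dihedral group, $\PSL_3(q)$ with $q\equiv 3\bmod 4$ on a semidihedral group, $\SL_2(q)$ with $q$ odd on a generalized quaternion group, and $\PSL_3(q)$ with $q\equiv 1\bmod 4$ on a wreathed group. The generalized quaternion case drops out because $P$ has a unique involution, hence $\Z(\F)=\Z(P)\neq 1$, contradicting $\Z(\F)=1$; so the surviving groups are dihedral, semidihedral and wreathed.

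Finally I would argue that these surviving systems are genuinely simple, i.e.\ have no proper nontrivial weakly normal subsystem. Here I would invoke the classification of weakly normal (equivalently, by \cite{craven2010un2}, normal in Aschbacher's sense) subsystems: a weakly normal subsystem $\E$ lives on a strongly $\F$-closed subgroup $Q$, so I first determine the strongly $\F$-closed subgroups of $\F$ in each case. For the $\rk_e(\F)=2$ systems one checks that the only strongly $\F$-closed subgroups are $1$ and $P$: any proper nontrivial strongly closed $Q$ would have to contain $\Z(P)$ and be normalized by the essential automorphisms of both classes of essential subgroups, and the $S_3$-actions on the two essential subgroups together force $Q=P$; then a weakly normal subsystem on $P$ of $p$-power index or of index prime to $p$ is controlled by $\Out_\F(P)$ and the essential data, and simplicity of $\PSL_2(q)$, $\PSL_3(q)$ as abstract groups (together with the fact that these fusion systems realize them) rules out proper subsystems. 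The main obstacle will be this last step — proving that no proper nontrivial weakly normal subsystem exists — since it requires either a careful hands-on analysis of strongly closed subgroups and the $O^p$/$O^{p'}$-subsystem machinery, or an appeal to the known structure of weakly normal subsystems; I expect to lean on the fact that for these small groups the $\F$-essential subgroups are all $\F$-conjugate into at most two classes with $\Out_\F\cong S_3$, so that removing any essential automorphism data collapses $\rk_e$ and hence (by the earlier cases) produces $\F_P(P)$, while removing none leaves $\F$, forcing simplicity.
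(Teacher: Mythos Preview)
Your overall strategy---reduce to $\Z(\F)=1$, invoke Theorem~\ref{t:centerfreeclass}, then eliminate the non-simple entries of the table in Theorem~\ref{t:tamefusion}---is correct and matches the paper's route. But the paper reaches the conclusion much more cheaply, and your proposal contains both unnecessary work and a couple of slips.

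First, the paper does not argue case by case through the $\rk_e(\F)\le 1$ entries. Instead, having shown via Theorem~\ref{t:tamefusion} that every saturated fusion system on these $2$-groups is $\F_P(G)$ for an explicit finite group $G$, it simply invokes the general fact that a simple fusion system realized by a finite group is realized by a simple group. One glance at the table then leaves only $\PSL_2(q)$ and $\PSL_3(q)$. Your hands-on elimination works in principle, but your list of index-$2$ normal subgroups is garbled (``$\PSL_3(q)\normal\GL_2(q)$'' is not what you mean; for $\GL_2(q)$ with $q\equiv 1\bmod 4$ the subgroup $\SL_2(q)$ does not have index~$2$ at the level of Sylow $2$-subgroups, though the subgroup of matrices with square determinant does), so this would need tidying.

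Second, your final paragraph---verifying that the surviving systems are genuinely simple---is not required. The theorem is one-directional: it asserts only that a simple $\F$ must appear on the list, not that every system on the list is simple. The paper's proof accordingly stops once the list is obtained.

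Finally, two small errors: you write ``$\Suz=\Omega_1(\Suz)=\Z(\Suz)$'', but $\Suz$ is nonabelian of order $2^6$ while $\Omega_1(\Suz)=\Z(\Suz)$ has order $4$; the paper instead eliminates $\Suz$ (and the homocyclic case) by first quoting that simplicity forces $\O_2(\F)=1$, which immediately rules out any resistant $P$. This also gives $\Z(\F)=1$ in one stroke, streamlining your opening reduction.
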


\begin{proof}
By \cite[Proposition 6.2]{stancu2006}, $\O_p(\F) = 1$. Therefore, $\Z(\F) = 1$ and we may apply Theorem \ref{t:centerfreeclass}, implying that $P$ is dihedral, semidihedral, wreathed, homocyclic abelian or the Sylow $2$-subgroup of $\PSU_3(4)$. However, $P$ is not homocyclic abelian since the only simple fusion system on an abelian $p$-group is $\F_{C_p}(C_p)$ and $P$ is not the Sylow $2$-subgroup of $\PSU_3(4)$ since in this case, by Theorem \ref{p:suzukiresistant}, $P$ is normal in $\F$. The result now follows from Theorem \ref{t:tamefusion} and the fact that a simple fusion system coming from a finite group must come from a simple group. 
\end{proof}

\section{Odd-order automorphisms of $2$-groups}
\label{sec:oddorderautos}

In this section we will determine all $2$-groups of $2$-rank $2$ possessing a nontrivial odd-order automorphism, the first step in classifying all saturated fusion systems on $2$-groups of $2$-rank $2$, which would complete the project in \cite{drv2007}.

In order to state the main theorem of this section, we introduce the following notation for several classes of $2$-groups. Let $Q_{n,m}=Q_{2^n}\times Q_{2^m}$, $QC_{n,m}=Q_{2^n}\times C_{2^m}$ and $QD_{n,m} = Q_{2^n} \times D_{2^n}$. The notation $QC_{n,m}^*$ and $QD_{n,m}^*$ are defined similarly but as central products instead of direct products. The first two of these series have exactly three involutions and, if $n =3$, each of these possesses an automorphism of order $3$ acting in the obvious way on the $Q_8$-factor and trivially on the other factor; the group $Q_{3,3}$ possesses another automorphism of order $3$ acting nontrivially on both factors. We also write $C_{n,m}=C_{2^n}\times C_{2^m}$. The Sylow $2$-subgroup of $\PSU_3(4)$ will again make an appearance as the only Suzuki $2$-group with three involutions, and here we will again refer to it as $\Suz$.

There are two more infinite series of groups to define; both are (nonsplit) extensions of $Q_8\times C_{2^n}$ by $C_2$. Let $Q_8$ be generated by $a$ and $b$, and let the cyclic factor be generated by $c$. The groups $\GA_n$ and $\GB_n$ (of order $2^n$) have a normal subgroup $QC_{3,n-4}$ generated by $a$, $b$ and $c$, with $d$ outside having order $4$ and commuting with $a$ and $b$, and moving $c$. In $\GB_n$, the element $d$ inverts $c$, and in $\GA_n$ it sends $c$ to $c^{-1}a^2$. In $\GA_n$, $d^2$ is the element $z$ of order $2$ in $\gen c$, and in $\GB_n$ we have $d^2=a^2z$.

More formally, we have
\begin{align*} \GA_n=&\langle\; a,b,c,d\mid a^4=c^{2^{n-4}}=1,\;b^2=a^2,\;a^{-1}ba=b^{-1},
\\ &d^2=c^{2^{n-5}},\;d^{-1}cd=c^{-1}a^2,\;[a,c]=[b,c]=[a,d]=[b,d]=1\;\rangle,\text{ and}
\\ \GB_n=&\langle\;a,b,c,d\mid a^4=c^{2^{n-4}}=1,\;b^2=a^2,\;a^{-1}ba=b^{-1},
\\ &d^2=a^2c^{2^{n-5}},\;d^{-1}cd=c^{-1},\;[a,c]=[b,c]=[a,d]=[b,d]=1\;\rangle.\end{align*}
Notice that $\GA_6=\GB_6$. The automorphism of order $3$ on these two groups acts in the obvious way on $\gen{a,b} \cong Q_8$ and trivially on $c$ and $d$.

\begin{thm}\label{t:oddorderauts} Let $P$ be a finite $2$-group. If $P$ has exactly three involutions, then $P$ possesses a nontrivial odd-order automorphism $\phi$ if and only if $P$ is one of the following:
\begin{enumerate}
\item $QC_{3,n}$ for some $n\geq 1$ with $o(\phi)=3$;
\item $Q_{3,n}$ for some $n\geq 3$ with $o(\phi)=3$ (there are two different automorphisms if $n=3$);
\item $C_{n,n}$ for some $n\geq 1$ with $o(\phi)=3$;
\item $\GA_n$ for some $n\geq 7$ with $o(\phi)=3$;
\item $\GB_n$ for some $n\geq 6$ with $o(\phi)=3$;
\item the Suzuki $2$-group $\Suz$ with $o(\phi)=3$ or $o(\phi)=5$.
\end{enumerate}
\end{thm}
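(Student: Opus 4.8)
The plan is to fix a $2$-group $P$ with exactly three involutions and a nontrivial odd-order automorphism $\phi$, and work outward from the subgroup $T=\Omega_1(P)$, which by hypothesis is a copy of $V_4$ on which $\phi$ must act nontrivially (a $p'$-automorphism cannot centralise all involutions — this is the same point used via \cite[Theorem 5.2.4]{gorenstein} in the proof of Proposition~\ref{p:metacyclic}). So $\phi$ induces an order-$3$ element on $T$, and we may reduce to the case $o(\phi)=3$ for the bulk of the argument, picking up $o(\phi)=5$ only in the Suzuki case. The first real case split is on $\C_P(T)$. If $T$ is self-centralising then by Corollary~\ref{c:omnibus}.\ref{cp:selfcentv4} $P$ is dihedral or semidihedral; but dihedral and semidihedral groups have respectively three and two classes of involutions and an odd-order automorphism would have to fuse involutions, forcing $|P|$ small, and one checks directly that $D_8$ is the only survivor — but $D_8$ has $2$-rank $2$ and its automorphism group is a $2$-group, so in fact this branch is empty. (If the paper means $2$-rank $2$ throughout, then $P$ is $2$-generated with $\Aut(P)$ not a $2$-group, and Corollary~\ref{c:omnibus}.\ref{cp:maxclass} kills maximal class outright except $Q_8$, which has a unique involution.) Hence we may assume $\C_P(T)>T$.

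Next I would analyse $C=\C_P(T)$. It is a characteristic subgroup (being the centraliser of the characteristic subgroup $\Omega_1(P)$), so $\phi$ acts on it, and $C$ has at most three involutions, all lying in $T\le Z(C)$. An abelian group with exactly three involutions and a fixed-point-poor automorphism of order $3$ permuting them cyclically is, by the structure theory of finite abelian groups together with Corollary~\ref{c:omnibus}.\ref{cp:2genabelodd}–\ref{cp:2genabeleven}, forced to be $C_{2^n}\times C_{2^n}$; that yields outcome (3) when $C=P$. If $C$ is nonabelian then, since its involutions are central, $C$ has cyclic-or-$V_4$ socle arguments available: I would show $Z(C)$ is homocyclic and $C/Z$ small, and that $C$ must contain a $Q_8$ permuted cyclically by $\phi$, so $C$ is one of $Q_8\ast C_{2^n}$, $Q_8\times C_{2^n}$, $Q_8\ast D_{2^n}$, $Q_8\times Q_{2^n}$, or $Q_8\wr C_2$, or the Suzuki group — essentially reconstructing the list of \cite{glsvol6}-type $2$-groups with a central $V_4$ socle and an order-$3$ automorphism. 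The key structural input is that $\gen{\text{the }\phi\text{-orbit}}$ on $T$ together with a preimage-completion gives a $Q_8$, and that this $Q_8$ is $\phi$-invariant and "almost" a direct factor.

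The remaining task is the extension step: passing from $C=\C_P(T)$ to $P$ when $C<P$. Here $|P:C|=2$, and a generator $d$ of $P/C$ acts on $C$ commuting (after adjusting by $\Inn$) with the action of $\phi$, so $d$ normalises the $\phi$-invariant $Q_8$ and acts on it as an element of $\Aut(Q_8)$ commuting with an order-$3$ automorphism, i.e.\ trivially (inner). Thus $d$ centralises that $Q_8$ modulo $Z$, and all the freedom is in how $d$ acts on the homocyclic complement $\gen c$: the possibilities $d^{-1}cd=c^{-1}$ versus $d^{-1}cd=c^{-1}a^2$, combined with the two choices of $d^2\in\gen{a^2,c}$, are exactly what produce $\GA_n$ and $\GB_n$ (and, in the degenerate small cases, $Q_8\times Q_{2^n}$ etc.), giving outcomes (1),(2),(4),(5). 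I would organise this as: list the $\phi$-equivariant conjugacy classes of actions of $C_2$ on $C$, then for each surviving action enumerate the extensions and read off the presentation, checking in each case that the three-involution and odd-automorphism hypotheses are met (this is where $n\ge 7$ for $\GA_n$ and $n\ge 6$ for $\GB_n$, and the $n=3$ exceptional second automorphism of $Q_{3,3}$, get pinned down). Finally the Suzuki case: if $P$ itself is a Suzuki $2$-group with three involutions, Higman's Theorem~\ref{t:Higman} forces $\Omega_1(P)=Z(P)=\Phi(P)=P'$ of order $4$, so $|P|=2^{\dim}$ is tightly constrained and \cite[Lemma 2.2.6]{glsvol6} identifies it as $\Suz$, the Sylow $2$-subgroup of $\PSU_3(4)$, with $\Out$ containing elements of order $3$ and $5$.

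\textbf{The main obstacle} I expect is the nonabelian-$C$ classification together with the extension enumeration: ruling out spurious actions of $d$ and verifying that the resulting groups genuinely have exactly three involutions (not more) requires careful, if routine, computation with the presentations — in particular distinguishing when $d$ or $a^id$ or $c^jd$ squares to an involution outside $T$, which would violate the three-involution hypothesis, versus when it does not. Keeping the $\phi$-equivariance constraint live throughout the extension analysis (so that we only count $P$'s that actually admit the odd automorphism, rather than all extensions of $C$ by $C_2$) is the bookkeeping that makes the list come out finite and exactly as stated.
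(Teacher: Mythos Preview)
Your opening move---that $\phi$ must act nontrivially on $T=\Omega_1(P)$---is false, and the whole strategy is built on it. The citation of \cite[Theorem 5.2.4]{gorenstein} is the giveaway: that result says a $p'$-automorphism of an \emph{abelian} $p$-group cannot centralise $\Omega_1$, and it simply does not extend to nonabelian $P$. In fact for most of the groups on the target list the odd automorphism \emph{does} centralise all three involutions. Take $QC_{3,n}=Q_8\times C_{2^n}$: the order-$3$ automorphism cycles $a\mapsto b\mapsto ab$ on the $Q_8$ factor and fixes $c$, so it fixes $a^2$, fixes the involution $z\in\gen c$, and hence fixes $a^2z$. The same happens for $Q_{3,n}$, $\GA_n$ and $\GB_n$, where $\phi$ acts only on the $Q_8$ piece and trivially on the rest. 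Only $C_{n,n}$ and $\Suz$ have $\phi$ moving the involutions. So the dichotomy ``$\phi$ permutes the three involutions cyclically, hence $o(\phi)=3$ and we analyse $\C_P(T)$'' never gets off the ground: in the generic case $\C_P(T)=P$ and you have learned nothing.

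The paper's route is quite different and does not pivot on $\Omega_1(P)$ at all. It first disposes of small orders by computer (Proposition~\ref{p:smallorder}), then splits on whether $\phi$ stabilises a maximal subgroup. If not, $\phi$ acts fixed-point freely on $P/\Phi(P)$ and Lemma~\ref{l:fpfaction} (Mazurov) together with Higman's theorem forces $P$ to be small, landing in $C_{n,n}$, $Q_{3,3}$ or $\Suz$. If $\phi$ does stabilise a maximal subgroup $Q$, one reduces to $o(\phi)=3$ with $P$ three- or four-generator, proves $[P,\phi]=[Q,\phi]\cong Q_8$ and $P=[P,\phi]\,\C_P([P,\phi])$ (Lemma~\ref{l:RisQ8}), and then classifies $P$ as a central product of $Q_8$ with $\C_P(Q_8)$---this is where $QC_{3,n}$, $Q_{3,n}$, $\GA_n$, $\GB_n$ emerge. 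The organising invariant is the number of generators of $P$ and the existence of $\phi$-invariant maximal subgroups, not the action on involutions; your extension step (the analysis of how an outside element $d$ acts on the cyclic complement) does resemble the paper's Proposition~\ref{p:4genhas3gen}, but you would need a completely different way to reach that point.
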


\begin{thm}\label{t:oddorderauts2} Let $P$ be a finite $2$-group of $2$-rank $2$. If $P$ has more than three involutions, then $P$ possesses an odd-order automorphism if and only if $P$ is one of the following:
\begin{enumerate}
\item $Q_8\ast C_{2^n}$ for some $n\geq 2$ with $o(\phi)=3$;
\item $Q_8\ast D_{2^n}$ for some $n\geq 3$ with $o(\phi)=3$, or $n=3$ and $o(\phi)=5$;
\item $Q_8\wr C_2$ with $o(\phi)=3$.
\end{enumerate}
\end{thm}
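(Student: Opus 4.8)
The plan separates the two implications; the reverse one is a direct construction and the forward one is where the work lies.

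For the reverse implication one exhibits an automorphism in each case. The automorphism $\alpha$ of $Q_8$ of order $3$ that cyclically permutes $\gen i,\gen j,\gen k$ fixes $\Z(Q_8)$, hence descends to a non-trivial automorphism of $Q_8\ast C_{2^n}$ and of $Q_8\ast D_{2^n}$ (acting trivially on the second factor), and extends to $Q_8\wr C_2$ when applied to both base factors simultaneously, since it commutes with the wreathing involution. When $n=3$, $Q_8\ast D_8$ is the extraspecial group $2^{1+4}_-$, whose outer automorphism group is $\mathrm{O}_4^-(2)\cong S_5$, so in that case there is also an automorphism of order $5$.

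For the forward implication, let $\phi$ be a non-trivial odd-order automorphism of $P$; passing to a power we may assume $o(\phi)=p$ with $p$ an odd prime. The decisive reduction, and the point at which this differs from Theorem~\ref{t:oddorderauts}, is that $\Omega_1(\Z(P))\cong C_2$: were it to contain a Klein four group $V$, then $\gen{V,x}$ would be elementary abelian for every involution $x$ of $P$, so the $2$-rank hypothesis would force $x\in V$ and $P$ would have only three involutions. Write $\gen z=\Omega_1(\Z(P))$; then $\phi$ fixes $z$, and $\phi$ centralizes $\Z(P)$ since that group is cyclic and the automorphism group of a cyclic $2$-group is a $2$-group. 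Moreover $P$ is not of maximal class --- otherwise $P\cong Q_8$ by Corollary~\ref{c:omnibus}.\ref{cp:maxclass}, contradicting that $P$ has more than one involution --- so in particular $P$ is neither dihedral nor semidihedral, and by Corollary~\ref{c:omnibus}.\ref{cp:selfcentv4} no Klein four subgroup of $P$ is self-centralizing. Finally $P$ is non-abelian (an abelian $2$-group of $2$-rank $2$ has exactly three involutions) and non-metacyclic (by Proposition~\ref{p:metacyclic}, since the metacyclic $2$-groups carrying odd automorphisms are $C_{2^n}\times C_{2^n}$ and $Q_8$).

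The core of the proof is then to recover the global structure of $P$ from this local data and the action of $\phi$. One route is to sift the known structure of $2$-groups of $2$-rank $2$ (cf.\ the references in the introduction) against the constraints above; a more self-contained route distinguishes cases according to whether $\phi$ normalizes a quaternion subgroup $Q\cong Q_8$. In the normalizing case $\phi$ acts on $Q$, necessarily as an automorphism of order $3$ since $|\Aut(Q_8)|=24$; one analyses the $\phi$-invariant centralizer $C=\C_P(Q)$, which has $2$-rank $\le 2$ and contains no quaternion group of order $8$ with centre $\gen z$ (that would give a copy of $Q_8\ast Q_8\cong 2^{1+4}_+$, of $2$-rank $3$), forcing $C$ to be cyclic or dihedral --- except in the branch where $C$ itself contains a second quaternion factor, where $P$ turns out to be $Q_8\wr C_2$ --- and checking $P=QC$ then identifies $P$ as $Q_8\ast C_{2^n}$ or $Q_8\ast D_{2^n}$. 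In the remaining case $P$ must be extraspecial of order $2^5$, hence $2^{1+4}_\pm$, and then $P=2^{1+4}_-=Q_8\ast D_8$ because $\mathrm{O}_4^+(2)\cong S_3\wr C_2$ has no element of order $5$. The order of $\phi$ is read off last, family by family: $\phi$ acts on $V=P/\Phi(P)$, where $\dim_{\mathbb{F}_2}V\in\{3,4\}$, so $\mathrm{ord}_p(2)\le\dim V$ gives $p=3$ except in the extraspecial case where $p=5$ is also possible, while the presence in each group of a characteristic cyclic or quaternion subgroup on which $\phi$ acts trivially (its automorphism group being a $2$-group, or of order $24$), together with the fact that the relevant stability groups are $2$-groups, rules out every prime exceeding $5$.

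I expect the main obstacle to be exactly this reconstruction step: pinning down $\C_P(Q)$ to the short list of shapes, verifying $P=QC$ (or the wreath extension), and confirming that the resulting group is literally one of $Q_8\ast C_{2^n}$, $Q_8\ast D_{2^n}$, $Q_8\wr C_2$. This is a generator-and-relation analysis in the spirit of the proof of Theorem~\ref{t:oddorderauts}, made more delicate here by the greater number of $2$-groups of $2$-rank $2$ with more than three involutions and by the bookkeeping of which of them actually admit an odd-order automorphism.
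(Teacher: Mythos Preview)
Your outline diverges substantially from the paper's approach, and the forward implication as sketched has a real gap.

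The paper does not treat Theorem~\ref{t:oddorderauts2} separately from Theorem~\ref{t:oddorderauts}; the two are proved simultaneously by a single chain of reductions. The architecture is: (i) verify both theorems by computer for $|P|\le 2^9$ (Proposition~\ref{p:smallorder}); (ii) handle the case where $\phi$ fixes no maximal subgroup of $P$ using Mazurov's lemma (Lemma~\ref{l:fpfaction}) and the Suzuki $2$-group classification (Proposition~\ref{p:nophiinvmaxs}); (iii) otherwise reduce to $o(\phi)=3$ and $P$ being $3$- or $4$-generator (Lemma~\ref{l:no57}); (iv) prove that $R=[P,\phi]\cong Q_8$ and $P=R\,\C_P(R)$ (Lemma~\ref{l:RisQ8}); (v) analyse $\C_P(R)$ generator-by-generator. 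The more-than-three-involutions cases fall out of steps (iv)--(v): in the $3$-generator case one gets $QC_{3,n}^*$, and in the $4$-generator case $QD_{3,n}^*$ or $Q_8\wr C_2$. The small-order computer check is load-bearing, both as the base of several inductions and to dispose of sporadic configurations (e.g.\ the $o(\phi)=5$ case with $QD_{3,3}^*$).

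Your route instead hinges on a dichotomy ``$\phi$ normalizes some $Q_8$'' versus ``it does not'', and in the second branch you assert that $P$ must be extraspecial of order $2^5$. That step is not justified: nothing in your setup forces $P$ to be extraspecial, or even of bounded order, merely from the absence of a $\phi$-invariant $Q_8$. The paper sidesteps this entirely by \emph{producing} the $\phi$-invariant $Q_8$ canonically as $[P,\phi]$ (once $|P|\ge 2^8$ and a $\phi$-invariant maximal subgroup exists), so your dichotomy collapses and the ``non-normalizing'' branch never arises in the large-order regime. Likewise, your claim that $\C_P(Q)$ is cyclic or dihedral is exactly where the paper does detailed work (Proposition~\ref{p:4genhas3gen}, last paragraph), ruling out $C_{2^n}\times C_2$, semidihedral, generalized quaternion, and modular shapes by exhibiting rank-$3$ elementary abelian subgroups in the corresponding central products; this is not a consequence of the $2$-rank hypothesis alone and needs to be argued. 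Finally, your treatment omits the fixed-point-free case (no $\phi$-invariant maximal subgroup), which in the more-than-three-involutions setting is precisely where $QD_{3,3}^*$ with $o(\phi)=5$ appears.
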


The proofs of Theorems \ref{t:oddorderauts} and \ref{t:oddorderauts2} require a few results from the literature on $2$-groups and their automorphisms. We firstly need MacWilliams's four-generator theorem.

\begin{thm}[MacWilliams \cite{macwilliams1970}] Let $P$ be a finite $2$-group, and suppose that $P$ has no normal, elementary abelian subgroup of order $8$. Then $P$ has sectional $2$-rank at most $4$.
\end{thm}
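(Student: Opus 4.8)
The plan is to prove the statement in the equivalent form about the sectional $2$-rank directly. Write $s(P)=\max_{H\le P}d(H)$ for the sectional $2$-rank, where $d(H)=\dim_{\mathbb{F}_2}H/\Phi(H)$; for $2$-groups the maximum over subgroups already captures every section, since $d(H/K)\le d(H)$, so it suffices to bound $d(H)$ for all subgroups $H$. I would argue by induction on $|P|$ that the absence of a normal copy of $C_2^3$ forces $s(P)\le 4$. The first move is to reinterpret the hypothesis: applying $\Omega_1$ to a normal abelian subgroup shows that ``no normal elementary abelian subgroup of order $8$'' is equivalent to the statement that every normal abelian subgroup has rank at most $2$ (write $\mathrm{mn}(P)$ for the largest rank of a normal elementary abelian subgroup, so the hypothesis reads $\mathrm{mn}(P)\le 2$). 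Fixing a maximal normal abelian subgroup $A$, one has the standard fact $\C_P(A)=A$ together with $\rk(A)\le 2$, so $A\cong C_{2^a}\times C_{2^b}$ is $2$-generated and $\bar P=P/A$ embeds as a $2$-group of automorphisms into $\Aut(A)$.

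At this point the naive estimate is insufficient, and seeing \emph{why} is what shapes the rest of the argument. For $H\le P$ one has $d(H)\le d(H\cap A)+d(HA/A)\le 2+d(HA/A)$, but the Sylow $2$-subgroups of $\Aut(C_{2^a}\times C_{2^b})$ contain subgroups requiring four generators; already the first congruence layer $K_1/K_2$ of $\GL_2(\mathbb{Z}/2^a)$, where $K_i$ is the kernel of the reduction $\GL_2(\mathbb{Z}/2^a)\to\GL_2(\mathbb{Z}/2^i)$, is elementary abelian of rank $4$. So subadditivity only yields $s(P)\le 6$. The content of the theorem is precisely that $\mathrm{mn}(P)\le 2$ prevents $\bar P$ from realising such large sections of $\Aut(A)$ without creating a normal $C_2^3$ inside $P$; hence the core of the proof is a classification of which $2$-subgroups $\bar P\le\Aut(A)$ can actually occur.

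I would organise this around the congruence filtration of $\Aut(A)$. In the homocyclic case $A\cong C_{2^n}\times C_{2^n}$, where $\Aut(A)\cong\GL_2(\mathbb{Z}/2^n)$, the layers satisfy $K_i/K_{i+1}\cong\mathbb{F}_2^4$, and in the non-homocyclic case $a>b$ there is an analogous, somewhat smaller, filtration. The mechanism I would exploit is that an element $\bar t\in\bar P$ lying in a deep layer centralises $\Omega_k(A)$ for large $k$, so $[t,A]$ is small and $t$ admits an involutory (or small-order) representative in $P$; moreover the action of $\bar P$ on $\Omega_1(A)\cong C_2^2$ factors through $\GL_2(\mathbb{F}_2)\cong S_3$ whose $2$-part has order $\le 2$, so a subgroup of index $\le 2$ in $\bar P$ centralises $\Omega_1(A)$ pointwise. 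Combining such an involution $t$ with $\Omega_1(A)$ (which is characteristic in $A$, hence normal in $P$) produces an elementary abelian $C_2^3$. The target is then a dichotomy: either $\bar P$ is constrained enough that $d(H)\le 4$ for every $H\le P$ can be read off from $A$ being $2$-generated acted on by a rank-$2$ group, or one of these configurations occurs and, after arranging normality, exhibits the forbidden normal $C_2^3$, making the case vacuous. An induction on $|P|$, passing to suitable characteristic sections such as $\C_P(\Omega_1(A))$ or $P/\Omega_1(\Z(P))$, is used to descend through the layers.

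The hard part will be exactly the construction of \emph{normal} $C_2^3$'s. It is relatively easy to produce an $E_8$; the difficulty is arranging $P$-invariance, which is what the theorem is really about, and this forces one to track both a commutator/cohomology computation (controlling when an automorphism of $A$ of a given congruence depth lifts to a genuine involution of $P$, via $[t,A]$ and $t^2\in\C_A(t)$) and a centralising condition making $\langle\Omega_1(A),t\rangle$ normal, typically requiring $\bar t$ to be central in $\bar P$ or the layer to be chosen so that $\bar P$ stabilises the candidate subgroup. The homocyclic case is the genuinely delicate one, since $\GL_2(\mathbb{Z}/2^n)$ offers the most room for rank-$4$ elementary abelian sections and each configuration not already forcing a normal $C_2^3$ must be ruled in by an explicit rank count; this is precisely the extended case analysis that makes MacWilliams's original argument long, and I would expect to reproduce it essentially in full rather than to find a shortcut.
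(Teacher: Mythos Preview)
The paper does not prove this statement at all: it is quoted verbatim as a result of MacWilliams with a citation to \cite{macwilliams1970}, and is then used only through its corollary that a $2$-group of $2$-rank $2$ is at most $4$-generator. There is therefore no proof in the paper to compare your proposal against.

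As for your outline itself, it is a fair sketch of the shape of MacWilliams's original argument (maximal normal abelian $A$ with $\C_P(A)=A$ and $\rk(A)\le 2$, then analysing $P/A\hookrightarrow\Aut(A)$ through the congruence filtration and showing that a rank-$4$ elementary abelian section of $P/A$ forces a normal $E_8$ in $P$). You are also candid that the substance lies in the case analysis and that you do not expect a shortcut. That is accurate: the difficulty is exactly in manufacturing a \emph{normal} $E_8$ rather than merely an $E_8$, and this requires the detailed tracking of centralisers and commutators that you describe. What you have written is a correct plan, not a proof; since the paper simply imports the theorem, that is all that is called for here.
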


Consequently, if $P$ has $2$-rank $2$ then $P$ is at most $4$-generator. Our next preliminary result is a lemma of Mazurov, although it appears in various guises elsewhere, such as \cite[Theorem 1.3]{hawkes1973}.

\begin{lem}[{{\cite[Lemma 2]{mazurov1969}}}]\label{l:fpfaction} Let $P$ be a finite $2$-group. If $P$ possesses an odd-order automorphism that fixes all involutions of $P$ and acts fixed-point freely on $P/\Phi(P)$, then $P$ is a special $2$-group and $\Omega_1(P)=\Phi(P)$, i.e., $P'=\Z(P)=\Phi(P) = \Omega_1(P)$.
\end{lem}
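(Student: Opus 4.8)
The plan is to use the standard coprime action machinery together with the basic results on $2$-groups admitting fixed-point-free action on the Frattini quotient. Write $\phi$ for the odd-order automorphism, $\bar P = P/\Phi(P)$, and recall that since $\phi$ acts coprimely on $P$ and fixed-point-freely on $\bar P$, the usual lifting argument (for coprime action, if $\phi$ fixes no nontrivial element of $\bar P$ then it fixes no nontrivial element of $P$) would normally force $\phi=1$; the point of the hypothesis is that $\phi$ is only required to fix all \emph{involutions}, so $\Omega_1(P)$ is exactly the obstruction. First I would set $V=\Omega_1(P)$ and observe that $\phi$ acts trivially on $V$ by hypothesis, so $V$ is a $\phi$-invariant subgroup on which $\phi$ is the identity. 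Since $\phi$ acts fixed-point-freely on $\bar P$ but trivially on the image $\bar V$ of $V$ in $\bar P$, we must have $\bar V = 1$, i.e.\ $V\leq \Phi(P)$. This already gives $\Omega_1(P)\leq\Phi(P)$.

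Next I would show $\exp(P)=2$ on the relevant sections, i.e.\ that every element of $P$ squares into $\Phi(P)$ in a controlled way, culminating in $P$ being special. By coprime action, $P = [P,\phi]\C_P(\phi)$, and more usefully the map $x\mapsto x^{-1}x^\phi$ behaves well; combined with $\C_{\bar P}(\phi)=1$ we get $[\bar P,\phi]=\bar P$. The key step is to prove $P' = \Phi(P) = \Omega_1(P) = \Z(P)$. One inclusion chain is easy: $P'\leq\Phi(P)$ always, and $\Phi(P)=P'P^2$. I would argue $P^2\leq\Omega_1(P)$: for $x\in P$, consider the cyclic-by-coprime-action structure — the subgroup $\gen{x}^\phi$ (the $\phi$-closure of $\gen x$) is $\phi$-invariant, and using the hypothesis that $\phi$ fixes all involutions one shows $x^2$ must be an involution or trivial, for otherwise $\phi$ would move some involution in $\gen{x}$ (the unique involution of the cyclic group $\gen x$), a contradiction. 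Hence $x^4=1$ for all $x$, $P^2\leq\Omega_1(P)=V\leq\Phi(P)$, and since $\Phi(P)=P'P^2\leq P'\Omega_1(P)$, while also $\Omega_1(P)\leq\Phi(P)$, I would deduce $\Phi(P)=\Omega_1(P)$ once I know $\Omega_1(P)$ is abelian and elementary; and $\Phi(P)$ elementary abelian with $P$ having no noncentral such section (because $\phi$ acts fixed-point-freely on $P/\Phi(P)$, hence $P/\Phi(P)$ has no $\phi$-trivial quotient, forcing $\Phi(P)$ itself to be central by a three-subgroups/coprime argument) gives $\Phi(P)\leq\Z(P)$, and combined with $P/\Phi(P)$ abelian, $P'\leq\Phi(P)\leq\Z(P)$, so $P$ is nilpotent of class $2$.

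To finish I would pin down the reverse inclusions: $\Z(P)\leq\Phi(P)$ because any central element outside $\Phi(P)$ would survive to a nontrivial $\phi$-fixed element of $\bar P$ (central elements are fixed by $\phi$ up to the coprime argument — more carefully, $\Z(P)$ is $\phi$-invariant and $\phi$ acts coprimely, so $\C_{\Z(P)}(\phi)$ maps onto $\C_{\overline{\Z(P)}}(\phi)$, which must be trivial in $\bar P$, forcing $\overline{\Z(P)}=1$ after also handling the $[\Z(P),\phi]$ part), and $P'\geq$ the relevant commutators shows $\Phi(P)=P'$ using that $P^2\leq P'$ here (each square $x^2$, being an involution fixed by $\phi$ and lying in a special-type section, is a commutator — this is where the commutator structure of class-$2$ groups with the squaring map being linear modulo $P'$ is used). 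Assembling, $P' = \Phi(P) = \Omega_1(P) = \Z(P)$ and $\exp(P)\mid 4$, which is exactly the assertion that $P$ is special.

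\medskip

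\noindent\textbf{Main obstacle.} The delicate point is the interplay between "$\phi$ fixes all involutions" and "$\phi$ is fixed-point-free on $\bar P$": these pull in opposite directions, and the real content is that they can only be reconciled when $P$ is special. Concretely, the hard step is proving $P^2\subseteq\Omega_1(P)$ (equivalently $\exp P\mid 4$) and that $\Phi(P)$ is elementary abelian and central — one cannot simply cite coprime fixed-point-free action (which would give $P=1$); one has to localize the argument to cyclic subgroups and their unique involutions and feed in the hypothesis there. I would expect to need the elementary fact that in a cyclic $2$-group the unique subgroup of order $2$ is characteristic, together with the coprime-action identity $\bar P=[\bar P,\phi]$, and possibly a short induction on $|P|$ passing to $P/\gen z$ for a central involution $z$ (noting $\phi$ descends and still satisfies the hypotheses) to reduce to a minimal configuration; the cited analogue \cite[Theorem 1.3]{hawkes1973} handles essentially this reduction.
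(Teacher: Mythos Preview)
The paper does not prove this lemma; it is quoted from Mazurov \cite[Lemma~2]{mazurov1969} (with Hawkes \cite[Theorem~1.3]{hawkes1973} mentioned as an alternate source), so there is no paper proof to compare against. Your sketch contains correct pieces: in particular $\Omega_1(P)\leq C_P(\phi)\leq\Phi(P)$ is right, and $Z(P)\leq\Phi(P)$ follows once you observe (via \cite[Theorem~5.2.4]{gorenstein}) that $\phi$ centralises the abelian group $Z(P)$ because it centralises $\Omega_1(Z(P))$.

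However, your argument for $\exp(P)\mid 4$ is broken. You write that if $x^2$ were not an involution then ``$\phi$ would move some involution in $\gen{x}$''. But the hypothesis is precisely that $\phi$ \emph{fixes} every involution, including the unique involution $x^{o(x)/2}$ of $\gen{x}$; this holds regardless of whether $o(x)=4$ or $o(x)=2^{10}$, and yields no contradiction. So the crucial inclusion $\Phi(P)\leq\Omega_1(P)$ is simply unproven. Likewise, your justification that $\Phi(P)\leq Z(P)$ via ``a three-subgroups/coprime argument'' is only a gesture: fixed-point-freeness on $P/\Phi(P)$ does not by itself force $\Phi(P)$ to be central, and you have not said which three subgroups you mean.

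Your suggested induction via $P/\gen{z}$ for a central involution $z$ also fails as stated: involutions of $P/\gen{z}$ arise from elements $x\in P$ with $x^2\in\gen{z}$, in particular from elements of order $4$ with $x^2=z$, and there is no reason $\phi$ should fix such $x$. Hence the hypothesis that $\bar\phi$ fixes all involutions of $\bar P$ is not inherited, and the induction does not go through. The actual arguments in Mazurov and Hawkes require genuine work at exactly this point; you should consult those sources rather than improvising the reduction.
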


Note that when we say a group has $n$ involutions, we mean that it has \emph{exactly} $n$ involutions.

\begin{thm}[Janko {{\cite[Theorem 2.2]{janko2005}}}]\label{t:jankoCPWmetacyclic} Let $P$ be a finite $2$-group with three involutions. If $P$ contains a normal subgroup $W$ isomorphic with $C_4\times C_4$, then $\C_P(W)$ is metacyclic and $\Omega_2(\C_P(W))=W$.
\end{thm}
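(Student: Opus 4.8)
The plan is to prove Janko's Theorem~\ref{t:jankoCPWmetacyclic}, so let $P$ be a finite $2$-group with exactly three involutions, and let $W \cong C_4 \times C_4$ be a normal subgroup of $P$. Write $C = \C_P(W)$. The first step is to pin down the involutions: since $W \normal P$ is abelian, its three involutions are permuted by $P$, and since $\Omega_1(W) \cong V_4$ already accounts for all three involutions of $W$, these are exactly the three involutions of $P$ (any involution of $P$ either lies in $W$ or, together with $\Omega_1(W)$, would generate an elementary abelian group of rank $\geq 3$ inside $P$ unless it centralizes $\Omega_1(W)$ — one must check carefully that an involution $t \notin W$ still forces extra involutions; the cleanest route is that $\langle t \rangle \Omega_1(W)$ is a $2$-group acting on $V_4 = \Omega_1(W)$, and as $\Aut(V_4) \cong S_3$ has $2$-part of order $2$, $t$ centralizes a nontrivial element of $\Omega_1(W)$, giving a $V_4$ containing $t$ and hence, if $t \notin \Omega_1(W)$, a fourth involution — contradiction). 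Thus every involution of $P$ lies in $W$, and in particular in $C$.

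Next I would analyze $C = \C_P(W)$ directly. Since $W \leq \Z(C)$ and $W$ is not cyclic, $C$ contains the noncyclic abelian group $W$; but $C$ has at most three involutions (those of $W$), so every abelian subgroup of $C$ is either cyclic or has its involutions inside $\Omega_1(W)$. The key structural claim is that $C$ is metacyclic. One approach: show $\Omega_1(C) = \Omega_1(W) \cong V_4$ (immediate from the previous paragraph) and that $C/\Omega_1(W)$ has at most one involution, i.e.\ is cyclic or generalized quaternion; combined with $C$ having a normal $C_4 \times C_4$, this pins $C$ down. Alternatively, invoke a classification of $2$-groups whose subgroup $\Omega_1$ is $V_4$: such a group is metacyclic unless it is one of a short list of exceptions (wreathed, etc.), and those exceptions either have more than three involutions or fail to contain $C_4 \times C_4$ normally. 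A more self-contained argument: pick $x \in W$ of order $4$; then $\langle x \rangle$ has index $4$ in $W$; build a cyclic normal subgroup of $C$ of index $2$ using the fact that $C$ modulo any of its three central involutions has few involutions, then quotient. The most delicate point here is ruling out the configuration where $C$ is, say, a central product $Q_8 \ast C_{2^n}$ or similar — these do contain $C_4 \times C_4$ but one must verify whether they can occur with $W$ normal in the *whole* of $P$ and with only three involutions overall; here the hypothesis that $W \normal P$ (not merely $W \normal C$) and the three-involution count on $P$ should eliminate them.

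Finally, for the statement $\Omega_2(C) = W$: clearly $W \leq \Omega_2(C)$ since $W = C_4 \times C_4$ has exponent $4$. For the reverse, once $C$ is known to be metacyclic with $\Omega_1(C) = V_4$, one examines the metacyclic structure: a metacyclic $2$-group with Klein four $\Omega_1$ is (by the classification of metacyclic $2$-groups, or by Theorem~\ref{t:metacyclic}-style analysis) of a very restricted shape, and its $\Omega_2$ — the subgroup generated by elements of order dividing $4$ — is forced to be the unique $C_4 \times C_4$ it contains, which must be $W$ since $W \normal P$ is characteristic-ish in $C$ (indeed $W$ is characteristic in $C$ once we know it equals $\Omega_2(C)$, closing the loop, so the real content is the inclusion $\Omega_2(C) \leq W$). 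This follows by writing $C = \langle u \rangle \langle v \rangle$ with $\langle u \rangle$ cyclic normal, noting $W \cap \langle u \rangle$ has order at most $4$, and checking that any element of order $4$ in $C$ lies in $\langle u^{2^{k}}, v^{2^{l}}\rangle = W$ for the appropriate exponents.

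The main obstacle I anticipate is the middle step: proving $C = \C_P(W)$ is metacyclic. The constraint "three involutions in $P$" is global, but $\C_P(W)$ could a priori be a larger group whose metacyclicity is not obvious from the local data; one genuinely needs either a known classification of $2$-groups with $\Omega_1 \cong V_4$ (e.g.\ a result of Janko or the standard reference that such groups are metacyclic, modular, dihedral of order $\geq 16$, or a handful more) together with a case-check that only the metacyclic option is compatible with containing $C_4 \times C_4$ and having at most three involutions, or a direct inductive argument on $|C|$ peeling off central involutions. Getting the case analysis clean — and correctly handling the central-product groups $Q_8 \ast C_{2^n}$ which sit right at the boundary — is where the real work lies.
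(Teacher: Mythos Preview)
The paper does not prove this theorem at all: it is quoted verbatim from Janko's paper \cite[Theorem~2.2]{janko2005} and used as a black box. There is therefore nothing in the present paper to compare your proposal against.

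On the substance of your sketch: your first step (that every involution of $P$ already lies in $\Omega_1(W)$) is correct and your argument for it is fine. Your identification of the crux --- showing that $\C_P(W)$ is metacyclic --- is also accurate, and you are right that this is where essentially all of the work sits. However, none of the three routes you outline is carried far enough to constitute a proof. The suggestion that ``$C/\Omega_1(W)$ has at most one involution'' is not obviously true and would itself need a serious argument; the appeal to a classification of $2$-groups with $\Omega_1\cong V_4$ is circular in spirit, since Janko's theorem is part of exactly that programme; and the hand-built ``pick $x\in W$ of order $4$ and build a cyclic normal subgroup'' idea is too vague to assess. Your worry about central products $Q_8\ast C_{2^n}$ is well placed --- these have $\Omega_1\cong V_4$ and contain $C_4\times C_4$, so distinguishing them genuinely requires the three-involution hypothesis on all of $P$, not just on $C$.

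In short: the paper offers no proof to compare with, and your proposal is an honest outline that correctly locates the difficulty but does not resolve it. If you want to produce a self-contained proof you will need to consult Janko's original argument or supply a full inductive proof of the metacyclicity step yourself.
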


The next result is well known, and we include a short proof because we need slightly more information than usual, namely that the three involutions in the maximal subgroup are all central.

\begin{lem}\label{l:2rank2threeinvos} If $P$ is a $2$-group of $2$-rank $2$, then there is a subgroup $Q$ of index at most $2$ such that $Q$ has three involutions, all central in $Q$, or $P$ is dihedral or semidihedral, in which case there is a cyclic maximal subgroup $Q$.
\end{lem}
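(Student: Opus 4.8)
The plan is to split on whether $P$ has a normal subgroup isomorphic to $V_4$. If it does, say $W \normal P$ with $W \cong V_4$, I would take $Q = \C_P(W)$: since $P/Q$ embeds in $\Aut(W) \cong S_3$ and is a $2$-group, $|P:Q| \leq 2$, and since $Q$ centralizes the abelian group $W$ one has $W \leq \Z(Q)$. As $Q$ has $2$-rank at most $2$, any involution $t \in Q$ gives an abelian group $\gen{t}W$ of rank at most $2$, whence $t \in W$; hence $Q$ has exactly the three involutions of $W$, and all of them are central in $Q$. This is the required subgroup, and it settles the first alternative.

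For the case where $P$ has no normal $V_4$, the plan is to prove that $P$ is cyclic, dihedral, semidihedral or generalized quaternion; since $P$ has $2$-rank $2$ it is then not cyclic or generalized quaternion (unique involution), so it is dihedral or semidihedral and has a cyclic maximal subgroup $Q$ by \cite[Theorem 5.4.5]{gorenstein}. To establish the claim, I would let $A$ be a maximal normal abelian subgroup, so $\C_P(A) = A$; if $A$ were non-cyclic, $\Omega_1(A)$ would be elementary abelian of rank $2$ (rank $\geq 2$ as $A$ is non-cyclic, exactly $2$ as $P$ has $2$-rank $2$) and characteristic in $A$, giving a normal $V_4$ — contradiction. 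So $A = \gen{a} \cong C_{2^m}$ is cyclic and $P/A$ embeds in the abelian group $\Aut(C_{2^m})$. Next I would show $|P:A| \leq 2$: otherwise $P/A$ has order $\geq 4$, forcing $m \geq 3$, and its image contains the involution $\theta\colon a \mapsto a^{1+2^{m-1}}$ of $\Aut(C_{2^m}) \cong C_2 \times C_{2^{m-2}}$, since every subgroup of order $\geq 4$ of this group contains $\theta$. Picking $g \in P$ with $gA$ of order $2$ in $P/A$ and $g$ inducing $\theta$, and adjusting $g$ by a power of $a$ so that $g^2 = 1$, the subgroup $\gen{a^2, g}$ is abelian (as $\theta$ fixes $a^2$), isomorphic to $C_{2^{m-1}} \times C_2$, hence non-cyclic; and it is normal in $P$, because $\gen{a^2}$ is characteristic in $A$ while for $h \in P$ one has $hgh^{-1} \in gA$ (as $P/A$ is abelian), and $hgh^{-1}$ being an involution forces $hgh^{-1} \in \gen{a^2, g}$ after a short modular computation. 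Then $\Omega_1(\gen{a^2,g})$ is a normal $V_4$, a contradiction. So $|P:A| \leq 2$, $P$ has a cyclic subgroup of index at most $2$, and by \cite[Theorem 5.4.4]{gorenstein} $P$ is cyclic, $C_{2^m} \times C_2$, $D_{2^{m+1}}$, $SD_{2^{m+1}}$, $Q_{2^{m+1}}$ or $\Modul_{m+1}$; the groups $C_{2^m}\times C_2$ and $\Modul_{m+1}$ are excluded because each has a characteristic subgroup isomorphic to $V_4$ (namely $\Omega_1$ of the group, respectively $\Omega_1$ of its noncyclic abelian maximal subgroup), leaving exactly the four asserted families.

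The hardest point is the second case — proving that a $2$-group of $2$-rank $2$ with no normal $V_4$ is dihedral or semidihedral. This is a classical structural fact that could simply be cited (e.g.\ from \cite{gorenstein} or \cite{huppert}), but if one wants a self-contained argument the delicate step is ruling out $|P:A| \geq 4$: this forces one to work with the structure of $\Aut(C_{2^m})$ and to verify the two short computations flagged above (that the relevant coset of $A$ contains an involution, and that conjugates of $g$ remain inside $\gen{a^2,g}$). These are routine modular-arithmetic checks, but they must be carried out carefully, since they are precisely what fails for the two excluded groups $C_{2^m} \times C_2$ and $\Modul_{m+1}$.
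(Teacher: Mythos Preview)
Your proof is correct and follows essentially the same strategy as the paper: split on whether $P$ has a normal $V_4$, and in the affirmative case take $Q=\C_P(W)$. The only difference is that where the paper disposes of the negative case in one line by citing \cite[Theorem 5.4.10]{gorenstein} (a $2$-group with no noncyclic normal abelian subgroup is cyclic, dihedral, semidihedral or generalized quaternion), you reprove this fact by hand via the maximal normal abelian subgroup $A$ and the structure of $\Aut(C_{2^m})$. Your self-contained argument is sound --- in particular, the two flagged checks go through: $g^2=a^k$ forces $k$ even (else $\langle a,g\rangle=\langle g\rangle$ is cyclic, contradicting $g\notin\C_P(a)$), and then $k+j(2+2^{m-1})\equiv 0\pmod{2^m}$ is solvable; the normality of $\langle a^2,g\rangle$ follows since $hgh^{-1}=ga^s$ with $s\equiv 0$ or $2^{m-1}\pmod{2^m}$. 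So your version trades a citation for about a page of modular arithmetic, which is a reasonable choice if self-containment is the goal.
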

\begin{proof} By \cite[Theorem 5.4.10]{gorenstein}, if $P$ is not dihedral or semidihedral then $P$ contains a noncyclic normal abelian subgroup, $Y$. As $Y$ is noncyclic, it must have $2$-rank $2$ and since it is normal and abelian, $X = \Omega_1(Y) \cong V_4$ is normal in $P$. Let $Q = \C_P(X)$; as $Q$ commutes with $X$, the three involutions in $X$ are the only involutions in $Q$. Finally, $[P:Q] = [P:\C_P(X)] = |\Aut_P(X)| \leq 2$.
\end{proof}

The following trivial lemma will be useful at several points.

\begin{lem}\label{l:threeorseven} Let $Q$ be a normal subgroup of a $2$-group $P$ such that $P/Q\cong V_4$ and such that each maximal subgroup of $P$ containing $Q$ possesses three involutions. If $Q$ has a single involution, then $P$ has seven involutions. Otherwise, both $Q$ and $P$ have three involutions.
\end{lem}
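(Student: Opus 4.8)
The plan is to work with the three maximal subgroups $M_1$, $M_2$, $M_3$ of $P$ that contain $Q$: these are precisely the preimages in $P$ of the three subgroups of order $2$ of $P/Q\cong V_4$, and there are exactly three of them since $V_4$ has exactly three maximal subgroups. Two elementary facts about $V_4$ drive everything. First, the three subgroups of order $2$ cover $V_4$, so $P=M_1\cup M_2\cup M_3$; hence every involution of $P$ lies in some $M_i$. Second, distinct subgroups of order $2$ in $V_4$ intersect trivially, so $M_i\cap M_j=Q$ for $i\neq j$ (both contain $Q$, and their image in $V_4$ is trivial); hence an involution of $P$ lies in two of the $M_i$ if and only if it lies in $Q$. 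I would also note at the outset that $Q\neq 1$ — otherwise each $M_i\cong C_2$ would have a single involution, contrary to hypothesis — so $Q$, being a nontrivial $2$-group, contains at least one involution, and since $Q\leq M_1$ it contains at most three.

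If $Q$ has exactly one involution $z$, the argument is a direct count. For each $i$, the two involutions of $M_i$ other than $z$ cannot lie in $Q$ (that would give $Q$ a second involution), and so by the intersection fact they lie in no $M_j$ with $j\neq i$. Since every involution of $P$ lies in some $M_i$, the involutions of $P$ are exactly $z$ together with the two ``private'' involutions of each of $M_1$, $M_2$, $M_3$; these seven are pairwise distinct, so $P$ has seven involutions.

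If instead $Q$ has more than one involution, I would first pin down that it has exactly three. Picking distinct involutions $t_1,t_2$ of $Q$, the subgroup $\langle t_1,t_2\rangle\leq Q$ is dihedral of order $2\cdot o(t_1t_2)\geq 4$ (with $o(t_1t_2)$ a power of $2$ since $Q$ is a $2$-group), and any such group has at least three involutions: it is either a Klein four-group, or a dihedral group of order at least $8$, which contains its central involution together with at least two reflections. As $Q\leq M_1$ and $M_1$ has exactly three involutions, $Q$ has exactly three, and these coincide with the involutions of $M_1$, and likewise with those of $M_2$ and $M_3$. Since every involution of $P$ lies in some $M_i$, every involution of $P$ lies in $Q$, whence $Q$ and $P$ both have exactly three involutions. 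I do not expect a genuine obstacle here; the only point that is not purely formal is the standard observation that a group generated by two distinct involutions is dihedral with at least three involutions, which is immediate.
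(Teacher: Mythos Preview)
Your proof is correct and follows essentially the same approach as the paper's: cover $P$ by the three maximal subgroups over $Q$, note their pairwise intersection is $Q$, and count. You are simply more careful than the paper in explicitly ruling out the possibility that $Q$ has exactly two involutions (the paper silently uses that a nontrivial $2$-group has an odd number of involutions, which would also spare you the dihedral argument).
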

\begin{proof} Every element of $P$ lies in one of its maximal subgroups, and so if $Q$ has a single involution, each of the maximal subgroups contains two involutions not in $Q$ and hence $P$ has seven involutions. If $Q$ has three involutions, then these are all the involutions in each maximal subgroup and hence in $P$.  
\end{proof}

We will denote by $\mc P$ the set of all pairs $(P,\phi)$, where $P$ is a $2$-group from Theorems \ref{t:oddorderauts} and \ref{t:oddorderauts2}, and $\phi$ is a nontrivial odd-order automorphism of $P$. It is easy to determine the action of $\phi$ on a $2$-group $P$: in all cases apart from $C_{n,n}$, $Q_{3,3}$, $Q_8\wr C_2$, $QD_{3,3}^*$ with $o(\phi)=5$, and the group $\Suz$, the group is a central product of a $2$-group and the group $Q_8$. Here, the automorphism has order $3$, and acts on $Q_8$ in the obvious way and acts trivially on the other factor of the central product. If $P$ is abelian then the action is obvious, and if $P\cong Q_{3,3}$ then we get the action just described, and also an automorphism acting diagonally on both factors; this automorphism lifts to an automorphism of $Q_8\wr C_2$, which describes this case. The final cases are $QD_{3,3}^*$ and $\Suz$, which are two small groups and hence amenable to computation via Magma.

Using a computer, we verify Theorems \ref{t:oddorderauts} and \ref{t:oddorderauts2} for groups of order up to $2^9$. This will avoid any complications due to the orders of groups being small.

\begin{prop}\label{p:smallorder} Let $P$ be a finite $2$-group of order at most $2^9$ and suppose that $P$ has $2$-rank $2$. If $\Aut(P)$ is not a $2$-group, then $P$ is on the lists given in Theorems \ref{t:oddorderauts} and \ref{t:oddorderauts2}.
\end{prop}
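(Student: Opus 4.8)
The plan is to settle this entirely by machine computation, since for groups of order at most $2^9$ the small groups library and \textsf{Magma} (or \textsf{GAP}) make the verification completely routine. First I would iterate over all isomorphism types of $2$-groups $P$ with $|P| \leq 2^9$ using the small groups database; for each such $P$, compute $|\Aut(P)|$ and discard $P$ if $\Aut(P)$ is a $2$-group (equivalently, if $3 \nmid |\Aut(P)|$ and $5 \nmid |\Aut(P)|$, etc.\ --- in fact it suffices to check that $|\Aut(P)|$ is a power of $2$). For the survivors, compute the $2$-rank of $P$, i.e.\ the maximum of $\log_2 |E|$ over elementary abelian subgroups $E \leq P$; discard those with $2$-rank $\neq 2$. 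What remains is a finite explicit list, and the task is to check that each group on it appears among the groups listed in Theorems~\ref{t:oddorderauts} and~\ref{t:oddorderauts2}.

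The second step is the identification: for each surviving $P$, I would compute its number of involutions (to decide whether Theorem~\ref{t:oddorderauts} or Theorem~\ref{t:oddorderauts2} is the relevant one), and then match $P$ against the named groups --- $QC_{3,n}$, $Q_{3,n}$, $C_{n,n}$, $\GA_n$, $\GB_n$, $\Suz$, $Q_8 \ast C_{2^n}$, $Q_8 \ast D_{2^n}$, $Q_8 \wr C_2$ --- by constructing each named group of the appropriate order (the presentations are given explicitly in Section~\ref{sec:oddorderautos}) and testing for isomorphism with \texttt{IsIsomorphic}. Since all of these named families have order $\geq 2^n$ with small $n$, only finitely many members of each family have order $\leq 2^9$, so this is a finite check. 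I would also record, for each match, the order(s) of the odd-order automorphisms, confirming they agree with the stated values of $o(\phi)$ (order $3$ in all cases except $\Suz$, which also admits an automorphism of order $5$, and $QD_{3,3}^*$, which admits one of order $5$).

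I do not anticipate a genuine mathematical obstacle here: the entire content is a finite computation, and the only practical concern is the size of the search space --- there are on the order of $10^8$ groups of order $2^9$, so a naive enumeration is infeasible. The main point of care is therefore to organize the search so that it is tractable: rather than enumerating all of order $2^9$, one restricts early using cheap invariants (number of involutions, $2$-rank, and the condition $2 \neq |\Aut(P)|_{2'}$, which can be read off without computing the full automorphism group by examining the action on $P/\Phi(P)$ together with a Burnside-type reduction as in Corollary~\ref{c:omnibus}). In fact, the later sections will reduce the genuinely hard cases to much smaller orders, so in practice it suffices to push the computation only as far as is needed to clear the base cases of the inductions in the proofs of Theorems~\ref{t:oddorderauts} and~\ref{t:oddorderauts2}; the bound $2^9$ is chosen generously so that no inductive step ever appeals to a group whose order is too small for the structural arguments to apply.
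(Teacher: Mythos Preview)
Your proposal is correct and matches the paper's approach: the paper's proof consists of a single sentence stating that this is an easy verification in \textsf{Magma}, followed by a table recording the relevant SmallGroup database identifiers for each order from $2^4$ to $2^9$. Your discussion of filtering strategies to cope with the roughly $10^7$ groups of order $2^9$ is a reasonable practical elaboration that the paper omits, but the underlying method is the same.
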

\begin{proof} This is an easy verification using the computer algebra package Magma. The exact entries in the SmallGroup database are as in Table \ref{smallgroups1}.
\begin{table}
\begin{center}\begin{tabular}{cc|c|}
\multicolumn{3}{c}{\multirow{2}{*}{\textbf{Three involutions}}}
\\
\\ \hline \multicolumn{1}{|c|}{Order} & SmallGroup & Isomorphism Types
\\ \hline  \multicolumn{1}{|c|}{16} & 2, 12 & $C_{2,2}$, $QC_{3,1}$
\\  \multicolumn{1}{|c|}{32} & 26 & $QC_{3,2}$
\\  \multicolumn{1}{|c|}{64} & 2, 126, 238, 239, 245 & $C_{8,8}$, $QC_{3,3}$, $\GB_6$, $Q_{3,3}$, $\Suz$
\\  \multicolumn{1}{|c|}{128} & 914, 2095, 2114, 2127 & $QC_{3,4}$, $\GA_7$, $Q_{3,4}$, $\GB_7$
\\  \multicolumn{1}{|c|}{256} & 39, 6647, 26918, 26937, 26950 & $C_{4,4}$, $QC_{3,5}$, $\GA_8$, $Q_{3,5}$, $\GB_8$
\\  \multicolumn{1}{|c|}{512} & 60795, 420460, 420479, 420492& $QC_{3,6}$, $\GA_9$, $Q_{3,6}$, $\GB_9$
\\ \hline
\\ \multicolumn{3}{c}{\multirow{2}{*}{\textbf{More than three involutions}}}
\\
\\ \hline  \multicolumn{1}{|c|}{Order} & SmallGroup & Isomorphism Types
\\ \hline  \multicolumn{1}{|c|}{16} & 13  & $QC_{3,2}^*$
\\  \multicolumn{1}{|c|}{32} &  38, 50 & $QC_{3,3}^*$, $QD_{3,3}^*$
\\  \multicolumn{1}{|c|}{64} & 185, 259 & $QC_{3,4}^*$, $QD_{3,4}^*$
\\  \multicolumn{1}{|c|}{128} & 937, 990, 2149 & $Q_8\wr C_2$, $QC_{3,5}^*$, $QD_{3,5}^*$
\\  \multicolumn{1}{|c|}{256} & 6725, 26972 & $QC_{3,6}^*$, $QD_{3,6}^*$
\\  \multicolumn{1}{|c|}{512} & 60897, 420514& $QC_{3,7}^*$, $QD_{3,7}^*$
\\ \hline
\end{tabular}\end{center}
\caption{Table of small groups}\label{smallgroups1}
\end{table}
\end{proof}

We use the following elementary, but useful, result in the proofs of Proposition \ref{p:nophiinvmaxs}, Lemma \ref{l:no57} and Proposition \ref{p:4gen3genphiinv}.

\begin{prop}\label{p:4dimvec} Let $V$ be a $4$-dimensional vector space over $\mathbb{F}_2$ and let $g\in\GL(V)$. 
\begin{enumerate}
\item If $o(g)=3$ then either $g$ acts fixed point freely, or $g$ fixes three points that lie in a subspace of dimension $2$ (i.e., they
are not linearly independent).
\item If $o(g)=5$ then $g$ acts fixed point freely.
\end{enumerate}
\end{prop}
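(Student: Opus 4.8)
The plan is to exploit semisimplicity. Since $\gcd(o(g),2)=1$, Maschke's theorem makes the $\mathbb{F}_2[\gen{g}]$-module $V$ a direct sum of simple modules, and then one only has to run through the short list of possibilities. First I would recall the classification of simple modules for the relevant cyclic groups over $\mathbb{F}_2$. The simple $\mathbb{F}_2[C_3]$-modules are the trivial module $T$ of dimension $1$ and a module $S$ of dimension $2$, realised as $\mathbb{F}_4$ with $g$ acting as multiplication by a primitive cube root of unity (corresponding to the irreducible factor $x^2+x+1$ of $x^3-1$ over $\mathbb{F}_2$). The simple $\mathbb{F}_2[C_5]$-modules are the trivial module $T$ and a single module of dimension $4$, realised as $\mathbb{F}_{16}$ with $g$ acting as multiplication by a primitive fifth root of unity (corresponding to the fact that $\Phi_5(x)=x^4+x^3+x^2+x+1$ is irreducible over $\mathbb{F}_2$). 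On each nontrivial summand $g$ has no nonzero fixed vector, since multiplication by a nontrivial root of unity in a field is fixed-point-free.

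For part (1), write $V\cong T^{\oplus a}\oplus S^{\oplus b}$ with $a+2b=4$. Since $g$ is nontrivial we must have $b\geq 1$, so $(a,b)\in\{(2,1),(0,2)\}$. If $(a,b)=(0,2)$ then $g$ acts fixed-point-freely. If $(a,b)=(2,1)$ then the fixed space of $g$ is the subspace $T^{\oplus a}$, which has dimension $2$, and its three nonzero vectors are exactly the points fixed by $g$; as they all lie in a $2$-dimensional subspace, they are not linearly independent. For part (2), writing $V$ as a direct sum of simples with dimensions summing to $4$ and using that the only nontrivial simple has dimension $4$, the only options are $V\cong T^{\oplus 4}$ (excluded, since $o(g)=5\neq 1$) or $V$ itself simple of dimension $4$; in the latter case $g$ acts fixed-point-freely.

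There is essentially no obstacle here; the only point needing any care is the identification of the simple modules: namely that $x^2+x+1$ is irreducible over $\mathbb{F}_2$ (immediate) and that $\Phi_5(x)$ is irreducible over $\mathbb{F}_2$, which holds because the multiplicative order of $2$ modulo $5$ equals $\phi(5)=4$. Alternatively one may avoid module theory: the minimal polynomial of $g$ divides $x^{o(g)}-1$ and, as $o(g)\neq 1$, is divisible by the relevant cyclotomic polynomial. When $o(g)=5$, degree considerations in the $4$-dimensional space $V$ force the minimal polynomial to equal $\Phi_5$, which is coprime to $x-1$, so $1$ is not an eigenvalue of $g$ and $g$ is fixed-point-free. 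When $o(g)=3$, the primary decomposition $V=\ker(g-1)\oplus\ker(g^2+g+1)$ produces the two cases: the second summand is a module over $\mathbb{F}_2[g]/(g^2+g+1)\cong\mathbb{F}_4$, hence of $\mathbb{F}_2$-dimension $2$ or $4$ (it is nonzero because $o(g)=3$), leaving $\ker(g-1)$ of dimension $0$ or $2$ respectively.
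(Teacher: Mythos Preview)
Your argument is correct and takes a genuinely different route from the paper. The paper argues by explicit computation: it invokes the isomorphism $\GL_4(2)\cong A_8$ to see that there are two conjugacy classes of elements of order $3$ and one of order $5$, writes down explicit matrix representatives for each class, and then computes their fixed vectors by hand. Your approach instead uses Maschke's theorem to decompose $V$ as a semisimple $\mathbb{F}_2[\gen g]$-module and reads off the fixed space from the multiplicity of the trivial summand. This is cleaner and conceptually more robust: it does not depend on the accidental isomorphism with $A_8$, and it generalizes immediately to other dimensions and to other primes coprime to the characteristic. The paper's approach, on the other hand, is entirely self-contained at the level of linear algebra and avoids appealing to Maschke or to the irreducibility of cyclotomic polynomials.

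One small slip in your final sentence: the correspondence is reversed. If $\ker(g^2+g+1)$ has $\mathbb{F}_2$-dimension $2$ then $\ker(g-1)$ has dimension $2$, and if $\ker(g^2+g+1)$ has dimension $4$ then $\ker(g-1)$ has dimension $0$; so ``dimension $0$ or $2$ respectively'' should read ``dimension $2$ or $0$ respectively''. This does not affect the main Maschke argument, which is already complete before that paragraph.
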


\begin{proof}
Notice that $\GL_4(2)$, the set of all linear transformations of $V$, is isomorphic with $A_8$. Hence there are two conjugacy classes of
elements of order $3$ and a single conjugacy class of elements of order $5$. Let $v_1$, $v_2$, $v_3$ and $v_4$ form a basis for $V$. If $g$ has order $3$ then there are two possibilities: either $g$ acts by sending $v_1\mapsto v_2$, $v_2\mapsto v_3$, $v_3\mapsto v_1$ and
fixing $v_4$,  or by sending $v_i\mapsto v_{i+1}$ and $v_{i+1}\mapsto v_i+v_{i+1}$, for $i \in \{1,3\}$. The latter action has no fixed
points while the former has three: $v_4$, $v_1+v_2+v_3$ and $v_1+v_2+v_3+v_4$; these form the three nonzero elements of a
$2$-dimensional subspace. If $g$ has order $5$, then $g$ (is conjugate to a map that) sends $v_i$ to $v_{i+1}$ for
$i \in \{1,2,3\}$ and sends $v_4$ to $v_1+v_2+v_3+v_4$; this map has no fixed points.
\end{proof}

The remainder of the proof is structured as follows. We first examine the case where $\phi$ fixes no maximal subgroup of $P$. Note that there is a $\phi$-invariant bijection between the subspaces of $P/\Phi(P)$ with dimension $k$ and those with codimension $k$ since $\phi$ is conjugate to an orthogonal transformation, which commutes with taking orthogonal complements. In particular, $\phi$ fixes no maximal subgroup of $P$ if and only if $\phi$ acts fixed-point freely on $P/\Phi(P)$. We use this several times in the next proof in order to employ Lemma \ref{l:fpfaction}. We then attack the situation where there is a $\phi$-invariant maximal subgroup of $P$ in four steps: reduce to the case where $P$ is $3$-generator or $4$-generator and $o(\phi) = 3$; prove the result when $P$ is $3$-generator; prove the result when $P$ is $4$-generator and has a $3$-generator $\phi$-invariant maximal subgroups; and, finally, prove that when $P$ is 4-generator, it does have a $3$-generator $\phi$-invariant maximal subgroup.

\begin{prop}\label{p:nophiinvmaxs} Let $P$ be a finite $2$-group with $2$-rank $2$ such that $\Aut(P)$ is not a $2$-group. If $\phi$ is a nontrivial odd-order automorphism of $P$ and no maximal subgroup of $P$ is $\phi$-invariant, then $P$ is on the lists given in Theorems \ref{t:oddorderauts} and \ref{t:oddorderauts2}. Specifically, $P$ is $2$-generator or $4$-generator and either 
\begin{enumerate}
\item $o(\phi)=3$ and $P$ is isomorphic to $C_{n,n}$ or $\Suz$, or
\item $o(\phi)=5$ and $P$ is isomorphic to $\Suz$ or $QD_{3,3}^*$. 
\end{enumerate}
\end{prop}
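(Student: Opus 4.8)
The plan is to convert the hypothesis into a statement about the action of $\phi$ on $V=P/\Phi(P)$, bound $\dim V$ using the $2$-rank, and then run a short case analysis keyed to the possible orders of $\phi$.

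First I would record the standing reductions. As observed just before the statement, $\phi$ is conjugate in $\GL(V)$ to an orthogonal transformation, so it fixes a subspace of dimension $k$ precisely when it fixes one of codimension $k$; taking $k=1$, the condition that no maximal subgroup of $P$ is $\phi$-invariant is equivalent to $\phi$ acting fixed-point-freely on $V$. By MacWilliams's four-generator theorem, a $2$-group of $2$-rank $2$ has no normal elementary abelian subgroup of order $8$, hence has sectional $2$-rank at most $4$, hence is generated by at most $4$ elements, so $d:=\dim_{\mathbb{F}_2}V\le 4$. By Burnside's theorem the kernel of $\Aut(P)\to\Aut(V)$ is a $2$-group, so the odd-order automorphism $\phi$ acts faithfully on $V$; thus $\phi$ induces an element of odd order in $\GL_d(2)$, with $d\le 4$, having no nonzero fixed vector.

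Next I would enumerate the pairs $(d,o(\phi))$. The case $d=1$ is vacuous, so $d\ge 2$ and $P$ has at least two generators. For $d=2$, $\GL_2(2)\cong S_3$ has a single class of nontrivial odd-order elements, of order $3$, acting as a $3$-cycle on the three nonzero vectors, hence fixed-point-freely; so $o(\phi)=3$ and $P$ is $2$-generated. For $d=3$, $\GL_3(2)$ has odd-order elements only of orders $3$ and $7$, and an order-$3$ element has a $1$-dimensional fixed space (orbits $1+3+3$ on the nonzero vectors), so only order $7$ survives. For $d=4$, $\GL_4(2)\cong A_8$ has odd-order elements of orders $3,5,7,15$ only; by Proposition \ref{p:4dimvec} an order-$3$ element is fixed-point-free or fixes a $2$-space, an order-$5$ element is fixed-point-free, while an order-$7$ element (a $7$-cycle in $A_8$) has orbits $1+7+7$ on the nonzero vectors and so fixes a point, leaving $o(\phi)\in\{3,5,15\}$. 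Finally, if $o(\phi)=15$ then faithfulness forces $V\cong\mathbb{F}_{16}$ with $\langle\phi\rangle\le\mathbb{F}_{16}^{\times}$, so $\phi^{5}$ (order $3$) and $\phi^{3}$ (order $5$) are again fixed-point-free on $V$, hence also fix no maximal subgroup of $P$; applying the rest of the argument to those two powers and intersecting the conclusions disposes of the order-$15$ case. So it suffices to treat $o(\phi)$ prime, in $\{3,5,7\}$, with $P$ respectively $2$-generated ($o(\phi)=3$), $3$-generated ($o(\phi)=7$), or $4$-generated ($o(\phi)\in\{3,5\}$).

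For the structural step the key tools are Mazurov's Lemma \ref{l:fpfaction}, Higman's Theorem \ref{t:Higman}, Janko's Theorem \ref{t:jankoCPWmetacyclic}, and Proposition \ref{p:smallorder}. Since $\Aut(P)$ is not a $2$-group and $P\not\cong Q_8$ (which has $2$-rank $1$), Lemma \ref{l:2rank2threeinvos} shows $P$ is not dihedral or semidihedral and so has a subgroup of index at most $2$ with exactly three involutions, all central in it; as a $2$-group has an odd number of involutions and a unique-involution group has $2$-rank at most $1$, the group $P$ has $3,5,7,\dots$ involutions. I would split on whether $\phi$ fixes all involutions of $P$. \emph{If it does} --- forced whenever the number of involutions is not divisible by $o(\phi)$, and in particular whenever $P$ has exactly three involutions with $o(\phi)\in\{5,7\}$, or exactly three with $o(\phi)=3$ acting trivially on the characteristic $V_4$ they span --- then Lemma \ref{l:fpfaction} makes $P$ special with $\Phi(P)=\Omega_1(P)=\Z(P)=P'$ elementary abelian; then $\Phi(P)$ has $2$-rank at most $2$, so $|\Phi(P)|\le 4$ and $|P|=2^{d}|\Phi(P)|\le 2^{6}$, and Proposition \ref{p:smallorder} identifies $P$ from the tables, none of whose groups admits an automorphism of order $7$; this kills $d=3$ (alternatively, $\phi$-equivariance of the commutator pairing $P/\Z(P)\times P/\Z(P)\to\Z(P)$ combined with the irreducible, hence invariant-free, action of an order-$7$ element of $\GL_3(2)$ on alternating forms forces that pairing to vanish, contradicting $P$ nonabelian), and in the $4$-generated case leaves $P\cong\Suz$ or $P\cong Q_8\times Q_8=Q_{3,3}$. \emph{If $\phi$ does not fix all involutions}, then $P$ is nonabelian with $\langle\phi\rangle$ transitive on a set of involutions --- exactly three of them when $P$ has exactly three, by Lemma \ref{l:2rank2threeinvos} --- so either $P$ is a Suzuki $2$-group, whence Higman's theorem and \cite[Lemma 2.2.6]{glsvol6} give $P\cong\Suz$, or $P$ has more than three involutions, where Janko's Theorem \ref{t:jankoCPWmetacyclic} together with the groups underlying Theorem \ref{t:oddorderauts2} reduce it to $P\cong QD_{3,3}^{*}$ with $o(\phi)=5$; in the $2$-generated case $P$ turns out abelian, hence $P\cong C_{2^n}\times C_{2^n}$ by Corollary \ref{c:omnibus}.\ref{cp:2genabeleven}, and the remaining $3$-generated sub-case ($P$ having at least seven involutions and $\phi$ cyclically permuting seven of them) is cleared by a short separate argument on $\Omega_1(P)$, or by induction on $|P|$ with Proposition \ref{p:smallorder} as base.

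I expect the genuine obstacle to be the $4$-generated case with $o(\phi)\in\{3,5\}$ and $P$ having more than three involutions: one must verify that among all the products $Q_8\ast C_{2^n}$, $Q_8\ast D_{2^n}$, $Q_8\times Q_{2^n}$, $Q_8\wr C_2$, and the groups $\GA_n$, $\GB_n$ occurring in Theorems \ref{t:oddorderauts} and \ref{t:oddorderauts2}, only $QD_{3,3}^{*}$ (and, when $o(\phi)=3$, $Q_8\times Q_8$) fails to possess a $\phi$-invariant maximal subgroup --- in every other case one exhibits such a subgroup, typically one containing the $\phi$-fixed part of $V$ --- and then match the surviving automorphism orders to the surviving groups. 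Proposition \ref{p:smallorder} is exactly what keeps this bookkeeping finite, by settling all orders up to $2^{9}$.
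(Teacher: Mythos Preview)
Your overall framework---bounding $d=\dim P/\Phi(P)$ via MacWilliams, enumerating the possible $(d,o(\phi))$ using Proposition~\ref{p:4dimvec}, and then invoking Lemma~\ref{l:fpfaction} and Proposition~\ref{p:smallorder} to cap $|P|$---matches the paper's strategy in broad outline. Your handling of the $2$- and $3$-generator cases is essentially correct, if telegraphic in places (e.g.\ ``in the $2$-generated case $P$ turns out abelian'' needs the paper's short argument that seven involutions would force $P$ dihedral).

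The genuine gap is exactly where you flag it: the $4$-generator case with more than three involutions. Your proposed resolution---``verify that among all the products $Q_8\ast C_{2^n}$, $Q_8\ast D_{2^n}$, $Q_8\times Q_{2^n}$, $Q_8\wr C_2$, $\GA_n$, $\GB_n$ only $QD_{3,3}^*$ fails to possess a $\phi$-invariant maximal subgroup''---is circular: those lists are precisely what Theorems~\ref{t:oddorderauts} and~\ref{t:oddorderauts2} assert, and Proposition~\ref{p:nophiinvmaxs} is a step in their proof, so you cannot assume $P$ is already on them. Janko's Theorem~\ref{t:jankoCPWmetacyclic} does not help either, since it concerns groups with exactly three involutions.

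The paper's argument here is structurally different from your involution dichotomy. One takes a maximal subgroup $Q$ with three central involutions (Lemma~\ref{l:2rank2threeinvos}) and sets $R=Q\cap Q\phi$. If $R$ has three involutions they are central in $\langle Q,Q\phi\rangle=P$, forcing $P$ itself to have three involutions; that sub-case is then handled by Higman/Mazurov and Proposition~\ref{p:smallorder} as you do. The remaining sub-case is that $R$ has a \emph{single} involution. One then argues: if $R$ were $\phi$-invariant, a nontrivial odd automorphism on a one-involution $2$-group forces $R\cong Q_8$ and $|P|=2^5$, contradicting Proposition~\ref{p:smallorder}; if $o(\phi)=3$, Proposition~\ref{p:4dimvec} shows $Q\cap Q\phi$ \emph{is} $\phi$-invariant, so in fact $o(\phi)=5$. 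Then $R$, being noncyclic with a unique involution, is generalized quaternion, whence $\Phi(P)=\Phi(R)$ is cyclic; if $|\Phi(P)|=2$ one lands on $QD_{3,3}^*$, and larger $\Phi(P)$ is excluded by passing to an order-$2^6$ quotient and a short computer check. This $R=Q\cap Q\phi$ mechanism is the idea you are missing, and nothing in your outline substitutes for it.
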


\begin{proof} If $P$ is $2$-generator then $\phi$ has order $3$ and permutes the maximal subgroups, all of which must have three involutions by Lemma \ref{l:2rank2threeinvos}. By Lemma \ref{l:threeorseven}, $P$ has either three or seven involutions. If it has seven, then $\Phi(P)$ has a single involution, so that $P$ is generated by two involutions; hence $P$ is a dihedral group, a contradiction as then $\Aut(P)$ is a $2$-group (also, no dihedral $2$-group has seven involutions). Hence, $P$ has three involutions. If $P$ is abelian, then $P\cong C_{n,n}$ by Corollary \ref{c:omnibus}.\ref{cp:2genabeleven}. Otherwise, $P$ is nonabelian, and $\phi$ either acts transitively on the involutions, so that $P$ is a Suzuki $2$-group, or $\phi$ acts trivially on the involutions. In either case, $P$ has order 16 (by Theorem \ref{t:Higman} and Lemma \ref{l:fpfaction}, respectively) and so, by Proposition \ref{p:smallorder}, is isomorphic to $C_{2,2}$ or $QC_{3,1}$, a contradiction since $C_{2,2}$ is abelian and $QC_{3,1}$ has nonisomorphic maximal subgroups.

If $P$ is $3$-generator, then there are seven maximal subgroups. Consequently, if $o(\phi)=3$, then there is a $\phi$-invariant maximal subgroup. We conclude that $o(\phi) = 7$ and that $\phi$ transitively permutes the maximal subgroups of $P$. By Lemma \ref{l:2rank2threeinvos}, again, all have exactly three involutions. The automorphism $\phi$ also permutes the seven normal subgroups with quotient $V_4$ and so, by Lemma \ref{l:threeorseven}, $P$ has three or seven involutions. If $\phi$ permutes the involutions transitively, then there must be seven involutions and the involutions are all central by Lemma \ref{l:2rank2threeinvos}; this implies the $2$-rank of $P$ is greater than 2, a contradiction. We conclude that $\phi$ acts trivially on the involutions. However, Lemma \ref{l:fpfaction} then implies that $\Omega_1(P)=\Z(P)$. In particular, the involutions of $P$ are all central, contradicting the $2$-rank of $P$ if there are seven involutions. If there are only three involutions, then $|P| = 32$ and, by Proposition \ref{p:smallorder}, $P \cong QC_{3,2}$, a contradiction since $QC_{3,2}$ has no automorphism of order 7 (or, alternatively, since not all of its maximal subgroups are isomorphic).


Finally, we assume that $P$ is $4$-generator. As $P$ possesses no $\phi$-invariant maximal subgroup and there are fifteen maximal subgroups, we conclude that $o(\phi)=3$ or $o(\phi)=5$. By Lemma \ref{l:2rank2threeinvos}, $P$ possesses a maximal subgroup $Q$ with three, central involutions. Let $R=Q\cap Q\phi$; if $R$ has three involutions then they are centralized by both $Q$ and $Q\phi$, so that $|\Omega_1(\Z(P))|\geq 4$. As $P$ has $2$-rank $2$, this means that either $P$ has three involutions or $R$ has a single involution.


Suppose first that $P$ has three involutions. On one hand, if $\phi$ acts nontrivially on these three involutions (so that $o(\phi)=3$), then it permutes them transitively, so that $P$ is a Suzuki $2$-group. By Theorem \ref{t:Higman}, $|P| = 2^6$ and so $P \cong \Suz$. On the other hand, if $\phi$ acts trivially on the involutions of $P$, then, by Lemma \ref{l:fpfaction}, $|P| = 2^6$ and so $(P, \phi) \in \mc P$ by Proposition \ref{p:smallorder}. One easily sees in this case that $P \cong Q_{3,3}$ with $o(\phi) = 3$ or that $P \cong \Suz$ with $o(\phi) = 5$.

We now assume that $R$ has a single involution. If $R$ is $\phi$-invariant, then $\phi$ induces a nontrivial automorphism on $R$ (as no subgroup of $P$ with index $8$ and containing $\Phi(P)$ is $\phi$-invariant). The only group with a single involution and with nontrivial automorphisms of odd order is $Q_8$, so that $P$ has order $2^5$, in contradiction with Proposition \ref{p:smallorder}; hence $R$ is not $\phi$-invariant. However, if $o(\phi)=3$, then Proposition \ref{p:4dimvec} makes it easy to see that the intersection of $Q$ and $Q\phi$ is $\phi$-invariant. Hence, $o(\phi)=5$.

As $P$ is $4$-generator, $R$ cannot be cyclic, so is generalized quaternion. Since $P$ is $4$-generator and $R$ is $2$-generator, $\Phi(P)=\Phi(R)\cong C_{2^n}$ is cyclic. If $n=1$ then $P$ has order $2^5$, and $P\cong QD_{3,3}^*$ by Proposition \ref{p:smallorder}. If $n\geq 2$, then let $X$ be the (characteristic) subgroup of index $4$ in $\Phi(P)$, and let $G=P/X$. The group $G$ has order $2^6$, has $4$-generators, has an automorphism of order $5$, and $\Phi(G)$ is cyclic. However, examining the groups of order $2^6$ using Magma, we find that there is no such $2$-group, and hence $P$ cannot exist if $n\geq 2$.
\end{proof}

Having dealt with the case where there are no $\phi$-invariant maximal subgroups, we may now assume that there is one. The following lemma reduces us to the case where $P$ is either $3$-generator or $4$-generator, and $o(\phi)=3$. Our proof uses the fact (see \cite[Theorem 5.3.2]{gorenstein}) that $\phi$ acts trivially on $Q$ if and only if $\phi$ acts trivially on $P$.

\begin{lem}\label{l:no57}
 Let $P$ be a finite $2$-group of $2$-rank $2$ with a nontrivial odd-order automorphism, $\phi$. If $P$ has a $\phi$-invariant maximal subgroup, then $P$ is $3$-generator or $4$-generator and $o(\phi) = 3$.
\end{lem}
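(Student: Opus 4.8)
The plan is to study the action of $\phi$ on the Frattini quotient $\bar P = P/\Phi(P)$. By MacWilliams's theorem $P$ is at most $4$-generator, so $\bar P \cong \mathbb{F}_2^d$ with $d \leq 4$; write $\bar\phi$ for the image of $\phi$ in $\GL(\bar P) \cong \GL_d(2)$, and recall from Burnside's theorem (\cite[Theorem 5.1.14]{gorenstein}) that $\langle\phi\rangle$ injects into $\GL_d(2)$, so $\bar\phi \neq 1$ and $o(\bar\phi) = o(\phi)$. Let $Q$ be the given $\phi$-invariant maximal subgroup. Since $\Phi(P) \leq Q$, the subgroup $\bar Q = Q/\Phi(P)$ is a $\bar\phi$-invariant hyperplane of $\bar P$, and by Maschke's theorem (the order of $\bar\phi$ is odd) we may write $\bar P = \bar Q \oplus T$ with $T$ a $\bar\phi$-invariant line; as $\GL_1(2) = 1$, $\bar\phi$ acts trivially on $T$. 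Consequently $o(\phi) = o(\bar\phi) = o(\bar\phi|_{\bar Q})$, and $\bar\phi|_{\bar Q} \neq 1$ since otherwise $\bar\phi = 1$.

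From $\bar\phi|_{\bar Q} \neq 1$ it follows that $\dim \bar Q \geq 2$, hence $d \in \{3,4\}$, i.e.\ $P$ is $3$- or $4$-generator. Since $o(\phi) = o(\bar\phi|_{\bar Q})$ is the order of an element of $\GL_{d-1}(2)$, we argue as follows: if $d = 3$ the element orders of $\GL_2(2) \cong S_3$ are $1,2,3$, so, being odd and nontrivial, $o(\phi) = 3$ and we are done; if $d = 4$ the element orders of $\GL_3(2) \cong \PSL_2(7)$ are $1,2,3,4,7$, so $o(\phi) \in \{3,7\}$, and it remains only to exclude $o(\phi) = 7$.

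To do this I would argue by minimal counterexample. Suppose $(P,\phi)$ is a counterexample of least order; by the above, $P$ is $4$-generator and $o(\phi) = 7$. Consider $Q$ together with the automorphism $\phi|_Q$. Then $\phi|_Q \neq 1$ (otherwise $\phi$ would act trivially on all of $P$, by \cite[Theorem 5.3.2]{gorenstein}), so $o(\phi|_Q)$ is a divisor of $7$ greater than $1$, hence equal to $7$; also $Q$ has $2$-rank at most $2$, and from $\Phi(Q) \leq \Phi(P)$ together with $Q/\Phi(P) = \bar Q \cong \mathbb{F}_2^3$ and MacWilliams, $Q$ is $3$- or $4$-generator. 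If $Q$ has a $\phi$-invariant maximal subgroup, then minimality applied to $(Q,\phi|_Q)$ gives $o(\phi|_Q) = 3$, a contradiction. If $Q$ has no $\phi$-invariant maximal subgroup, then Proposition \ref{p:nophiinvmaxs} applied to $(Q,\phi|_Q)$ forces $o(\phi|_Q) \in \{3,5\}$, again a contradiction. Hence no counterexample exists.

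The main obstacle is precisely the elimination of the case $d = 4$, $o(\phi) = 7$: unlike an element of order $5$, an element of order $7$ in $\GL_4(2)$ does fix a hyperplane (it is a Singer cycle on a $3$-dimensional subspace together with a trivial line), so no argument on $\bar P$ alone settles it, and one is forced to descend to the invariant maximal subgroup $Q$ and play the induction off against Proposition \ref{p:nophiinvmaxs}, whose conclusion crucially contains no $3$-generator $2$-group lacking a $\phi$-invariant maximal subgroup. A secondary point requiring care is verifying that $Q$ genuinely satisfies the hypotheses needed to invoke either the inductive statement or Proposition \ref{p:nophiinvmaxs} — in particular that $Q$ has $2$-rank at most $2$, that $Q$ is $3$- or $4$-generator, and that $\phi|_Q$ is nontrivial.
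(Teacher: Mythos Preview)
Your proof is correct and follows the same strategy as the paper's: reduce to the action on the Frattini quotient, rule out the $2$-generator case and $o(\phi)=5$, then eliminate $o(\phi)=7$ by a minimal-counterexample descent to the $\phi$-invariant maximal subgroup $Q$. The only cosmetic difference is that your Maschke splitting $\bar P=\bar Q\oplus T$ bounds $o(\phi)$ by element orders in $\GL_{d-1}(2)$ and thus handles $d=2$ and $o(\phi)=5$ in one stroke, whereas the paper treats these separately (citing Proposition~\ref{p:4dimvec} for order $5$); your explicit case split on whether $Q$ has a $\phi$-invariant maximal subgroup, invoking Proposition~\ref{p:nophiinvmaxs} when it does not, is a welcome clarification of a step the paper leaves implicit.
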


\begin{proof}
If $P$ is $2$-generator, then the three maximal subgroups of $P$ are all stabilized by $\phi$. In particular, $\phi$ acts trivially on $P/\Phi(P)$ and hence on $P$, a contradiction. Therefore, $P$ is $3$-generator or $4$-generator. If $o(\phi) = 5$, then $P$ is 4-generator and, by Proposition \ref{p:4dimvec}, $\phi$ acts fixed-point freely on the maximal subgroups of $P$, contradicting the assumption of a $\phi$-invariant maximal subgroup. Finally, let $P$ be a $2$-group of minimal order such that there exists an automorphism of $P$ of order $7$, and let $Q$ denote a $\phi$-invariant maximal subgroup of $P$. If $\phi$ centralizes $Q$, then $\phi$ is the identity on $P$, a contradiction.  Therefore, $\phi$ induces an automorphism of order $7$ on $Q$, contradicting the minimality of $P$.
\end{proof}

\begin{lem}\label{l:RisQ8} Let $P$ be a finite $2$-group of $2$-rank $2$ with a nontrivial odd-order automorphism, $\phi$. If $P$ has a $\phi$-invariant maximal subgroup $Q$, then $[P,\phi]=[Q,\phi]$ and $P=[Q,\phi]\C_P([Q,\phi])$. Moreover, if $|P|\geq 2^8$ then $[P,\phi]$ is isomorphic to $Q_8$.
\end{lem}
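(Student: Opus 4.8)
The plan is to exploit coprime action throughout. Since $\phi$ has odd order and acts on the $2$-group $P$, standard coprime-action results (e.g. \cite[Theorem 5.3.5 and 5.3.6]{gorenstein}) apply: $P = [P,\phi]\C_P(\phi)$, the commutator $[P,\phi]$ is $\phi$-invariant with $[[P,\phi],\phi] = [P,\phi]$, and similarly inside the $\phi$-invariant maximal subgroup $Q$ we have $Q = [Q,\phi]\C_Q(\phi)$. First I would show $[P,\phi] = [Q,\phi]$. Since $Q \normal P$ and $|P:Q| = 2$, the quotient $P/Q$ has order $2$, so $\phi$ acts trivially on it; hence $[P,\phi] \leq Q$, and therefore $[P,\phi] = [[P,\phi],\phi] \leq [Q,\phi]$. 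The reverse inclusion $[Q,\phi] \leq [P,\phi]$ is immediate since $Q \leq P$. Thus $[P,\phi] = [Q,\phi]$; call this subgroup $R$. For the factorization $P = R\,\C_P(R)$, note $R = [P,\phi]$ is normal in $P$ (as $Q$ is normal and $R$ is generated by commutators $[q,\phi]$ which behave well under the $P$-action combined with $\phi$-invariance), so $\C_P(R) \normal P$; then $P = [P,\phi]\C_P(\phi)$ and the fact that $\phi$ acts trivially on $P/R$ (because $[P,\phi] = R$) means every element of $\C_P(\phi)$ centralizes $R$ modulo... — more cleanly: since $R = [P,\phi]$, $\phi$ acts trivially on $P/R$, but also $\phi$ acts trivially on $R/[R,\phi] = R/R = 1$; the key point is that $\C_P(\phi)$ normalizes $R$ and $[\C_P(\phi), R, \phi]$-type three-subgroup arguments force $[\C_P(\phi),R]$ to be $\phi$-invariant with trivial $[\cdot,\phi]$, hence equal to its own... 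I would instead argue directly: $R = [R,\phi]$ is generated by $R$-conjugates of elements $[r,\phi]$; since $P = R\,\C_P(\phi)$ and $\C_P(\phi)$ centralizes each $[r,\phi] \in [\C_P(\phi),\phi]^{R}$... The cleanest route is: $[P,\phi,\C_P(\phi)] \leq [P,\phi]$ trivially, and by the three subgroups lemma applied to $\phi$, $P$, $\C_P(\phi)$ one gets $[\C_P(\phi), R]$ is $\phi$-invariant and satisfies $[[\C_P(\phi),R],\phi]=[\C_P(\phi),R]$; but $[\C_P(\phi),R] \leq R$ and one shows it is also centralized by $\phi$, forcing it trivial, i.e. $\C_P(\phi) \leq \C_P(R)$, whence $P = R\,\C_P(\phi) \leq R\,\C_P(R)$.

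Next, the main assertion: if $|P| \geq 2^8$ then $R = [P,\phi] \cong Q_8$. I would proceed by a minimal-counterexample / induction argument on $|P|$, using Lemma \ref{l:no57} to know $o(\phi) = 3$ and $P$ is $3$- or $4$-generator, and invoking Proposition \ref{p:smallorder} for the base cases. By the above, $R = [Q,\phi]$, and $Q$ is a $2$-group of $2$-rank at most $2$ (a subgroup of $P$) admitting the odd-order automorphism $\phi$ with $[Q,\phi] = R$. Apply the inductive hypothesis, or the full classification Theorems \ref{t:oddorderauts} and \ref{t:oddorderauts2} (which at this point in the paper one should treat as "to be proved for smaller groups"), to $Q$: for each group $Q$ on those lists, $[Q,\phi]$ is visibly the $Q_8$ direct/central factor — for $QC_{3,n}$, $Q_{3,n}$ (with the obvious order-$3$ automorphism), $\GA_n$, $\GB_n$, $Q_8\ast C_{2^n}$, $Q_8\ast D_{2^n}$, the automorphism acts trivially off a $Q_8$ and on that $Q_8$ in the standard way, so $[Q,\phi] = Q_8$; the homocyclic $C_{n,n}$, $Q_{3,3}$ with its diagonal automorphism, $Q_8\wr C_2$, $QD_{3,3}^{*}$ with $o(\phi)=5$, and $\Suz$ are excluded either because they force $|Q|$ small (so $|P| \leq 2^7$, contrary to $|P| \geq 2^8$, using $|P| = 2|Q|$), or because $o(\phi) \neq 3$, or because $[Q,\phi]$ would not have $2$-rank $\leq 2$ inside $P$. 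So for $|P| \geq 2^8$ the only surviving possibility is $[Q,\phi] \cong Q_8$, i.e. $R \cong Q_8$.

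The main obstacle I anticipate is the bookkeeping in the factorization $P = R\,\C_P(R)$ — making the three-subgroups-lemma coprime argument airtight rather than hand-wavy — and, more seriously, the logical dependency: this lemma is being used as a tool within the proof of Theorems \ref{t:oddorderauts}–\ref{t:oddorderauts2}, so the induction must be set up so that "$Q$ is on the list" is available for $|Q| < |P|$ without circularity; concretely one runs a single induction on $|P|$ proving the classification and this lemma together, with Proposition \ref{p:smallorder} as the base ($|P| \leq 2^9$) and Lemma \ref{l:no57} reducing to the tractable $3$- and $4$-generator, $o(\phi)=3$ situation. Once the inductive framework is fixed, verifying $[Q,\phi] \cong Q_8$ case-by-case against the explicit list is routine.
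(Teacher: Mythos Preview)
Your treatment of the first two assertions --- $[P,\phi]=[Q,\phi]$ via $\phi$ trivial on $P/Q$, and $P=R\,\C_P(R)$ via $P=R\,\C_P(\phi)$ together with the three-subgroup lemma --- is exactly the paper's argument. You are right that the three-subgroup step needs care; the paper simply asserts it, so your instinct to flag the bookkeeping is sound, but the route is the same.

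The genuine gap is in your argument for $R\cong Q_8$. You propose to apply the classification (by induction) to $Q$ and then read off $[Q,\phi]\cong Q_8$ case by case, claiming that the exceptional entries are ``excluded either because they force $|Q|$ small \ldots, or because $o(\phi)\neq 3$, or because $[Q,\phi]$ would not have $2$-rank $\leq 2$''. None of these reasons eliminates $Q\cong C_{n,n}$ with $n\geq 4$: such $Q$ is on the list, is arbitrarily large, has $o(\phi)=3$, and $[Q,\phi]=Q$ has $2$-rank exactly $2$. In that case $[Q,\phi]=Q\cong C_{n,n}\not\cong Q_8$, so your case analysis would output the wrong answer unless you separately rule out the possibility that $C_{n,n}$ occurs as a $\phi$-invariant maximal subgroup of $P$. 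The paper does precisely this: if $Q$ has no $\phi$-invariant maximal subgroup then (for $|Q|\geq 2^8$) Proposition~\ref{p:nophiinvmaxs} forces $Q\cong C_{n,n}$, and then one quotients by $\Omega_1(Q)$; since $\Omega_1(Q)\leq\Phi(Q)\leq\Phi(P)$ (as $n\geq 4$), an inductive step shows $\phi$ acts trivially on $P/\Omega_1(Q)$ and hence on $P/\Phi(P)$, contradicting $\phi\neq 1$.

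A secondary point: the paper's induction is on the lemma itself, not on the full classification. It uses Proposition~\ref{p:smallorder} for $|P|=2^8$, and for $|P|\geq 2^9$ either applies the lemma to $Q$ (when $Q$ has a $\phi$-invariant maximal subgroup) to get $[Q,\phi]\cong Q_8$ directly, or falls into the $C_{n,n}$ case just described. This avoids entirely the circularity you worry about, and is cleaner than invoking the whole of Theorems~\ref{t:oddorderauts}--\ref{t:oddorderauts2} for $Q$.
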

\begin{proof} Set $R = [Q, \phi]$. As $\phi$ acts trivially on $P/Q$ and $Q/R$, it acts trivially on $P/R$ and hence $[P,\phi] = R$. Moreover, by 
\cite[Theorem 5.3.5]{gorenstein}, $P=R\C_P(\phi)$, and so the three subgroup lemma implies that $[R,\C_P(\phi)] = [[Q,\phi],
C_P(\phi)]=1$. Thus, $\C_P(\phi)=\C_P(R)$, proving that $P=R\C_P(R)$, as claimed. For the last statement, note the result holds when $|P| = 2^8$ by Proposition \ref{p:smallorder}. If $|P| \geq 2^9$ and $Q$ has a $\phi$-invariant maximal subgroup, then by induction on $|P|$, $[P, \phi] = [Q,\phi] \cong Q_8$. If $Q$ does not have a $\phi$-invariant maximal subgroup, then $Q \cong C_{n,n}$ by Proposition \ref{p:nophiinvmaxs}. It follows that $Q/\Omega_1(Q) \cong C_{n-1,n-1}$ and so, by induction on $n$, $\phi$ acts trivially on $P/\Omega_1(Q)$ (since $n\geq 4$); therefore, $\phi$ acts trivially on $P$ as $\Omega_1(Q) \leq \Phi(Q) \leq \Phi(P)$ (also since $n \geq 4$).
\end{proof}

\begin{prop} Let $P$ be a finite $2$-group of $2$-rank $2$ with a nontrivial odd-order automorphism, $\phi$, and a $\phi$-invariant maximal subgroup. If $P$ is $3$-generator, then $P\cong Q_8\wr C_2$, $QC_{3,n}$ or $QC^*_{3,n}$.
\end{prop}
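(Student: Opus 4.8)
The plan is to split on the order of $P$: the groups of order at most $2^9$ will be read off from the computer calculation behind Proposition~\ref{p:smallorder}, and for $|P|\ge 2^8$ the structure of $P$ will be extracted from Lemma~\ref{l:RisQ8}. Since $P$ is $3$-generator, Lemma~\ref{l:no57} already forces $o(\phi)=3$; fix a $\phi$-invariant maximal subgroup $Q$ of $P$.

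Suppose first that $|P|\le 2^9$. As $\Aut(P)$ is not a $2$-group, Proposition~\ref{p:smallorder} places $P$ in Table~\ref{smallgroups1}, and inspecting the isomorphism types listed there shows that the $3$-generator groups among them are precisely the members of the families $QC_{3,n}$ and $QC_{3,n}^{*}$ together with $Q_8\wr C_2$: the groups $C_{n,n}$ are $2$-generator, while $Q_{3,n}$, $\GA_n$, $\GB_n$, $QD_{3,n}^{*}$ and $\Suz$ are each $4$-generator. This settles the proposition when $|P|\le 2^9$, so from now on we assume $|P|\ge 2^8$.

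Now let $|P|\ge 2^8$. By Lemma~\ref{l:RisQ8} the subgroup $R:=[P,\phi]=[Q,\phi]$ is normal in $P$ and isomorphic to $Q_8$, $P=R\,\C_P(R)$, and $\C_P(\phi)=\C_P(R)$. Put $C=\C_P(R)$, so that $\phi$ acts trivially on $C$ and $[R,C]=1$; then $R\cap C=\Z(R)=\gen z$ has order $2$ and $P=RC$ is the central product of $R$ and $C$ amalgamating $\gen z$. Using $[R,C]=1$ one gets $\Phi(P)=\Phi(R)\Phi(C)=\gen z\,\Phi(C)$, and since $|P|=|R|\cdot|C|/2$, comparing orders shows that $P$ is $(2+d)$-generator if $z\in\Phi(C)$ and $(1+d)$-generator if $z\notin\Phi(C)$, where $d$ denotes the minimal number of generators of $C$. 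As $P$ is $3$-generator, this leaves exactly two possibilities: (i) $z\in\Phi(C)$ and $C$ is cyclic; or (ii) $z\notin\Phi(C)$ and $C$ is $2$-generator.

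In case (i), $C$ is cyclic and its unique involution $z$ lies in $\Phi(C)$, which forces $|C|\ge 4$; hence $C\cong C_{2^n}$ for some $n\ge 2$ and $P\cong Q_8\ast C_{2^n}=QC_{3,n}^{*}$. In case (ii), $z$ is a central element of $C$ lying outside some maximal subgroup $M$ of $C$; since $M$ is normal of index $2$ and $z$ is central, $C=M\times\gen z$, and as $C$ is $2$-generator it follows that $M$ is cyclic, say $M\cong C_{2^m}$; the amalgamated $\gen z$ is then absorbed into $\Z(Q_8)$, so $P=RC\cong Q_8\times C_{2^m}=QC_{3,m}$. (One may also check directly that $QC_{3,n}^{*}$ and $QC_{3,m}$ have $2$-rank $2$, consistent with the hypothesis.) This exhausts the cases and completes the proof.

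The step I expect to be the genuine obstacle is obtaining $[P,\phi]\cong Q_8$: this fails for small $P$ --- most conspicuously for $Q_8\wr C_2$, where $[P,\phi]$ has order $16$ --- which is precisely why Lemma~\ref{l:RisQ8} is stated only for $|P|\ge 2^8$ and why the Magma verification behind Proposition~\ref{p:smallorder} is indispensable for the remaining orders. Once $R=[P,\phi]\cong Q_8$ is in hand, recovering $P$ is a routine matter of central-product and Frattini-quotient bookkeeping together with the $2$-rank hypothesis.
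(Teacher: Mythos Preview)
Your proof is correct and follows essentially the same route as the paper: dispose of small orders via Proposition~\ref{p:smallorder}, then for large $|P|$ invoke Lemma~\ref{l:RisQ8} to get $R=[P,\phi]\cong Q_8$ with $P=R\,\C_P(R)$, and finish by a short central-product analysis. The only cosmetic difference is that the paper phrases the last step in terms of the direct product $X=R\times\C_P(R)$ and the order-$2$ kernel $Y$, whereas you compute $\Phi(P)=\langle z\rangle\Phi(C)$ directly and split on whether $z\in\Phi(C)$; these are equivalent, and your derivation that $C=M\times\langle z\rangle$ with $M$ cyclic in case~(ii) actually makes explicit a step the paper leaves to the reader.
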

\begin{proof} By Proposition \ref{p:smallorder}, the proposition is true for $|P|\leq 2^9$. If $|P|\geq 2^{10}$, $Q$ denotes a $\phi$-invariant maximal subgroup and $R = [Q,\phi]$, then Lemma \ref{l:RisQ8} implies that $P=R\C_P(R)$ and $R \cong Q_8$. Thus, $P$ is a quotient of $X=R\times \C_P(R)$ by a (central) subgroup $Y$ of order $2$. As $P$ is $3$-generator, $X$ is either $3$-generator or $4$-generator (so that $\C_P(R)$ is either $1$-generator or $2$-generator), and if $X$ is $4$-generator then $Y\cap\Phi(X)=1$, so that $Y$ is a direct factor of $X$. In this case, $\C_P(R) \cong C_2\times C_{2^n}$, and hence $X\cong Q_8\times C_2\times C_{2^n}$, with $Y$ the direct factor of order $2$; this yields $P\cong QC_{3,n}$. If $X$ is $3$-generator, then $\C_P(R)$ is cyclic and $X\cong Q_8\times C_{2^n}$; this implies $P\cong QC^*_{3,n}$, completing the proof.
\end{proof}


\begin{prop}\label{p:4genhas3gen} Let $P$ be a finite $2$-group of $2$-rank $2$ with a nontrivial odd-order automorphism, $\phi$. If $P$ is $4$-generator and has a $3$-generator $\phi$-invariant maximal subgroup $Q$, then $Q$ has a $\phi$-invariant maximal subgroup and:
\begin{enumerate}
\item $Q \ncong Q_8\wr C_2$;
\item if $Q\cong QC_{3,n}$, then $P\cong \GA_{n+4}$, $\GB_{n+4}$ or $Q_{3,n+1}$;
\item if $Q\cong QC_{3,n}^*$, then $P\cong QD_{3,n+1}^*$.
\end{enumerate}
\end{prop}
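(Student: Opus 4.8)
The plan is to combine the structure theory already developed with a short classification argument. By Proposition~\ref{p:smallorder} we may assume $|P|\geq 2^{10}$, so $|Q|\geq 2^9$; and since $Q$ is a $\phi$-invariant maximal subgroup of $P$, Lemma~\ref{l:no57} gives $o(\phi)=3$.

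\emph{First} I would show that $Q$ has a $\phi$-invariant maximal subgroup. By \cite[Theorem~5.3.2]{gorenstein}, $\phi$ acts nontrivially on $Q$, hence nontrivially on $V:=Q/\Phi(Q)\cong\mathbb{F}_2^3$; as $3$ is coprime to $2$ we may write $V=\C_V(\phi)\oplus[V,\phi]$, and $[V,\phi]$ is a nonzero sum of copies of the $2$-dimensional irreducible $\mathbb{F}_2C_3$-module, so $\dim[V,\phi]=2$ and $[V,\phi]$ is a $\phi$-invariant hyperplane whose preimage is a $\phi$-invariant maximal subgroup of $Q$. Applying the proposition just proved (the $3$-generator case) to $Q$, equipped with $\phi|_Q$ (nontrivial of order $3$), gives $Q\cong Q_8\wr C_2$, $QC_{3,n}$ or $QC^*_{3,n}$; since $|Q_8\wr C_2|=2^7<2^9\leq|Q|$, the first is impossible, proving (1).

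\emph{Next} I would extract the central-product structure. Put $R:=[P,\phi]$. Since $|P|\geq 2^8$, Lemma~\ref{l:RisQ8} gives $R=[Q,\phi]\cong Q_8$, $P=R\,\C_P(R)$ and $\C_P(\phi)=\C_P(R)=:C$; hence $\phi$ acts trivially on $C$, $P=R\ast C$ is the central product identifying $\Z(R)$ with the involution $z:=\Z(R)\leq C$, and likewise $Q=R\,\C_Q(R)=R\ast C'$ with $C':=\C_Q(R)=Q\cap C$. From $R\leq Q$ and $RC=P$ we get $QC=P$, so $[C:C']=2$. A short computation inside $Q$ shows that $R=[Q,\phi]$ must be a direct (resp.\ central) $Q_8$-factor of $Q\cong QC_{3,n}$ (resp.\ $QC^*_{3,n}$): any other candidate subgroup of order $8$ with the right image in $Q/\Phi(Q)$ turns out to be abelian. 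Hence $C'\cong C_2\times C_{2^n}$ (resp.\ $C'\cong C_{2^n}$), with $z$ the $C_2$-factor generator in the first case.

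\emph{Finally}, it remains to determine which $2$-groups $C$ occur: those having $C'$ as an index-$2$ subgroup such that $P=Q_8\ast C$ has $2$-rank $2$ and is $4$-generator (the last condition pinning down $d(C)$ via the central product). When $Q\cong QC^*_{3,n}$, $C'$ is cyclic of order $2^n$, so $C$ lies in $\{C_{2^{n+1}},\,C_2\times C_{2^n},\,D_{2^{n+1}},\,SD_{2^{n+1}},\,Q_{2^{n+1}},\,\Modul_{2^{n+1}}\}$; the first two leave $P$ only $3$-generator, and enumerating the involutions of $Q_8\ast C$ shows the next three give $2$-rank $3$, so $C\cong D_{2^{n+1}}$ and $P\cong QD^*_{3,n+1}$. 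When $Q\cong QC_{3,n}$, the quotient $C/\langle z\rangle$ has the cyclic index-$2$ subgroup $C'/\langle z\rangle\cong C_{2^n}$; working through the corresponding central extensions of this data by $C_2$ — using, when $P$ has exactly three involutions, Theorem~\ref{t:jankoCPWmetacyclic} applied to a suitable normal $C_4\times C_4\leq P$ to force metacyclicity, and the involution count in the remaining subcase — exactly three survive the $2$-rank-$2$ constraint: $C\cong C_2\times Q_{2^{n+1}}$, whence $P\cong Q_8\times Q_{2^{n+1}}=Q_{3,n+1}$, and the two non-split central extensions with $C/\langle z\rangle\cong Q_{2^{n+1}}$, which one checks coincide with $\C_{\GA_{n+4}}(\gen{a,b})$ and $\C_{\GB_{n+4}}(\gen{a,b})$, so that $P=R\ast C\cong\GA_{n+4}$ or $\GB_{n+4}$. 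The main obstacle is this final step: the groups $\GA_n$, $\GB_n$ were designed in Section~\ref{sec:oddorderautos} precisely so that their centralizers of the obvious $Q_8$ are the two ``exotic'' admissible choices of $C$, and the real work is the (somewhat lengthy) verification, by explicit listing of involutions in $Q_8\ast C$, that no other $C$ keeps the $2$-rank equal to $2$.
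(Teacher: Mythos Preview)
Your overall architecture matches the paper's: reduce to $|P|\geq 2^{10}$ via Proposition~\ref{p:smallorder}, get $R=[P,\phi]\cong Q_8$ and $P=R\,\C_P(R)$ from Lemma~\ref{l:RisQ8}, and then classify $C=\C_P(R)$ as an index-$2$ overgroup of $C'=\C_Q(R)$. Part~(1) and the opening claim about a $\phi$-invariant maximal subgroup of $Q$ are handled the same way (the paper cites Proposition~\ref{p:nophiinvmaxs} rather than decomposing $Q/\Phi(Q)$, but this is cosmetic).

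For part~(3) your argument is the paper's, but with two bookkeeping slips. First, your ``first two leave $P$ only $3$-generator'' is wrong for $C\cong C_2\times C_{2^n}$: here the identified involution $z$ is forced to be the one in the cyclic factor $C'$, so $P\cong QC^*_{3,n}\times C_2$, which is $4$-generator but of $2$-rank $3$; the paper notes both outcomes can occur depending on the identification, but in your set-up only the $2$-rank-$3$ one does. Second, ``the next three give $2$-rank $3$'' cannot be right as written, since $D_{2^{n+1}}$ is the third entry in your list and is the one that survives; you mean the \emph{remaining} three ($SD$, $Q$, $\Modul$). Neither slip affects the conclusion.

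For part~(2) the paper takes a different and more direct route than your abstract classification of $C$. Rather than enumerating central extensions of the possible $\bar C=C/\langle z\rangle$, the paper picks $x\in C\setminus C'$, first shows concretely that $P$ has exactly three involutions (so there is no ``remaining subcase''), and then applies Theorem~\ref{t:jankoCPWmetacyclic} to the explicit subgroup $W=\langle a,c^{2^{n-2}}\rangle\cong C_4\times C_4$ to force $\C_P(W)=\C_Q(W)$ and hence that $x$ inverts $c^{2^{n-2}}$. This single constraint, together with the observation that the only two cyclic normal subgroups of $Q$ of order $2^n$ are $\langle c\rangle$ and $\langle a^2c\rangle$, reduces everything to a short finite case analysis on $c^x\in\{c^{-1},\,c^{2^{n-1}-1},\,a^2c^{-1},\,a^2c^{2^{n-1}-1}\}$ and $x^2\in\{a^2,\,a^2z,\,z\}$; the unwanted combinations are eliminated by exhibiting an involution in $P\setminus Q$. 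Your approach is correct in principle and lands on the same three $C$'s (all with $C/\langle z\rangle\cong Q_{2^{n+1}}$), but the ``somewhat lengthy verification'' you flag is exactly what the paper avoids by bringing in Janko's theorem earlier, at the level of $P$ rather than inside the classification of $C$.
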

\begin{proof} Firstly, since $Q$ is $3$-generator and $\phi$ does not act trivially on $Q$, Proposition \ref{p:nophiinvmaxs} implies that $Q$ has a $\phi$-invariant maximal subgroup. Proposition \ref{p:smallorder} implies that we may assume $|P| \geq 2^{10}$ (in particular, this shows that $Q$ cannot be $Q_8\wr C_2$, as then $|P|=2^8$); by Lemma \ref{l:RisQ8}, we have $R=[Q,\phi]$ is isomorphic with $Q_8$ and $P=R\C_P(R)$. 

Suppose that $Q\cong QC_{3,n}$, so that $\C_Q(R)\cong C_2\times C_{2^n}$, and let $x$ be an element of $\C_P(R)\setminus \C_Q(R)$.  Obviously $x^2$ acts trivially on $R$ and $x^2\in\C_Q(R)$, an abelian group, so $x^2\in \Z(Q)$. Write $R=\gen{a,b}$, with $a$ and $b$ of order $4$, and write $S=\gen c$ for the cyclic direct factor of $Q$. It is easy to see that $S$ and $\bar S=\gen{a^2c}$ are the only two normal subgroups of $Q$ that are cyclic of order $2^n$, and so either $S^x=S$ or $S^x=\bar S$. Let $z$ be the involution in $S$, and notice that $\gen{a^2,z}$ is a central, elementary abelian subgroup of $P$ (as $x$ centralizes both $a^2$ and $z$). Hence, $P$ has three involutions. Also, $S\cap \bar S=\gen{c^2}$ is characteristic in $Q$, so $x$ sends $c^2$ to some power of itself. Let $W=\gen{a,c^{2^{n-2}}}$, a normal subgroup of $Q$ isomorphic with $C_4\times C_4$. By Theorem \ref{t:jankoCPWmetacyclic}, $\C_Q(W)$ is metacyclic and $\Omega_2(\C_Q(W))=W$. Since $|Q:\C_Q(W)|=2$, and $\C_Q(W)$ is $2$-generator and $P$ is $4$-generator, $\C_P(W)=\C_Q(W)$. Hence, $x$ does not centralize $W$, but does centralize $R$; thus, $x$ inverts $c^{2^{n-2}}$. In particular, $\Z(P)=\gen{a^2,z}$, so that $x$ has order $4$.

If $S^x=S$, then $c^x=c^\alpha$, where $\alpha \in \{1,-1,2^{n-1}-1,2^{n-1}+1\}$. The two cases $\alpha=1$ and $\alpha=2^{n-1}+1$ cannot arise as $x$ inverts $c^{2^{n-2}}$. If $\alpha=2^{n-1}-1$ and $x^2$ is one of $a^2$, $a^2z$ or $z$, then $ax$, $acx$ and $cx$, respectively, are involutions in $P$ and not in $Q$, a contradiction. If $\alpha=-1$, then $x^2=z$ gives $P\cong Q_{3,n+1}$ and $x^2=a^2z$ gives $P\cong \GB_{n+4}$. If $x^2=a^2$ then $\gen{x,a,b}$ would be a group with a single involution, as $a^2z$ and $z$ lie outside it, but this is impossible, so this situation cannot arise. 

If $S^x=\bar S$, then as above, $c^x=a^2c^\alpha$, where $\alpha\in \{1,-1,2^{n-1}-1,2^{n-1}+1\}$. As before, the two cases $\alpha=1$ and $\alpha=2^{n-1}+1$ cannot arise as $x$ inverts $c^{2^{n-2}}$. If $x^2=a^2$, $a^2z$ and $z$, then $ax$, $cx$ and $acx$, respectively, is an involution not lying in $Q$, and so $P$ has more than three involutions. Thus $c^x=a^2c^{-1}$; again, $x^2\neq a^2$, and if $x^2=z$ then $P\cong \GA_{n+4}$. If $x^2=a^2z$ then $(ax)^2=z$ and we again get $\GA_{n+4}$. This completes the proof of (1).

Finally, suppose that $Q\cong QC_{3,n}^*$. Here, $\C_Q(R)$ is cyclic of order $2^n$, so that $\C_P(R)$ contains a cyclic maximal subgroup. If $\C_P(R)$ is cyclic itself then $P$ is $3$-generator, and if $\C_P(R)$ is dihedral then $P\cong QD_{3,n+1}^*$, as claimed. It remains to show that $\C_P(R)$ cannot be $C_{2^n}\times C_{2}$, semidihedral, generalized quaternion, or a modular $2$-group, and to do this, in each case we will exhibit an elementary abelian subgroup of rank $3$.

If $\C_P(R)$ contains a subgroup, $S_1$,  isomorphic to $Q_8$, then let $S$ denote the subgroup generated by the diagonal subgroup of $R \times S_1$ and the center of $R \times S_1$, a group of order 16 whose image in $R\ast \C_P(R)=P$ is elementary abelian of order $8$, and this proves that if $\C_P(R)$ is semidihedral or generalized quaternion then $P$ has $2$-rank $3$. If $\C_P(R)$ is $C_{2^n}\times C_2$, then it is easy to see that $P$ has $2$-rank $3$ or is $3$-generator (depending on whether you identify the involution in the cyclic subgroup or one outside), and if $\C_P(R)$ is a modular $2$-group then it contains a maximal subgroup isomorphic with $C_{2^{n-1}}\times C_2$, with the central involution of $\Modul_{n+1}$ being in the larger cyclic subgroup, and so the central product of this with $R$ has $2$-rank $3$ as we have stated above. Thus, all parts are proved.
\end{proof}

\begin{prop}\label{p:4gen3genphiinv} Let $P$ be a finite $2$-group of $2$-rank $2$ with a nontrivial odd-order automorphism, $\phi$. If $P$ is $4$-generator and has a $\phi$-invariant maximal subgroup, then $P$ has a $3$-generator $\phi$-invariant maximal subgroup.
\end{prop}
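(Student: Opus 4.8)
The plan is to argue by induction on $|P|$, the case $|P|\le 2^9$ being covered by Proposition~\ref{p:smallorder}; so assume $|P|\ge 2^{10}$ and fix a $\phi$-invariant maximal subgroup $Q$ of $P$. If $Q$ is already $3$-generator we are done, so assume $Q$ is $4$-generator. The first step is to put $P$ into the central-product form supplied by Lemma~\ref{l:RisQ8}: there $R:=[P,\phi]=[Q,\phi]\cong Q_8$ and $P=R\,\C_P(R)$. Writing $C:=\C_P(R)$, the group $C$ is $\phi$-invariant, and $[C,\phi]\le[P,\phi]\cap C=R\cap C\le\Z(R)\cong C_2$; as $\phi$ acts coprimely, $[C,\phi]=[[C,\phi],\phi]=1$, so $\phi$ centralises $C$. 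Since $R\cap C\le\Z(R)$ there are two possibilities. If $R\cap C=1$, then $P=R\times C$; since $Q_8$ has $2$-rank $1$ and $P$ has $2$-rank $2$, the factor $C$ has $2$-rank $1$, so $C$ is cyclic or generalised quaternion, and a cyclic $C$ would make $P$ $3$-generator, whence $C\cong Q_{2^m}$ with $m$ large, and then $R$ times the characteristic cyclic maximal subgroup of $C$ is a $3$-generator $\phi$-invariant maximal subgroup of $P$. Hence the real case is $R\cap C=\Z(R)=:\gen z$, in which $P=R\ast C$ is a central product over $\gen z\le\Z(C)$.

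In this case I would first identify the $\phi$-invariant maximal subgroups of $P$. Because the two factors of the central product centralise each other one has $(rc)^2=r^2c^2$, and a short computation then gives $\Phi(R\ast D)=\gen z\,\Phi(D)$ for every subgroup $D$ with $\gen z\le D\le C$. In particular $R\cap\Phi(P)=\gen z$, so the image $\bar R$ of $R$ in $V:=P/\Phi(P)$ is $2$-dimensional, $\phi$ acts on $\bar R$ fixed-point-freely (it has order $3$ there) and acts trivially on $V/\bar R\cong C/\Phi(C)$; by Proposition~\ref{p:4dimvec}(1) the $\phi$-invariant hyperplanes of $V$ are exactly those containing $\bar R$. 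Thus the $\phi$-invariant maximal subgroups of $P$ are precisely the three subgroups $R\ast D$, where $D$ ranges over the (exactly three) maximal subgroups of $C$ containing $z$. Reading $\Phi(R\ast D)=\gen z\,\Phi(D)$ together with $|R\ast D|=4|D|$, one finds that $R\ast D$ is $3$-generator precisely when either $D$ is cyclic, or $D$ is $2$-generator with $z\notin\Phi(D)$; and the analogous computation for $P$ itself shows that $C$ is $2$-generator or $3$-generator, being $2$-generator exactly when $z\in\Phi(C)$. So the proposition comes down to finding inside $C$ a maximal subgroup of one of these two types.

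To locate such a subgroup I would pin down the structure of $C$. Since $R\le Q$, the group $Q$ is nonabelian of order at least $2^9$, so by Proposition~\ref{p:nophiinvmaxs} it has a $\phi$-invariant maximal subgroup; the inductive hypothesis then gives $Q$ a $3$-generator $\phi$-invariant maximal subgroup, so Proposition~\ref{p:4genhas3gen}, together with the fact that $Q_8\wr C_2$ is $3$-generator (hence not a candidate for the $4$-generator group $Q$), forces $Q$ to be one of $\GA_{m+4}$, $\GB_{m+4}$, $Q_{3,m+1}$ or $QD_{3,m+1}^*$. In each of these $\C_Q(R)=C\cap Q$ is an explicit maximal subgroup of $C$, so $C$ is an index-$2$ extension of a known group; imposing that $P$ have $2$-rank $2$ and using the structure theory of $2$-groups of $2$-rank at most $2$ (\cite[Theorem 5.4.10]{gorenstein}), and, when $C$ has a normal subgroup isomorphic to $C_4\times C_4$, Janko's Theorem~\ref{t:jankoCPWmetacyclic}, one is left with a short list of possibilities for $C$. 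For each of them one then exhibits the required maximal subgroup $D$: typically the unique cyclic maximal subgroup of $C$ when $C$ has one, and otherwise a $\phi$-invariant maximal subgroup $D$ that is $2$-generator with $z\notin\Phi(D)$.

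I expect the \emph{last} step --- enumerating the index-$2$ extensions $C$ of $\C_Q(R)$ that still have $2$-rank $2$, and verifying case by case that each possesses a maximal subgroup making $R\ast D$ a $3$-generator group --- to be the main obstacle; everything before it is essentially bookkeeping about central products and about how $\phi$ acts on $P/\Phi(P)$.
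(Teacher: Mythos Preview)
Your setup is sound and genuinely different from the paper's route. Using Lemma~\ref{l:RisQ8} to write $P=R\ast C$ with $R=[P,\phi]\cong Q_8$ and $\phi$ centralising $C$, then identifying the $\phi$-invariant maximal subgroups of $P$ as exactly the three subgroups $R\ast D$ with $D$ a maximal subgroup of $C$ containing $z$, is correct (one small slip: $V/\bar R\cong C/\gen{z}\Phi(C)$, not $C/\Phi(C)$, though your conclusion that $C$ is $2$- or $3$-generator according as $z\in\Phi(C)$ or not is unaffected). Your generator count for $R\ast D$ is also right, so the problem really does reduce to producing, inside $C$, a maximal subgroup $D\ni z$ that is cyclic or $2$-generator with $z\notin\Phi(D)$.

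The paper takes a completely different, and shorter, path. It assumes for contradiction that \emph{all three} $\phi$-invariant maximal subgroups $Q_1,Q_2,Q_3$ are $4$-generator, observes that the $\Phi(Q_i)$ are pairwise distinct, and passes to the quotient $G=P/\bigl(\bigcap_i\Phi(Q_i)\bigr)$ of order $2^6$. By induction and Proposition~\ref{p:4genhas3gen}, each $Q_i$ is one of $\GA_n$, $\GB_n$, $Q_{3,n}$ or $QD_{3,n}^*$, and in each of these $Q_i'=\Phi(Q_i)$; hence the images $Q_i/X$ in $G$ are $4$-generator with abelianisation of order $16$. A Magma search over groups of order $2^6$ shows no such $G$ exists, finishing the proof.

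So the comparison is: your approach is more structural and computer-free in spirit, but you have not carried out the decisive step (the enumeration of index-$2$ extensions $C$ of $\C_Q(R)$ compatible with $2$-rank~$2$, and the location of a suitable $D$ in each), and that step is nontrivial case analysis across the four possible isomorphism types of $Q$. The paper trades that analysis for a single reduction to order~$2^6$ and a short machine check. If you want to complete your argument without a computer, note that assuming all three $Q_i$ are $4$-generator gives you \emph{all three} maximal subgroups $D_i=\C_{Q_i}(R)$ of $C$ containing $z$ simultaneously, each of known isomorphism type; exploiting this (rather than extending a single $\C_Q(R)$) should cut the casework considerably.
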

\begin{proof} By Proposition \ref{p:4dimvec}, $P$ has three  $\phi$-invariant maximal subgroups: call them $Q_1$, $Q_2$ and $Q_3$ and assume each is $4$-generator. 
\[
\xymatrix{
& P \ar@{-}[dl]_{2}  \ar@{-}[d]  \ar@{-}[dr] \ar@/_1.5pc/@{-}[ddd]_(.3){16}  & \\
Q_1  \ar@{-}[ddd]_{16}  \ar@{-}[dr] & Q_2  \ar@{-}[d]  \ar@/^1.5pc/@{-}[ddd] & Q_3  \ar@{-}[dl]  \ar@{-}[ddd] \\
& \bigcap Q_i  \ar@{-}[d] & \\
& \Phi(P)  \ar@{-}[dl]  \ar@{-}[d]  \ar@{-}[dr] & \\
\Phi(Q_1)  \ar@{-}[dr]_{2}  & \Phi(Q_2) \ar@{-}[d] &  \Phi(Q_3) \ar@{-}[dl] \\
&X= \bigcap \Phi(Q_i) &
}
\]

Firstly, the subgroups $\Phi(Q_i)$ are all different, else $Q_i/\Phi(Q_1)$ is elementary abelian of order $16$ and $P/\Phi(Q_1)$ is therefore elementary abelian of order $2^5$ (since all elements of $P/\Phi(Q_1)$ lie in one of the $Q_i/\Phi(Q_1)$, so have order $1$ or $2$), contradicting the fact that $P$ is $4$-generator. Let $X$ be the intersection of the $\Phi(Q_i)$, and let $G=P/X$. This group has the following properties.
\begin{enumerate}
\item $G$ is $4$-generator, $|G|=2^6$, and $G$ possesses an automorphism $\phi$ of order $3$.
\item The three maximal $\phi$-invariant subgroups of $G$ are each $4$-generator, and have abelianization of order $16$.
\end{enumerate}
The last statement, that $|Q_i/Q_i'|=16$, follows by the fact that each $Q_i$ has, by induction, a $3$-generator $\phi$-invariant maximal subgroup that, by Proposition \ref{p:4genhas3gen}, is one of the groups  $\GA_n$, $\GB_n$, $Q_{3,n}$ and $QD_{3,n}^*$, and examining the presentations of these groups, we see that in each case the derived subgroup and Frattini subgroup coincide.

One may use Magma to find all such groups of order $2^6$. There are two $4$-generator groups $G$ of order $2^6$ that possess an automorphism $\phi$ of order $3$ such that the three $\phi$-invariant subgroups of $G$ are each $4$-generator, given labels 192 and 194 in the SmallGroup database. The first is abelian, so in particular the $Q_i$ are abelian, and while the second is nonabelian, the $Q_i$ are all isomorphic and abelian. Hence there is no such group $G$, and thus no such group $P$ can exist.
\end{proof}

This last proposition completes the $4$-generator case, and so there is no counterexample to Theorem \ref{t:oddorderauts} or Theorem \ref{t:oddorderauts2}.

\bibliography{references}

\end{document}